\documentclass[12pt, oneside, reqno]{amsart}
\usepackage{graphicx}
\usepackage{amsfonts }
\usepackage{amsmath}
\usepackage{mathabx}
\usepackage{fullpage}
\usepackage{amssymb}
\usepackage{amsthm}
\usepackage{tikz}
\usepackage{mathtools}
\usepackage{amsfonts,amsmath,mathabx,fullpage,amssymb,amsthm,tikz,mathtools,lmodern,paralist,mathrsfs,hyperref,stmaryrd,enumitem,tikz-cd,ytableau,tcolorbox,cases, soul, accents}
\usepackage[normalem]{ulem}
\usepackage{float}
\usepackage{hyperref}

\usepackage{lmodern}
\usepackage{paralist}
\usepackage[nodisplayskipstretch]{setspace}
\setstretch{1}

\setlength{\oddsidemargin}{0in}
\setlength{\evensidemargin}{0in}
\setlength{\marginparwidth}{0in}
\setlength{\marginparsep}{0in}
\setlength{\marginparpush}{0in}
\setlength{\topmargin}{0.2in}
\setlength{\headsep}{16pt}
\setlength{\footskip}{.3in}
\setlength{\textheight}{7.5in}
\setlength{\textwidth}{6.5in}
\setlength{\parskip}{5pt}

\newtheorem{thm}{Theorem}
\newtheorem{conj}{Conjecture}
\newtheorem{lemma}{Lemma}
\newtheorem{proposition}{Proposition}
\newtheorem{defn}{Definition}
\newtheorem{cor}{Corollary}
\newtheorem{ques}{Question}

\theoremstyle{definition}
\newtheorem{rem}[thm]{Remark}
\newtheorem{ex}[thm]{Example}

\newcommand{\B}{\mathcal{B}}

\newcommand{\RR}{\mathcal{R}}
\newcommand{\I}{\mathcal{I}}

\newcommand{\Int}{\text{Int}}

\newcommand{\lex}{<_\text{lex}}

\begin{document}
\title{Dual matroid polytopes and internal activity of independence complexes. }
\author{Alexander Heaton}
\address{Max-Planck Institute for Mathematics in the Sciences and Technische Universit\"at Berlin}
\email{heaton@mis.mpg.de, alexheaton2@gmail.com}
\author{Jos\'e Alejandro Samper}
\address{Max-Planck Institute for Mathematics in the Sciences}
\email{samper@mis.mpg.de, jasamper88@gmail.com}
\date{\today}

\maketitle

\begin{abstract}
Shelling orders are a ubiquitous tool used to understand invariants of cell complexes. Significant effort has been made to develop techniques to decide when a given complex is shellable. However, empirical evidence shows that some shelling orders are better than others. In this article, we explore this phenomenon in the case of matroid independence complexes. Based on a new relation between shellability of dual matroid polytopes and independence complexes, we outline a systematic way to investigate and compare different shellings orders. We explain how our new tools recast and deepen various classical results to the language of geometry, and suggest new heuristics for addressing two old conjectures due to Simon and Stanley. Furthermore, we present freely available software which can be used to experiment with these new geometric ideas.
\end{abstract}

\section{Introduction}

Shelling orders of simplicial/polytopal complexes are a ubiquitous tool used to understand different invariants of interest in combinatorics, algebra, geometry and topology. The main idea is to construct the complex in an organized way by adding one maximal cell at a time. This allows us to easily keep track of how topological and combinatorial data change at each step. 

Many large classes of complexes are shellable and many are not. Since shellable complexes come with many attractive features, a wide variety of techniques are used to detect them. The first prominent example is the proof that boundaries of polytopes are shellable by Bruggesser and Mani \cite{BM1971}. This introduced the now well-established theory of line shellings. Many further results concerning shellability have been discovered and more are likely to come. For an excellent survey see \cite{wachs}. 

Most research along these lines has focused on how to show that a given complex is shellable. However, it is evident a complex may admit many different shelling orders. A natural and seemingly unexplored question concerns the difference between different shelling orders of the same complex. For example, in a simplicial complex a shelling order induces a partition of the faces into boolean intervals. The structure of such partitions may vary according to the shelling order used. While many numerical invariants can be computed with any shelling order, there are pieces of the refined data that change significantly depending on the chosen order. To the best of our knowledge, little has been done to understand these differences. 

In this article we address this issue for matroid independence complexes. Since the first proof of shellability by Billera and Provan \cite{ProvanBillera1980DecompositionsOfSimplicialComplexesRelatedToDiametersOfConvexPolyhedra} it has been known that matroid independence complexes admit many shelling orders. Later, Bj\"orner \cite[Section 7.3]{BjornerHomologyShellabilityofMatroids1992} provided a full characterisation of matroids in terms of orderings on the ground set and induced shelling orders. Furthermore, the decomposition of the face poset of the independence complex is governed by the theory of internal activity, dating back to the work of Tutte for graphs \cite{CrapoTuttePolynomial-1969, Tutte-contribution-chromatic-polynomials-1954}. 

Ordering the ground set allows us to select a special subset of each basis, called the internally passive set. As a set system, the collection of internally passive sets is highly structured, for instance, it is a greedoid \cite{Dawson}. As a partially ordered set, Las Vergnas \cite{LasVergnas} shows it becomes a graded lattice (after attaching an artificial maximum). Bj\"orner realized that this internal activity can be understood as data coming from a shelling order \cite[Section 7.3]{BjornerHomologyShellabilityofMatroids1992}\footnote{The comments to this Section 7.3 reference a 1979 preprint by Bj\"orner that we could not find. On the other hand, it claims the preprint is a draft of sections 7.2-7.5 of the reference given.}. However, varying the order of the ground set produces significant changes in the above-mentioned structures and changing shelling orders yields even more, as we explore in this article. Examples of these differences are elucidated by Dall \cite{DallInternallyPerfect2017}, who defines internally perfect matroids (a class of ordered matroids).

Instead of looking for all shelling orders of an independence complex, which we believe is an extremely hard task, we focus on a more manageable and structured setting. To this end, we explore a new connection with the geometry of matroid polytopes. To connect the two notions we observe that the facets of the dual polytope and the bases of the matroid are in natural correspondence. More precisely, for a matroid $M$ on ground set $E$, the vertices of the matroid polytope $P_M\subset \mathbb{R}^E$ correspond to the set $\B(M)$  of bases (facets) of the independence complex $\I(M)$ and to facets of the dual polytope $P_M^*$ at the same time. This correspondence extends to orders of these sets. For an order $<$ of $\B(M)$ we obtain an order $<_\bullet$ of the facets of $P_M^*$. 

\begin{thm}\label{thdm:shell}
Let $M$ be a matroid and let $<$ be an order of $\B(M)$. If $<_\bullet$ is a shelling order of $P_M^*$, then $<$ is a shelling order of the independence complex $\I(M)$.
\end{thm}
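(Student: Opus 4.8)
The plan is to translate a shelling of $P_M^*$ into a shelling of $\I(M)$ by relating facets of the dual polytope to bases, and ridges of $P_M^*$ to the combinatorial data that controls shellings of the independence complex. First I would recall the standard criterion for a shelling of a pure $d$-dimensional simplicial complex: an order $B_1, B_2, \dots, B_m$ of the facets (bases) is a shelling of $\I(M)$ if and only if for each $k$ the subcomplex $\langle B_k\rangle \cap \langle B_1, \dots, B_{k-1}\rangle$ is generated by a nonempty set of codimension-one faces of $B_k$ — equivalently, there is a set $R(B_k) \subseteq B_k$ (the set of "restrictions'') such that a face $F \subseteq B_k$ lies in $\langle B_1,\dots,B_{k-1}\rangle$ precisely when $F \cap R(B_k) = \emptyset$ is false, i.e. $R(B_k) \subseteq$ ... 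I would phrase this via the minimal new face: $B_k$ contributes the half-open interval $[R(B_k), B_k]$. The corresponding geometric statement is that a linear order $F_1, \dots, F_m$ on the facets of the polytope $P_M^*$ is a shelling exactly when for each $k$, the union of ridges shared by $F_k$ with earlier facets is a nonempty, proper, and \emph{connected-in-a-suitable-sense} beginning segment of a shelling of $\partial F_k$; since $P_M^*$ is a polytope and each facet $F_k$ is itself a polytope of one dimension less, this is the usual recursive definition of polytope shellability.

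The key step is a dictionary between ridges of $P_M^*$ through a fixed facet $F$ (corresponding to a basis $B$) and the codimension-one faces of the simplex on $B$ inside $\I(M)$. The facet $F$ of $P_M^*$ is dual to the vertex $e_B = \sum_{i \in B}\Ee_i$ of $P_M$; the ridges of $P_M^*$ on $F$ are dual to the edges of $P_M$ emanating from $e_B$. Edges of a matroid polytope are well understood: they are exactly the segments $[e_B, e_{B'}]$ where $B' = B \setminus i \cup j$ is a basis obtained by a single exchange, and the direction is $\Ee_j - \Ee_i$. So ridges on $F$ are indexed by pairs $(i,j)$ with $i \in B$, $j \notin B$, $B \setminus i \cup j \in \B(M)$. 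I would then check that the facet-ridge incidence structure of $\partial F$ matches, under $i \mapsto B \setminus i$, the boundary structure of the simplex on $B$: removing the element $i$ from $B$ gives a codimension-one face $B\setminus i$ of $\I(M)$, and this face is shared with the simplex on $B'$ exactly for those $B'$ reachable by exchanging $i$ out. This identifies, for each $k$, the restriction set: $\{\, i \in B_k : B_k\setminus i \in \langle B_1,\dots,B_{k-1}\rangle \,\}$ should coincide (via the edges/ridges correspondence and the hypothesis that $<_\bullet$ shells $P_M^*$) with the vertices of $F_k$ lying in the union of previously-shared ridges, hence be a nonempty proper subset giving a boolean interval.

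The main obstacle is carrying out this incidence matching honestly: one must verify that the \emph{partial} subcomplex $\langle B_1,\dots,B_{k-1}\rangle \cap \langle B_k \rangle$ in $\I(M)$ is pure of codimension one and is exactly recorded by the union of ridges of $F_k$ shared with earlier facets of $P_M^*$, rather than being, a priori, some smaller subcomplex. The subtlety is that two simplices on bases $B$ and $B'$ of $\I(M)$ can intersect in a face $B \cap B'$ of codimension larger than one, whereas in $P_M^*$ the corresponding facets $F, F'$ share at most a ridge (codimension one) — so the polytopal intersection data is coarser. I would resolve this by showing that whenever a codimension-one face $B_k \setminus i$ is already present in $\langle B_1,\dots,B_{k-1}\rangle$, it is present because some earlier $B_j$ shares the ridge corresponding to $i$ with $F_k$; this uses that $\langle B_j \rangle \cap \langle B_k\rangle \supseteq B_k \setminus i$ forces $B_j \supseteq B_k\setminus i$, and a basis containing $B_k \setminus i$ is either $B_k$ or is $B_k \setminus i \cup j$ for a unique $j$, i.e. is exactly an exchange-neighbor, i.e. corresponds to a ridge-neighbor of $F_k$. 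Conversely a shared ridge gives a shared codimension-one face. Once this equivalence is in place, the shelling condition for $P_M^*$ at step $k$ — the shared ridges form a nonempty proper connected initial part of a shelling of $\partial F_k$ — directly yields that the shared codimension-one faces of $B_k$ form a nonempty proper "beginning'' of a shelling of $\partial(\text{simplex on } B_k)$, which is all that is required for $<$ to shell $\I(M)$. (I do not expect to need the full force of "connected''; for simplex boundaries any nonempty proper subset of facets is the start of a shelling, so the nonempty-and-proper conclusion suffices.)
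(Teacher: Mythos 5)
There is a genuine gap, and it sits exactly at the point you flag as "the main obstacle" and then dismiss. The shelling condition for $\I(M)$ is not merely that the set of codimension-one faces of $B_k$ already present in $\langle B_1,\dots,B_{k-1}\rangle$ is nonempty (and proper); it is that the whole intersection $\langle B_1,\dots,B_{k-1}\rangle\cap\langle B_k\rangle$ is \emph{pure} of codimension one, i.e.\ for \emph{every} $j<k$ the face $B_j\cap B_k$ — which may have arbitrarily large codimension — must be contained in some $B_i\cap B_k$ with $i<k$ and $|B_i\cap B_k|=|B_k|-1$. Your proposed resolution only proves the dictionary between shared ridges of $P_M^*$ and shared codimension-one simplicial faces (both directions of that dictionary are fine, via the exchange/edge characterization of matroid polytopes), and your closing parenthetical ("nonempty-and-proper suffices") is precisely where the argument breaks: nonemptiness of the shared codimension-one faces does not cover the faces $B_j\cap B_k$ of higher codimension, and nothing in your outline does. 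Note that the polytopal shelling hypothesis does give you that $F_{B_j}\cap F_{B_k}$ lies inside some shared ridge $F_{B_i}\cap F_{B_k}$ with $i<k$, but this containment of polytope faces does not formally translate into the set containment $B_j\cap B_k\subseteq B_i\cap B_k$ that the simplicial shelling condition needs; that translation is the real content of the theorem. (A side inaccuracy: two facets of $P_M^*$ need not meet in "at most a ridge"; they can meet in faces of any smaller dimension.)

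The paper closes exactly this gap with a short geometric bridge that your outline is missing. Given $B'<B$, the shelling of $P_M^*$ yields $B''<B$ with $F_{B'}\cap F_B\subseteq F_{B''}\cap F_B$ a ridge. One then takes the linear functional $\ell=\chi_{B\cap B'}$ (value $1$ on $B\cap B'$, value $0$ elsewhere): it is maximized at both $\chi_B$ and $\chi_{B'}$, hence defines a face $G$ of $P_M$ containing both vertices, and the dual face of $G$ sits inside $F_{B'}\cap F_B\subseteq F_{B''}$, which forces $\chi_{B''}$ to be a vertex of $G$. Maximality then gives $|B''\cap(B\cap B')|=\ell(\chi_{B''})=|B\cap B'|$, i.e.\ $B\cap B'\subseteq B\cap B''$, and the ridge condition plus the Gelfand--Goresky--MacPherson--Serganova edge description gives $|B\cap B''|=|B|-1$. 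That is the step your proposal needs and does not supply; without it (or an equivalent argument) the purity of the intersection complex, and hence the theorem, is not established.
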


Recall that the braid arrangement $\mathcal{A}_E$ in $(\mathbb{R}^E)^*$ refines the normal fan of the matroid polytope $P_M$, and the maximal chambers of the braid arrangement correspond to total orders of $E$. Using Theorem \ref{thdm:shell} and this connection, we can relate line shellings to internal activity. To see these connections we recall that elements in $(\mathbb{R}^E)^*$ correspond naturally to real-valued functions $\ell$ on the set $E$. Therefore we abuse notation and use $\ell$ freely for either notion. For any subset $A \subset E$ we define the $\ell$-\textit{weight} $\ell(A)$ to be the sum of the values of $\ell$ on elements of $A$. By polyhedral duality, line shellings of $P_M^*$ correspond to orders of the bases $\B(M)$ by using $\ell$-weights.

\begin{thm}\label{thm:lweights-imply-shelling}
Let $M$ be a matroid on the ground set $E$ and let $\ell$ be a generic real-valued function on $E$. The following statements hold:
\begin{enumerate}
    \item[A.] The order $<_\ell$ of $\B(M)$ induced by ordering the $\ell$-weights is a shelling order of $\I(M)$.
    \item[B.] The restriction set of a basis $B$ in the shelling order $<_\ell$ is the internally passive set of $B$ with respect to the total order on $E$ induced by $\ell$.
\end{enumerate}
\end{thm}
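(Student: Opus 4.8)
The plan is to deduce part A from Theorem~\ref{thdm:shell} together with the Bruggesser--Mani theorem, and to prove part B by a direct computation with fundamental cocircuits once part A makes the notion of restriction set available. Note that Theorem~\ref{thdm:shell} transfers only shellability and not the restriction data, so part B does require a genuinely separate argument.

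For part A, the first step is to make precise the assertion, recorded above, that $<_\ell$ corresponds to a line shelling of $P_M^*$. Since $\ell$ is generic it lies in the relative interior of a maximal cone of the normal fan of $P_M$; choosing an interior point of $P_M$ as origin and passing to the affine hull, this cone -- refined by the braid-arrangement chamber containing $\ell$, equivalently by the total order $\ell$ induces on $E$ -- determines a line $L$ in the ambient space of $P_M^*$ whose Bruggesser--Mani shelling of $\partial P_M^*$, read through the bijections $\{\text{facets of }P_M^*\}\leftrightarrow\{\text{vertices of }P_M\}\leftrightarrow\B(M)$, is exactly the order $<_\bullet$ attached to $<_\ell$. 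Genericity of $\ell$ is used here in two ways: to guarantee that the $\ell$-weights $\ell(B)$, $B\in\B(M)$, are pairwise distinct (so that $<_\ell$ is a total order), and to guarantee that $L$ meets no face of $P_M^*$ of codimension at least $2$ (so that Bruggesser--Mani applies). With this in hand, part A is immediate: $<_\bullet$ shells $P_M^*$, hence by Theorem~\ref{thdm:shell} the order $<_\ell$ shells $\I(M)$.

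For part B, assume part A, so that each $B\in\B(M)$ has a well-defined restriction set $\mathcal{R}(B)$ in the shelling $<_\ell$. By the standard description of restriction sets in a shelling, $\mathcal{R}(B)=\{e\in B:\ B\setminus\{e\}\subseteq B'\ \text{for some }B'\in\B(M)\ \text{with }B'<_\ell B\}$. I would now unwind this set theoretically. If $B\setminus\{e\}\subseteq B'$ with $B'$ a basis distinct from $B$, then $B'=(B\setminus\{e\})\cup\{f\}$ for a unique $f\in E\setminus B$, and $(B\setminus\{e\})\cup\{f\}$ is a basis exactly when $f$ lies in the fundamental cocircuit $C^*(e,B)$, the unique cocircuit contained in $(E\setminus B)\cup\{e\}$; moreover $\ell(B')=\ell(B)-\ell(e)+\ell(f)$, so $B'<_\ell B$ if and only if $\ell(f)<\ell(e)$. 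Hence $e\in\mathcal{R}(B)$ if and only if $C^*(e,B)$ contains an element $f\neq e$ with $\ell(f)<\ell(e)$, i.e.\ if and only if $e$ is not the minimum of $C^*(e,B)$ for the order on $E$ induced by $\ell$ -- which is precisely the condition that $e$ be internally passive. This identifies $\mathcal{R}(B)$ with the internally passive set of $B$. (One must check that the paper's convention for internal activity -- minimality in the fundamental cocircuit -- is compatible with the direction chosen for $<_\ell$, i.e.\ increasing $\ell$-weight; reversing either convention reverses the other consistently. Alternatively, part B can be obtained from the known description of the restriction face of a facet in a line shelling, transported through Theorem~\ref{thdm:shell}, but the computation above avoids tracking that extra structure.)

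The step I expect to be the main obstacle is the bookkeeping in part A: establishing that the $\ell$-weight ordering of $\B(M)$ is realized by an honest \emph{line} shelling of $P_M^*$, rather than merely by some shelling, requires dealing with the fact that $P_M$ is not full-dimensional and does not naturally contain the origin in its relative interior, and pinning down the line $L$ so that its induced facet order is $<_\bullet$ and not a different order determined by the same $\ell$-data -- for instance a reversal, or the ``out-and-back'' rearrangement intrinsic to the Bruggesser--Mani traversal. Once part A is secured, part B is short and essentially formal, the only subtlety being the convention matching noted above.
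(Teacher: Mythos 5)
Your proposal is correct, and it splits from the paper exactly at part B. For part A you do what the paper does: identify the $\ell$-weight order with a line shelling of $P_M^*$ and invoke Theorem \ref{thdm:shell}. The bookkeeping you flag as the main obstacle is lighter than you fear: the paper simply takes as its (dual) definition that a line shelling of $P^*$ is the ordering of the vertices of $P$ by a generic linear functional, so part A is immediate from that convention; and even in the honest Bruggesser--Mani picture your worry about the ``out-and-back'' traversal dissolves, since with a base point in the relative interior the crossing times $1/\ell(\chi_B)$ ordered along the line (out to infinity and back) reproduce precisely the monotone order of the weights $\ell(\chi_B)$, so no rearrangement occurs. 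For part B your argument is genuinely different from, and more self-contained than, the paper's. The paper does not compute restriction sets directly: it orients the edges of $P_M$ by $\ell$ to get the Gale order, cites \cite{Samper} to see that this coarsens $\mathrm{Int}_<(M)$, observes that $<_\ell$ is a linear extension of it, and then quotes Theorem 4.14 of \cite{ArdCasSam}, which says that every linear extension of $\mathrm{Int}_<(M)$ shells $\I(M)$ with restriction sets equal to the internally passive sets. Your route instead uses the standard fact that in any shelling $\mathcal{R}(B)=\{e\in B:\ B\setminus\{e\}\subseteq B' \text{ for some } B'<_\ell B\}$, writes such a $B'$ as $(B\setminus\{e\})\cup\{f\}$ with $\ell(B')=\ell(B)-\ell(e)+\ell(f)$, and concludes $e\in\mathcal{R}(B)$ iff there is an admissible exchange with $\ell(f)<\ell(e)$, which is verbatim the paper's definition of $e$ being internally passive (so the convention check you worry about is automatic; the fundamental-cocircuit language is not even needed). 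What each approach buys: yours is elementary and avoids the external machinery entirely; the paper's detour through $\mathrm{Int}_<(M)$ yields strictly more, namely that the restriction set poset depends only on the braid chamber of $\ell$ and that \emph{any} linear extension of $\mathrm{Int}_<(M)$ is a shelling with these restriction sets --- facts the authors reuse later (e.g.\ in Corollary \ref{cor:extend} and Lemma \ref{lem:broken}).
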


We view these results as a geometric analog of Bj\"orner's characterization of matroid independence complexes in terms of shellings.

In Sections \ref{section:extendable shellability-heuristics-for-simons-conjecture} and \ref{section:geometric-approach-to-stanleys-pure-o-sequence-conjecture} we explore the potential of this connection to shed light on two old conjectures in the theory of matroids. The first of these is due to Simon \cite{Simon}, and the second to Stanley \cite{S1977}. Simon's conjecture is about extendable shellability of simplex skeletons and has been the subject of study in at least two recent papers \cite{Dochtermann, Benedetti}. In matroid language, this translates to a statement about the independence complex of uniform matroids. We discuss how Theorems \ref{thdm:shell} and \ref{thm:lweights-imply-shelling} together with Theorem 1.3 in \cite{ArdCasSam} explain the difficulty of this conjecture. We also discuss a related question. Since the matroid polytope of a uniform matroid is a hypersimplex, we posit the following
\begin{conj}\label{conj:hypersimplex-extendably-shellable}
The dual polytope of every hypersimplex is extendably shellable.
\end{conj}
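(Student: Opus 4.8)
Write $\Delta_{k,n}$ for the hypersimplex, i.e.\ the matroid polytope of the uniform matroid $U_{k,n}$; by the correspondence recalled before Theorem~\ref{thdm:shell}, the collection $\B(U_{k,n})$ of $k$-subsets of $[n]$ is identified with the set of facets of the dual polytope $\Delta_{k,n}^*$, and we write $F_B$ for the facet corresponding to $B$. The first step is to record the relevant combinatorics. The $2n$ vertices of $\Delta_{k,n}^*$ are the outer normals of the facets $\{x_i=0\}$ and $\{x_i=1\}$ of $\Delta_{k,n}$, and the link in $\partial\Delta_{k,n}^*$ of the vertex dual to $\{x_i=0\}$ (resp.\ $\{x_i=1\}$) is combinatorially $\partial\Delta_{k,n-1}^*$ (resp.\ $\partial\Delta_{k-1,n-1}^*$). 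Each facet $F_B$ is the vertex figure of $\Delta_{k,n}$ at $e_B$ dualized, hence combinatorially the simplicial polytope $(\Delta_{k-1}\times\Delta_{n-k-1})^*$, whose dual graph is the $k\times(n-k)$ rook's graph on $B\times([n]\setminus B)$, the facet of $F_B$ labelled by a cell $(a,b)$ being the ridge $F_B\cap F_{B-a+b}$. Unwinding the definition of a polytopal shelling, a sequence $B_1,\dots,B_m$ is a partial shelling of $\partial\Delta_{k,n}^*$ precisely when, for each $j$, the cell set $A_j=\{\,(a,b): a\in B_j,\ b\notin B_j,\ B_j-a+b\in\{B_1,\dots,B_{j-1}\}\,\}$ is nonempty and is an initial segment of a shelling of $\partial F_{B_j}$. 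Since adding a facet increases $m$, extendable shellability of $\partial\Delta_{k,n}^*$ amounts to: every partial shelling not yet using all $k$-subsets can be legally prolonged by one more.

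The plan is an induction on $n$. When $k\in\{1,n-1\}$ --- which covers all the base cases $n\le k+1$ --- the polytope $\Delta_{k,n}^*$ is a simplex, whose boundary is extendably shellable, so assume $2\le k\le n-2$. For the inductive step I would establish two lemmas. First, a \emph{facet lemma}: the dual $(\Delta_a\times\Delta_b)^*$ of every product of simplices is extendably shellable, to be proved by its own recursion; this converts the condition ``$A_j$ is an initial segment of a shelling of $\partial F_{B_j}$'' into a purely combinatorial statement about cell sets of a grid. Second, a \emph{prolongation lemma}: given a partial shelling $B_1,\dots,B_m$ not using all $k$-subsets, choose a vertex $v$ of $\partial\Delta_{k,n}^*$ whose star is not yet contained in $\langle B_1,\dots,B_m\rangle$ but whose link already meets it; restricting the partial shelling to $\link(v)$ produces a partial shelling of $\partial\Delta_{k,n-1}^*$ or $\partial\Delta_{k-1,n-1}^*$, which extends by the inductive hypothesis, and from that extension one reads off a next subset $B_{m+1}$ supported in the star of $v$, using the facet lemma to check that $A_{m+1}$ --- and every later $A_j$ --- remains an initial segment of a shelling of the relevant rook-graph polytope. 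Iterating the prolongation lemma yields a full shelling.

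The step I expect to be the genuine obstacle is the prolongation lemma, and inside it the \emph{global} coherence of this greedy choice: selecting the next facet locally in one vertex star need not keep the remainder of $\partial\Delta_{k,n}^*$ shellable, nor is it obvious that some future $A_j$ cannot fail the ``initial segment of a shelling'' condition. Taming this appears to demand a new idea about how a partially assembled subcomplex of $\Delta_{k,n}^*$ sits inside the full facial structure. The most promising way to regain control is to steer the prolongation by line shellings: by Theorem~\ref{thm:lweights-imply-shelling}, every generic weight $\ell$ on $[n]$ yields a line shelling of $\Delta_{k,n}^*$ ordered by $\ell$-weight, with restriction sets the internally passive sets, and one would like every partial shelling of $\partial\Delta_{k,n}^*$ to be continuable by following some $\ell$-weight order. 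Verbatim this is false, as polytopes admit non-line shellings, so the real content is a weaker stitching statement --- for example, that after any partial shelling there is a generic $\ell$ whose weight order agrees with the placed facets along a co-initial segment of the remaining ones --- established through the interaction between the chambers of the braid arrangement $\A_{[n]}$ and the partially built complex. I would also flag that the facet lemma is itself not routine: extendable shellability of $(\Delta_a\times\Delta_b)^*$ is transparent only in low dimension (for instance it is a shellable $2$-sphere when $a+b=3$), so a clean proof of Conjecture~\ref{conj:hypersimplex-extendably-shellable} may well have to go through it first.
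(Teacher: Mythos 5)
The statement you are addressing is Conjecture~\ref{conj:hypersimplex-extendably-shellable}, which the paper leaves open: it is posed precisely because no proof is known, and the surrounding discussion (Section~\ref{section:extendable shellability-heuristics-for-simons-conjecture}) only relates it to Simon's conjecture without resolving either. So there is no proof in the paper to compare against, and what you have written is a programme rather than a proof. Your combinatorial setup is sound --- for $2\le k\le n-2$ the dual hypersimplex has $2n$ vertices whose links are $\partial\Delta_{k,n-1}^*$ and $\partial\Delta_{k-1,n-1}^*$, each facet $F_B$ is $(\Delta_{k-1}\times\Delta_{n-k-1})^*$ with rook's-graph dual graph, and the base cases $k\in\{1,n-1\}$ are simplices --- but the two lemmas carrying the induction are exactly where the content lies, and neither is established.

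Concretely: (i) the \emph{facet lemma} cannot be waved through, since extendable shellability of polytope boundaries is false in general (there are simplicial polytopes whose boundaries are not extendably shellable), so extendable shellability of $(\Delta_a\times\Delta_b)^*$ would itself be a new theorem requiring its own argument, not a routine recursion; (ii) in the \emph{prolongation lemma}, the claim that restricting a partial shelling of $\partial\Delta_{k,n}^*$ to a vertex link yields a partial shelling of $\partial\Delta_{k,n-1}^*$ or $\partial\Delta_{k-1,n-1}^*$ is unjustified --- the shelling condition in the link involves intersections inside ridges of the big polytope and does not obviously descend --- and even granting it, extending in one link and lifting to the star gives no control over the ``initial segment of a shelling of $\partial F_{B_j}$'' condition at later, unplaced facets, which is the global coherence problem you yourself flag as the genuine obstacle. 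Your suggested repair via line shellings and the braid arrangement is only a heuristic: the paper's Theorems~\ref{thdm:shell} and~\ref{thm:lweights-imply-shelling} produce shellings and identify their restriction sets, but say nothing about continuing an \emph{arbitrary} partial shelling, which is the whole difficulty. As it stands the proposal neither proves the conjecture nor isolates a reduction that is demonstrably easier than the conjecture itself.
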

In Section \ref{section:extendable shellability-heuristics-for-simons-conjecture}, the conjectures are shown to be independent, but a resolution on either side has interesting implications for the other. Due to the geometric nature of the new conjecture, we believe it may be more tractable.

Before discussing Stanley's conjecture, we introduce some new terminology. Recall that a (generic) linear functional orders the vertices by their values. Geometrically, we can picture this ordering as a hyperplane sweeping through the matroid polytope. This process can be modified by tilting the normal vector of this hyperplane as it passes through the polytope (granted that the tilts satisfy certain conditions). In this way we obtain many new orderings that shell the dual polytope and hence the matroid independence complex. We call these orders broken-line shellings.

In Section \ref{section:pinned-broken-lines-and-generalized-internal-activity} we make these notions precise and study the restriction sets that arise from these more general shellings. The resulting objects can be thought of as mixing the restriction set posets for several different orders together, with the geometric conditions ensuring that some good combinatorial properties are preserved. Based on these properties, we focus our attention on broken line shellings of a particular type, those whose normal vectors lie in the normal cone of one fixed vertex of $P_M$. We call these \textit{pinned} broken line shellings.

Stanley's conjecture posits that the $h$-vector of a matroid is a pure $\mathcal{O}$-sequence. This means that there is a pure multi-complex whose $F$-vector is exactly the $h$-vector of the matroid. Recall that a multi-complex is a family of monomials closed under divisibility. It has received significant attention and has numerous connections to combinatorial commutative algebra, chip-firing games, matching theory, and more \cite{Merino, Ohcotransversal, zanello, Lattice, DallInternallyPerfect2017, KleeSam2015, DeLoeraKemperKlee}. In Section \ref{section:geometric-approach-to-stanleys-pure-o-sequence-conjecture}, we show how pinned broken line shellings witness an $h$-vector decomposition studied in \cite{KleeSam2015}. This decomposition is the basis for a refined conjecture presented in that article predicting some particular combinatorial data associated to the desired multi-complex.

Typically, to investigate $h$-vectors we use generic initial ideals or a linear system of parameters in a way that destroys much of the combinatorics. It would be beneficial to build up a multi-complex in a way deeply related to the original combinatorics of the matroid. In this direction, we propose that the underlying multi-complex naturally arises from a shelling order.

\begin{conj}\label{conj:friendly-brokenLine} Given a matroid $M$ there exists a pinned broken line shelling order $<$ such that the restriction set poset coincides with the divisibility poset of a pure multi-complex.
\end{conj}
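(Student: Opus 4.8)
The plan is necessarily conditional, for the following reason. A pinned broken line shelling $<$ as in the statement would, through its restriction sets, exhibit a pure multi-complex $\Gamma$ whose $F$-vector equals $h(\I(M))$ — in any shelling $h_i(\I(M))$ is the number of bases whose restriction set has size $i$, and a rank-preserving isomorphism (cardinality matching degree) between the restriction-set poset and the divisibility poset of $\Gamma$ transports this count to $\Gamma$ — so $\Gamma$ would prove Stanley's conjecture \cite{S1977} for $M$. The realistic goals are therefore: (i) to \emph{reduce} the conjecture to the refined, data-predicting form of Stanley's conjecture from \cite{KleeSam2015}, by strengthening the result of Section \ref{section:geometric-approach-to-stanleys-pure-o-sequence-conjecture} (pinned broken line shellings witness the \cite{KleeSam2015} decomposition of $h(\I(M))$) to the statement that such shellings realize the entire predicted multi-complex; and (ii) to \emph{verify} the conjecture unconditionally for classes where Stanley's conjecture is known — cographic and lattice path matroids, and matroids of small rank or corank — by exhibiting pinned broken lines that produce the known pure multi-complexes.

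Concretely I would proceed in three steps. \emph{Step 1} is a combinatorial reformulation: the conclusion says exactly that there is a bijection from the restriction sets of $<$ to a family $\Gamma$ of monomials carrying inclusion to divisibility and cardinality to degree, with $\Gamma$ closed under divisors and all its maximal elements of the same degree $s$ (the largest $s$ with $h_s(\I(M)) \neq 0$). On top of the automatic matching of the $F$- and $h$-vectors this requires two things — \textbf{closure} (the restriction-set poset is, abstractly, an order ideal of monomials) and \textbf{purity} (every restriction set extends upward to one of size $s$). \emph{Step 2} uses the geometry of pinning. Fixing the vertex of $P_M$ corresponding to a basis $B_0$, a broken line pinned there sweeps a hyperplane whose normal stays in the normal cone $N_{B_0}$, so the induced order on $\B(M)$ interpolates among the orders $<_\ell$ of Theorem \ref{thm:lweights-imply-shelling} attached to generic $\ell \in N_{B_0}$, whose restriction sets are the internally passive sets for the total orders of $E$ compatible with $N_{B_0}$. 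By the structural results of Section \ref{section:pinned-broken-lines-and-generalized-internal-activity}, the pinned restriction-set poset stays ranked by cardinality and its lower portion is glued together from these internal-activity posets; the idea is to pick $B_0$ so that the internally passive set of $B_0$ over the compatible orders has maximal size — so that $B_0$ is the apex and its restriction set a top monomial — and then to realize each summand of the \cite{KleeSam2015} decomposition as one consecutive sweep of the broken line, tilting the normal within $N_{B_0}$ so that the restriction sets of successive sweeps nest into one another. \emph{Step 3} is then essentially bookkeeping: nesting of the sweeps forces the union of restriction sets to be an order ideal under the chosen relabeling, giving closure, and purity falls out of the combinatorial prediction of \cite{KleeSam2015} once the decomposition has been matched sweep-by-sweep.

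The main obstacle is the nesting in Step 2, and specifically that it must hold \emph{globally}: the local tilting and shelling conditions across a single sweep are easy to maintain, but forcing the entire sequence of restriction sets to assemble into a genuine order ideal of monomials — as opposed to an incomparable collection of sets that merely has the right cardinalities — is exactly the phenomenon that makes pure $\OO$-sequences hard to build by hand, and it is the place where a counterexample, if any, would surface. A secondary difficulty is that $N_{B_0}$ meets only some of the chambers of the braid arrangement $\A_E$, so a single pin need not make every internal-activity pattern, hence every summand of the decomposition, available; getting around this may force one to vary $B_0$ or to interleave sweeps pinned at different vertices, all while checking that the defining condition of a pinned broken line shelling — and with it the good properties of Section \ref{section:pinned-broken-lines-and-generalized-internal-activity} — survives. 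For the special-case program the natural framework is deletion--contraction, using that $P_{M\setminus e}$ and $P_{M/e}$ are the faces $x_e=0$ and $x_e=1$ of $P_M$ (for $e$ neither a loop nor a coloop) and that pinned broken lines should restrict to pinned broken lines on these faces; here the obstruction is the familiar non-additivity of pure $\OO$-sequences, so recombining the multi-complexes of $M\setminus e$ and $M/e$ into one for $M$ will again need the geometric mechanism above.
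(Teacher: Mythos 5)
The statement you were asked about is Conjecture~\ref{conj:friendly-brokenLine}: it is an open conjecture, and the paper offers no proof of it --- only heuristic evidence (Dall's internally perfect matroids \cite{DallInternallyPerfect2017} as the line-shelling special case, the $h$-vector decomposition of \cite{KleeSam2015} witnessed via Lemma~\ref{lem:coveredAtoms}, and the computational examples of Section~\ref{section:examples}), together with the explicit caveat that the conjecture ``seems far fetched'' and is expected to be difficult. Your proposal is candid about being a program rather than a proof, and as a program it tracks the paper's own heuristics reasonably well (pin at a vertex, use the \cite{KleeSam2015} decomposition, try to realize each summand by a sweep). But it does not close the gap, and two of its load-bearing steps overstate what is available. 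First, in Step~2 you assert that the pinned restriction-set poset ``stays ranked by cardinality and its lower portion is glued together from these internal-activity posets.'' The paper establishes only Lemma~\ref{lem:coveredAtoms} (the atom structure); gradedness is \emph{not} automatic for (pinned) broken line shellings --- the paper explicitly hedges (``If $\Int_<(M)$ happens to be ranked\dots'') and poses the preservation of gradedness and of the greedoid/lattice properties of Theorem~\ref{thm:Dawson-Chari} as open questions, and Section~\ref{section:pinned-broken-lines-and-generalized-internal-activity} exhibits a broken line shelling already violating Dawson's property. Second, Step~3 is circular: you propose to get purity ``from the combinatorial prediction of \cite{KleeSam2015},'' but Conjecture~3.10 of that paper is itself an open refinement of Stanley's conjecture, so goal (i) of your plan is a reduction of one open conjecture to another, not a proof.

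The place where a genuine new idea is needed is exactly the one you flag as the ``main obstacle'': forcing the restriction sets of successive sweeps to assemble into an order ideal of monomials (closure) with all maximal elements in the same degree (purity). Nothing in Lemma~\ref{lem:broken}, Theorem~\ref{thm:lweights-imply-shelling}, or Lemma~\ref{lem:coveredAtoms} controls how restriction sets from different witnesses interact beyond the atom level, so the ``nesting'' you invoke is an unproven hypothesis doing all the work; the examples in Section~\ref{section:examples} show that pivoting can just as easily destroy the desired structure as create it. Your proposed special-case program (deletion--contraction on the faces $x_e=0$, $x_e=1$ of $P_M$, or re-deriving known classes) is a sensible direction and is in the spirit of the paper's closing questions, but it is future work, not a verification of the statement. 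In short: there is no proof here to compare against the paper's, because neither you nor the paper proves the statement; your write-up should be presented as a strategy with the gradedness/closure/purity steps clearly labeled as open, not as a proof proposal.
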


We notice that Dall's proof for internally perfect matroids is a special case of this conjecture. In our new language, he identifies a broad class of matroids for which a line shelling solves Stanley's conjecture.

In Section \ref{section:implementation-SAGE}, we discuss some experimental and freely available software written in \texttt{SAGE}. This software can be used to generate many different pinned broken line shellings for any matroid. In addition, the software automatically constructs the relevant restriction set poset, displaying it graphically along with information about the geometry used to produce it. As explained earlier, we pivot the normal vector of a hyperplane as it sweeps through the matroid polytope, as illustrated in Figure \ref{figure:uniform-matroid-hyperplane-sweep}.
\begin{figure}[!htb]
    \centering
    \includegraphics[width=0.3\textwidth]{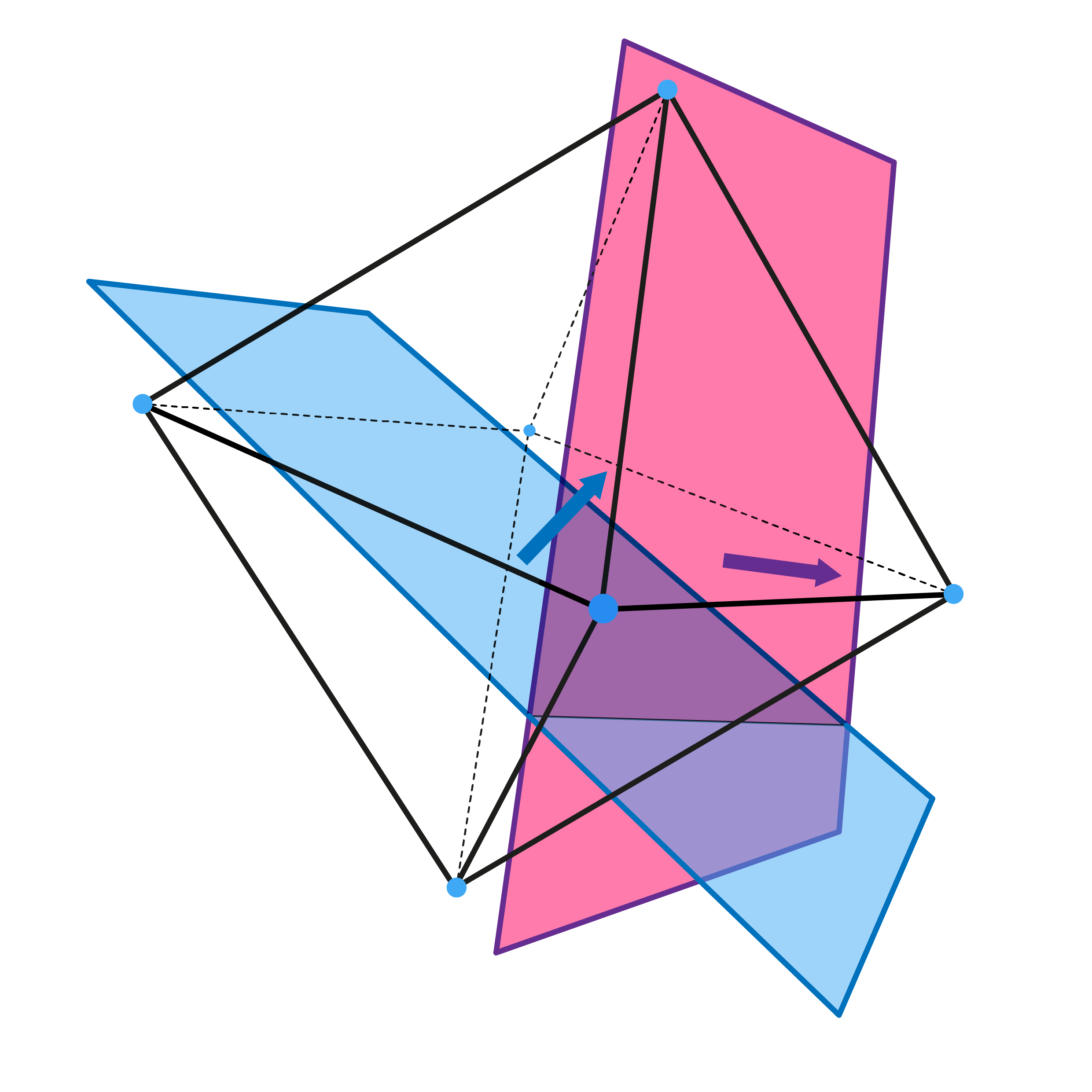}
    \caption{A hyperplane sweeps through the U(4,2) matroid polytope, a hypersimplex}
    \label{figure:uniform-matroid-hyperplane-sweep}
\end{figure}

In Section \ref{section:examples} we see cases where, by using broken line shellings, it is possible to construct a pure multi-complex witnessing Stanely's conjecture, where a simple line shelling does not suffice.

Finally, in Section \ref{section:questions}, we discuss open questions and future directions of research that are suggested by these new methods.

\section{Preliminaries}\label{section:preliminaries}

Most of the terminology is taken from Stanley's \textit{Combinatorics and Commutative Algebra} \cite{StanleyTextCombAndCommAlg1996}, Oxley's \textit{Matroid Theory} \cite{OxleyTextMatroidTheory2011} and Bj\"orner's \textit{The homology and shellability of matroids and geometric lattices} \cite{BjornerHomologyShellabilityofMatroids1992}.

Let $E$ be a finite set. A \emph{simplicial complex} $\Delta$ is a non-empty subset of $2^E$ such that if $F\in \Delta$, then all the subsets of $F$ are also elements of $\Delta$. Elements of $\Delta$ are called \emph{faces}. The \emph{dimension} of a face is one less than its number of elements and the the \emph{dimension} of $\Delta$ is the largest dimension of a face of $\Delta$. A \emph{facet} of $\Delta$ is a face that is not contained in a larger face. We say that a complex $\Delta$ is \emph{pure} if all the facets have the same dimension. 

The $f$-vector of a simplicial complex $\Delta$ is the vector $f(\Delta) = (f_{-1}, f_0, \dots, f_{\dim(\Delta)})$ where $f_i$ is the number of $i$-dimensional faces of $\Delta$. The $h$-vector of $\Delta$ is the integral vector $h(\Delta)= (h_0, h_1, \dots, h_{1+\dim(\Delta)})$ satisfying the equation: 
\[\sum_{j=0}^{\dim(\Delta)+1} h_jx^j = \sum_{j=0}^{\dim(\Delta)+1} f_{j-1}x^j(1-x)^{\dim(\Delta)+1-j}\]

For a complex $\Delta$ let $\mathcal{F}$ denote the set of facets of $\Delta$. If $\mathcal{G}\subseteq \mathcal{F}$, let $\langle \mathcal{G} \rangle$ denote the simplicial complex whose facets are the elements of $\mathcal{G}$. A \emph{shelling order} of a pure simplicial complex $\Delta$ is a total order $F_1<F_2<\dots < F_k$ of $\mathcal{F}$, such that for each $j=2,\dots k$, the intersection of complexes $\langle F_1,\dots, F_{j-1}\rangle\cap \langle F_j\rangle$ is a pure complex of dimension one less than $\dim(\Delta)$. If $F_1<\dots<F_k$ is a shelling order, then for each $j=1,\dots, k$ there is a subset $\mathcal{R}(F_j)$ of $F_j$ such that the faces of $\langle F_1, \dots, F_j\rangle$ not in  $\langle F_1, \dots, F_{j-1}\rangle$ are precisely the subsets of $F_j$ containing $\mathcal{R}(F_j)$. The $h$-vector of the complex becomes a shellability invariant, namely:
\[\sum_{j=0}^{\dim(\Delta)+1} h_jx^j = \sum_{j=1}^k x^{|\mathcal{R}(F_j)|}\]

A matroid is a pair $M=(E, \I)$ where $E$ is a finite set and $\I\subset 2^E$ is a simplicial complex whose faces we dub independent sets. We require that they satisfy the following axiom: if $I,J\in \I$ and $|I|>|J|$, there is $i\in I\backslash J$ such that $J\cup\{i\} \in \I$. The facets of $\I $ are called \emph{bases} and denoted by $\B$. All bases have the same size $r$, the \emph{rank} of the matroid. The complex $\I$ of a matroid admits many shelling orders. If $<$ denotes any total order on $E$, then the induced lexicographic order on $\B$ is a shelling order \textcolor{black}{(each $B \in \B$ corresponds to a square-free monomial)}. This is in fact a defining property for matroids \cite[Section 7.3]{BjornerHomologyShellabilityofMatroids1992}. In this setting, the restriction sets of the shelling order are equivalent to internally passive sets dating back to Tutte \cite{Tutte-contribution-chromatic-polynomials-1954} and Crapo \cite{CrapoTuttePolynomial-1969}. For a total order $<$ on $E$, the internally passive set $IP_<(B)$ of a basis of $M$ is the set of elements $b\in B$ for which one can find $b'<b$ not in $B$ such that $(B\backslash\{b\})\cup\{b'\} \in \B$. \textcolor{black}{In other words, the set $IP_<(B)$ consists of the elements of $B$ that are replaceable with something smaller.} In the lexicographic shelling one has that $\mathcal{R}(B) = IP_<(B)$. We denote by $\Int_<(M)$ the poset whose elements are the bases of $M$ ordered by containment of internally passive sets. 

For a finite set $E$, let $\mathbb{R}^E$ denote the $\mathbb{R}$-vector space of functions from $E$ to $\mathbb{R}$. In coordinates, this vector space corresponds to tuples $(r_e)_{e\in E}$ of real numbers indexed by $E$. For a subset $S \subset B$ let $\chi_S\in \mathbb{R}^E$ be the characteristic function of $S$, namely the $e$-coordinate of $\chi_S$ is equal to one if $e\in S$ and zero otherwise. 

A polytope $P$ in $\mathbb{R}^E$ is the convex hull of finitely many points. For a linear functional $\ell$ on $\mathbb{R}^E$ and a real value $t$ we say that $(\ell,t)$ bounds $P$ if $\ell(x)\le t$ for all $x\in P$. Compactness guarantees $(\ell, t)$ bounds $P$ for large enough $t$. The pair $(\ell, t)$ is said to support $P$ if it bounds $P$ and $(\ell, t')$ does not bound $P$ for any other any $t'<t$. A face of $P$ is a subset $F$ of $P$ such that there is a supporting pair $(\ell,t)$ such that $F=\{x\in P \, | \, \ell(x) = t\}$. Any face of a polytope is again a polytope. The dimension of a polytope is the dimension of the smallest affine space containing $P$. A facet of $P$ is a face of dimension one less than $P$. A polytope has finitely many faces that fit into a poset, where the order is by inclusion. 

The normal cone $N(F,P)$ of a face $F$ of a polytope $P \subset \mathbb{R}^E$ is the closure in $(\mathbb{R}^E)^*$ of the set of linear functionals $\ell$ for which the unique supporting pair $(\ell, t)$ defines $F$. For faces $G,F$ of $P$ the containment $G\subset F$ is equivalent to $N(F,P) \subset N(G,P)$. Thus the normal cones ordered by inclusion define the opposite of the face poset of $P$. This opposite face poset turns out to be the face poset of another polytope which we call the dual polytope. In this article, we need the existence of the dual polytope together with its combinatorial structure, so we abuse notation and ignore the coordinates. 

A shelling order of a polygon is an ordering of the edges such that each edge that is not the first shares a vertex with a previous edge. A shelling of a polytope $P$ of dimension at least $3$ is an order $F_1,..., F_k$ of the facets, such that for $j\ge 2$, $F_j\cap \left(\bigcup_{i=1}^{j-1}F_i\right)$ is a union of facets of $F_j$ that is an initial segment of a shelling order (see \cite[Ch. 8]{ZieglerP} for more details). It is well-known that shelling orders are combinatorial invariants: two polytopes with isomorphic face lattices have the same shelling orders. Bruggesser and Mani \cite{BM1971} showed that every polytope is shellable by using line shellings. For convenience, we introduce them here in their dual setting. The facets of the dual polytope $P^*$ correspond to vertices of $P$. A line shelling of $P^*$ is obtained by ordering the vertices according to their value under a generic linear functional.

Let $M=(E,\I)$ be a matroid and let $P_M=\text{conv}\{\chi_B \, | \, B\in \B\}\subset\mathbb{R}^E$. We call $P_M$ the matroid polytope of $M$. Matroid polytopes were characterized by Gelfand, Goresky, MacPherson and Serganova \cite{GelfandGoreskyMacPhersonSerganova1987CombinatorialGeometriesConvexPolyhedraSchubertCells} in terms of their edges: they are the polytopes whose vertices are characteristic vectors and whose edges are parallel to vectors of the form $\chi_e-\chi_{e'}$ for some $e, e'\in E$. The set of edges (one dimensional faces) of the polytope $P_M$ defines a graph $G_\B$ whose vertices are bases where an edge is formed if they differ by one element. The graph $G_\B$ is often referred to as the dual graph of the independence complex.

Matroid polytopes can be equivalently characterized in terms of the normal cones of their faces. To explain this we need ordered partitions. An ordered partition $\pi :=(E_1\prec\dots \prec E_k)$ of $E$ is a collection of disjoint sets whose union is $E$. The relatively open cell of the braid cone in $(\mathbb{R}^E)^*$ associated to the ordered partition $\pi:=(E_1\prec\dots \prec E_k)$ is the set of all linear functionals $\ell$ such that $\ell(e)=\ell(e')$ if $e, e' \in E_i$ and $\ell(e) < \ell(e')$ whenever $e\in E_i$, $e'\in E_j$ and $E_i\prec E_j$. In particular, full dimensional cells of the braid cone are in one to one correspondence with linear orders on $E$. The braid arrangement $\mathcal{A}_E$ is the collection of the closures of these cells.

A polytope $P\subset \mathbb{R}^E$ is a matroid polytope if and only if all its vertices are characteristic vectors and every normal cone is a union of cones of the braid arrangement $\mathcal{A}_E$. In particular, the braid cone associated to an ordering of $E$ is entirely contained in the normal cone of a vertex.

\section{Shelling orders of dual matroid polytopes and independence complexes}\label{section:shelling-orders-dual-matroid-polytopes-and-independence-complexes}

In this section we prove Theorem \ref{thdm:shell} which establishes a relationship between shelling orders of the dual matroid polytope $P_M^*$ and the independence complex $\I(M)$. We also explore a few of its basic consequences. 

For a basis $B$ let $F_B$ denote the facet of $P_M^*$ associated to $B$ under the natural correspondence explained above. Recall for a linear order $<$ on $\B$ let $<_\bullet$ be the corresponding order of the facets of $P_M^*$.

\begin{proof}[Proof of Theorem \ref{thdm:shell}]
Assume that $<$ is an order on $\B(M)$ such that $<_\bullet$ is a shelling order of $P_M^*$. Let $B'<B$ be two bases. We need to show that $B\cap B'$ is contained in a facet of the intersection of $\langle B \rangle \cap \langle D, D<B\rangle$ whose size is $|B| -1$. Since $<_\bullet$ is a shelling order of $P_M^*$, there is a basis $B''<B$ such that $F_{B''}\cap F_B$ is a codimension one face of $F_B$, that is a facet of $\left(\bigcup_{D< B}F_D\right)\cap F_B$ containing $F_{B'}\cap F_{B}$. Consider the linear functional $\ell\in(\mathbb{R}^E)^*$ such that $\ell(e) =1$ if $e \in B \cap B'$ and $\ell(e) = 0$ otherwise. This functional $\ell$ is maximized by the vertices $\chi_B$ and $\chi_{B'}$ of $P_M$, and therefore defines a face $G$ of $P_M$ containing both vertices. The dual of this face $\hat G$ in $P_M^*$ is therefore contained in the intersection of facets $F_{B'}\cap F_B \subseteq F_{B''}\cap F_B \subseteq F_{B''}$. Therefore $\chi_{B''}$ is also vertex of $G$. Now notice that $|B\cap B'|= \ell(B) = \ell(B'') = |B''\cap(B\cap B')|$. Hence $B' \cap B \subseteq B''\cap B$. Since $F_B$ and $F_{B''}$ are connected in codimension $1$, $\chi_B$ and $\chi_{B''}$ are connected by an edge of $P_M$ and therefore $|B\cap B''|= |B|-1$ as desired. 
\end{proof}

This theorem recovers some classical theorems in the theory of matroids.
\begin{enumerate}
    \item Matroid independence complexes are shellable \cite{ProvanBillera1980DecompositionsOfSimplicialComplexesRelatedToDiametersOfConvexPolyhedra}.
    \item The lexicographic ordering of the bases always produces a shelling order \cite{BjornerHomologyShellabilityofMatroids1992} (see Section \ref{section:line-shellings-and-internal-activity}).
    \item Shelling orders of matroids are sometimes reversible \cite{Chari1996ReversibleShellingsAndInequalityForHVectors} (see Proposition \ref{prop:nonRev}).
\end{enumerate}
Our interest in this theorem is that it provides a large class of shelling orders of $\I(M)$ coming from geometry, which we may understand better.

\begin{defn}
A shelling order $<$ of the independence complex of a matroid $M$ is called geometric if $<_\bullet$ is a shelling order of $P_M^*$.
\end{defn}

We can try to obtain similar results for other types of simplicial complexes. However, the polytopes would need to be quite special. Given a simplicial complex or a polytope (or a pure polytopal complex) $\Gamma$ we define two graphs $G_{\Gamma}$ and $H_\Gamma$ in the vertex set $\mathcal{F}$, the facets of $\Gamma$. In $G_\Gamma$ two facets are connected if they intersect in codimension $1$ and in $H_\Gamma$ two facets are connected if they intersect in a nonempty face. These are called the dual graph and the intersection graph, respectively. A similar argument to that of Theorem \ref{thdm:shell} can be used to show the following, where the graphs replace the duality arguments.

\begin{thm}\label{thm:veryGeneral}
Let $\Delta$ be a pure simplicial complex and let $\Gamma$ be a pure polytopal complex (or polytope). Assume that there is a bijection $\varphi: \mathcal{F}(\Delta) \to \mathcal{F}(\Gamma)$ that induces a bijection $G_\Delta \to G_\Gamma$ and an embedding $H_\Delta \to H_\Gamma$. Then the preimage under $\varphi$ of any shelling order of $\Gamma$ is a shelling of $\Delta$.
\end{thm}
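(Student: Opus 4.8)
The plan is to mimic the proof of Theorem \ref{thdm:shell} almost verbatim, but with the combinatorial graph data $\varphi$ playing the role that polytope duality played before. Suppose $<$ is the order on $\mathcal{F}(\Delta)$ obtained by pulling back a shelling order $\varphi(F_1), \varphi(F_2), \dots, \varphi(F_k)$ of $\Gamma$. Fix $j \ge 2$ and write $F = F_j$, and suppose $F' = F_i$ with $i < j$. We must show that $F \cap F'$ is contained in a codimension-one face $G$ of $F$ that is itself a facet of $\langle F_1, \dots, F_{j-1}\rangle \cap \langle F_j \rangle$. Since $\varphi(F_1), \dots, \varphi(F_k)$ is a shelling of $\Gamma$, the intersection $\varphi(F_j) \cap \big(\bigcup_{i<j}\varphi(F_i)\big)$ is an initial segment of a shelling of $\partial \varphi(F_j)$; in particular it is a union of facets of $\varphi(F_j)$, and there is some $i'' < j$ with $\varphi(F_{i''}) \cap \varphi(F_j)$ a facet of $\varphi(F_j)$ containing $\varphi(F_i)\cap\varphi(F_j)$.

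The key translation step is to move this back to $\Delta$. Because $\varphi$ restricts to an isomorphism $G_\Delta \to G_\Gamma$, two facets of $\Gamma$ meet in codimension one exactly when their $\varphi$-preimages meet in codimension one in $\Delta$; so the relation ``$\varphi(F_{i''})\cap\varphi(F_j)$ has codimension one in $\varphi(F_j)$'' gives us that $F_{i''} \cap F_j$ has codimension one in $F_j$, i.e. $|F_{i''} \cap F_j| = \dim(\Delta)$. Set $G := F_{i''} \cap F_j$; this is a candidate codimension-one face of $F_j$. It remains to check (i) that $F_i \cap F_j \subseteq G$, and (ii) that $G$ really is a facet of $\langle F_1, \dots, F_{j-1}\rangle \cap \langle F_j\rangle$ and that every facet of this intersection complex arises this way — equivalently, that the intersection is pure of dimension $\dim(\Delta)-1$.

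For (i) I expect to use the embedding $H_\Delta \hookrightarrow H_\Gamma$ in the direction that controls containments of intersections. The subtlety, and where I expect the main obstacle, is exactly this point: in the polytope proof of Theorem \ref{thdm:shell} one had the extra geometric fact that $F_{B'}\cap F_B \subseteq F_{B''}\cap F_B$ followed from duality together with the linear-functional argument pinning down $|B \cap B''|$. Here we have only the abstract data of the two graphs, so to replicate the inclusion $F_i\cap F_j \subseteq F_{i''}\cap F_j$ we need the $H$-embedding to be compatible with the poset of face-intersections in the right way. Concretely, one wants: if $\varphi(F_i)\cap\varphi(F_j) \subseteq \varphi(F_{i''})$ then $F_i\cap F_j \subseteq F_{i''}$. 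This is the ``honest'' content of asking $\varphi$ to induce an \emph{embedding} $H_\Delta \to H_\Gamma$ that is well-behaved on intersections (not merely on adjacency); making precise exactly what property of $\varphi$ is needed, and verifying it is implied by the hypotheses as literally stated, is the delicate part — it may require strengthening ``embedding of $H_\Delta\to H_\Gamma$'' to ``$\varphi$ and $\varphi^{-1}$ preserve the partial order on nonempty intersections of facets,'' which is presumably the intended reading.

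Once (i) is in hand, (ii) should follow by a standard argument: the faces of $\langle F_1,\dots,F_j\rangle$ not already in $\langle F_1,\dots,F_{j-1}\rangle$ are the faces of $F_j$ meeting no earlier facet, and since each $F_i\cap F_j$ ($i<j$) is contained in one of the codimension-one faces $G$ obtained above, the intersection complex $\langle F_1,\dots,F_{j-1}\rangle\cap\langle F_j\rangle$ is exactly the union of these $G$'s, which is pure of dimension $\dim(\Delta)-1$ because each $G$ has that dimension and the union is nonempty for $j\ge 2$ (as $\varphi(F_j)$ meets an earlier $\varphi(F_i)$ in the shelling of $\Gamma$, hence $F_j$ meets $F_i$ via the $H$-embedding). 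This is precisely the definition of a shelling of the pure simplicial complex $\Delta$, completing the proof. I would present the write-up by first isolating the exact intersection-compatibility property of $\varphi$ as a small observation, then running the two-case argument above.
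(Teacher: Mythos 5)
Your strategy is exactly the one the paper intends: the paper gives no detailed proof of this theorem, only the remark that the argument of Theorem \ref{thdm:shell} goes through ``with the graphs replacing the duality arguments,'' and your reduction is the right one --- from the polytopal shelling of $\Gamma$ extract $i''<j$ with $\varphi(F_{i''})\cap\varphi(F_j)$ a codimension-one face of $\varphi(F_j)$ containing $\varphi(F_i)\cap\varphi(F_j)$, and use the dual-graph bijection to convert ``codimension one in $\Gamma$'' into $|F_{i''}\cap F_j|=|F_j|-1$. One directional slip: at the end you argue nonemptiness of the intersection complex by saying that $\varphi(F_j)$ meets an earlier $\varphi(F_i)$, ``hence $F_j$ meets $F_i$ via the $H$-embedding.'' The embedding $H_\Delta\to H_\Gamma$ only transfers nonempty intersections from $\Delta$ to $\Gamma$, not back; the correct fix is to note that $\varphi(F_j)$ meets some earlier facet in codimension one and to pull that back through the $G$-bijection, which, being an isomorphism, works in both directions.

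The real issue is your step (i), and you have named it correctly: nothing in the hypotheses as stated transfers the containment $\varphi(F_i)\cap\varphi(F_j)\subseteq\varphi(F_{i''})$ back to $F_i\cap F_j\subseteq F_{i''}$, since the $H$-embedding records only nonemptiness (and only in one direction) and the $G$-bijection only adjacency. This is precisely the point where the proof of Theorem \ref{thdm:shell} invokes the $0/1$ functional and polytope duality, i.e.\ strictly more than pairwise facet-intersection data, so ``graphs replace duality'' is not automatic, and the paper supplies nothing further. It is worth recording that a purely combinatorial counting argument closes the case in which $F_i$ is codimension-one adjacent to the chosen $F_{i''}$: writing $F_{i''}\cap F_j=F_j\setminus\{v\}$ and assuming $v\in F_i$, the sets $F_i\cap F_{i''}$ and $F_j\cap F_{i''}$ are two $(|F_j|-1)$-subsets of $F_{i''}$, so $|F_i\cap F_j\cap F_{i''}|\ge |F_j|-2$, and adding $v$ gives $|F_i\cap F_j|\ge |F_j|-1$; then $F_i$ is itself an earlier codimension-one neighbour of $F_j$ and one may take $i''=i$. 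But when $F_i$ is not adjacent to any earlier codimension-one neighbour of $F_j$, neither your write-up nor the paper's one-line sketch supplies the missing step; your proposal to read the $H$-condition as compatibility of $\varphi$ with containments of facet intersections is a plausible repair, but as written the proof is conditional and the gap is genuine.
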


Suppose  that $\Delta$ is any complex and $\Gamma$ is the boundary of the dual polytope to the convex hull of characteristic vectors of the facets of $\Delta$: then Theorem~\ref{thm:veryGeneral} applies if and only if $\Delta$ is the independence complex of a matroid. The condition on the graph is equivalent to the matroid cryptomorphism in terms of polytopes and edge directions. This however, does not rule out finding similar techniques for other complexes by using a suitable polytope or polytopal complex. 

One may ask if the converse is true, that is whether any shelling order of $\B(M)$ is a shelling order of $P_M^*$. Intuitively this should be false: the intersection conditions for shelling orders at the level of simplicial complexes are far less restrictive than their polyhedral counterparts. To confirm our intuition, recall that a shelling order is called \emph{reversible} if the opposite order is also a shelling order. 

\begin{proposition}\label{prop:nonRev}
There exists a matroid $M$ and a shelling order $<$ of $\I(M)$ such that $<_\bullet$ is not a shelling order of $P_M^*$.
\end{proposition}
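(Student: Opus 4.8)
The plan is to exhibit a small explicit matroid $M$ together with a shelling order $<$ of $\I(M)$ whose reverse is \emph{not} a shelling order of $\I(M)$; combined with the fact (mentioned in the excerpt, from Chari) that shellings of polytopes are always reversible, this forces $<_\bullet$ to fail to be a shelling order of $P_M^*$, since a shelling of $P_M^*$ would have a reverse shelling of $P_M^*$, which by Theorem \ref{thdm:shell} would give a reverse shelling of $\I(M)$, a contradiction. So the entire task reduces to finding a matroid independence complex admitting a non-reversible shelling order.

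First I would look for the smallest candidates. Uniform matroids $U_{r,n}$ have too much symmetry: every lexicographic order and its reverse tend to both shell, so these are unlikely to work. Instead I would try a matroid whose independence complex has an asymmetric facet-intersection structure — for instance a graphic matroid of a small graph, or a direct sum / parallel connection that breaks symmetry. A natural first attempt is the rank-$3$ matroid on $5$ or $6$ elements with a few dependent triples (e.g.\ the graphic matroid of $K_4$, which has rank $3$ on $6$ elements and $16$ bases, or a simpler rank-$2$ or rank-$3$ matroid with one or two circuits). Concretely, I would start with a rank-$2$ matroid on a small ground set with one non-trivial parallel class, write down all bases, and search by hand (or cite the companion software from Section \ref{section:implementation-SAGE}) for a shelling order of $\I(M)$ whose reverse violates the shelling condition at some step — i.e.\ where for some basis $B$, the intersection $\langle B\rangle \cap \langle D : D \text{ after } B\text{ in the reversed order}\rangle$ fails to be pure of codimension $1$.

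The verification has two parts, both routine once the example is fixed: (i) check that $<$ is a shelling of $\I(M)$, by exhibiting for each basis $B_j$ its restriction set $\mathcal{R}(B_j)\subseteq B_j$ and confirming the boolean-interval partition (equivalently, checking that $\langle B_1,\dots,B_{j-1}\rangle \cap \langle B_j\rangle$ is pure of dimension $r-2$ for all $j$); and (ii) exhibit a single step in the reversed order where purity fails — this is the place where the asymmetry is essential, so I would design the example precisely so that the ``last'' basis in $<$ has a large internally passive set while behaving badly when read first. I expect part (ii) to be the main obstacle: one must choose $M$ carefully so that the combinatorial shellability of $\I(M)$ genuinely outstrips the reversibility that polytope shellings enjoy, and finding the minimal such $M$ may require a short computer search rather than a slick argument. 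Once such an $M$ and $<$ are in hand, the logical deduction via Theorem \ref{thdm:shell} and Chari's reversibility theorem is immediate and completes the proof.
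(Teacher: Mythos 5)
Your reduction is the same as the paper's: invoke the fact that every shelling of a polytope boundary is reversible (this is Lemma 8.10 in Ziegler's book, by the way, not Chari's result --- Chari's reversibility statement is about matroid complexes), so it suffices to produce a matroid $M$ and a \emph{non-reversible} shelling order of $\I(M)$; if $<_\bullet$ were a shelling of $P_M^*$, its reverse would also be one, and Theorem~\ref{thdm:shell} would make the reverse of $<$ a shelling of $\I(M)$, a contradiction. That logical frame is correct and is exactly how the paper argues.

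The genuine gap is that you never actually produce the non-reversible shelling: you defer its existence to a hand/computer search, and that existence is the entire content of the proposition once the reduction is made. The paper closes this with a short uniform construction, no search needed: take any matroid with three pairwise disjoint bases $B, B', B''$ (e.g.\ a uniform matroid $U_{n,k}$ with $n\ge 3k$ --- note this contradicts your heuristic that uniform matroids are ``unlikely to work''; what fails there is only the purely lexicographic orders, which are indeed reversible), order the ground set so that $B$ is lexicographically smallest. Then by Lemma 3.2 of the cited Klee--Samper paper, $B'$ and $B''$ are homology facets of the lexicographic shelling, i.e.\ their restriction sets are all of $B'$ and $B''$; hence they may be moved to the very end of the order and the result $\hat<$ is still a shelling. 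Its reversal begins with the two disjoint facets $B''$, $B'$, so at the second step the intersection $\langle B''\rangle\cap\langle B'\rangle$ is empty rather than pure of codimension one, and $\hat<$ is not reversible. Without this (or some explicitly verified example --- and note your candidate, the graphic matroid of $K_4$, does not even contain three disjoint bases, so the slick trick would not apply there), your proposal is an outline of the right strategy rather than a proof.
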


\begin{proof}
Lemma 8.10 of \cite{ZieglerP} says that every shelling order $<_\bullet$ of $P_M^*$ is reversible. Thus it suffices to find a shelling order of $\I(M)$ that is not reversible for some matroid $M$. There are several ways to achieve this, here we use just one. Let $M$ be any rank $d$ matroid that contains three pairwise disjoint bases $B$, $B'$ and $B''$. Consider any ordering of $E$ such that $B$ is the lexicographically smallest facet. By Lemma 3.2 of \cite{KleeSam2015}, both $B'$ and $B''$ are homology facets in the lexicographic shelling $<$ associated to the order. Hence there is a shelling order $\hat<$ of $\I(M)$ that coincides with $<$ on every basis different from $B', B''$ and such that the last two bases in the order are $B'$ and $B''$. The reversal of this order is not a shelling order since the first two bases on the list are disjoint. Therefore $\hat <$ is not geometric.
\end{proof}

\begin{defn}\label{definition:weakly-geometric}
A shelling order $<$ of $\I(M)$ is called weakly geometric if there is a geometric shelling $\hat <$ of $\I(M)$ such that the $<$-restriction set of any basis $B$ equals the $\hat{<}$-restriction set of $B$.
\end{defn}

\begin{rem}
Notice that the shelling orders obtained in the proof of Proposition \ref{prop:nonRev} have the same restriction sets as the original lexicographic shelling. We will see in the next section that lexicographic shelling orders are always geometric, thus our examples of non-geometric shelling orders are weakly geometric.
\end{rem}

\begin{ex}\label{ex:independenceButNotPolytope}
The uniform matroid $U_{4,2}$ is the matroid whose set of bases is $\binom{[4]}{2}$. Its matroid polytope is combinatorially equivalent to an octahedron and thus its dual matroid polytope is combinatorially equivalent to a cube. The edge ordering $12 < 23 < 34 < 14 < 13 < 24$ is a shelling order of $\I(U_{4,2})$ that is not a shelling order of the cube. Furthermore, the restriction sets and the corresponding poset for this shelling order are given by
\begin{center}
    \includegraphics[width=0.6\textwidth]{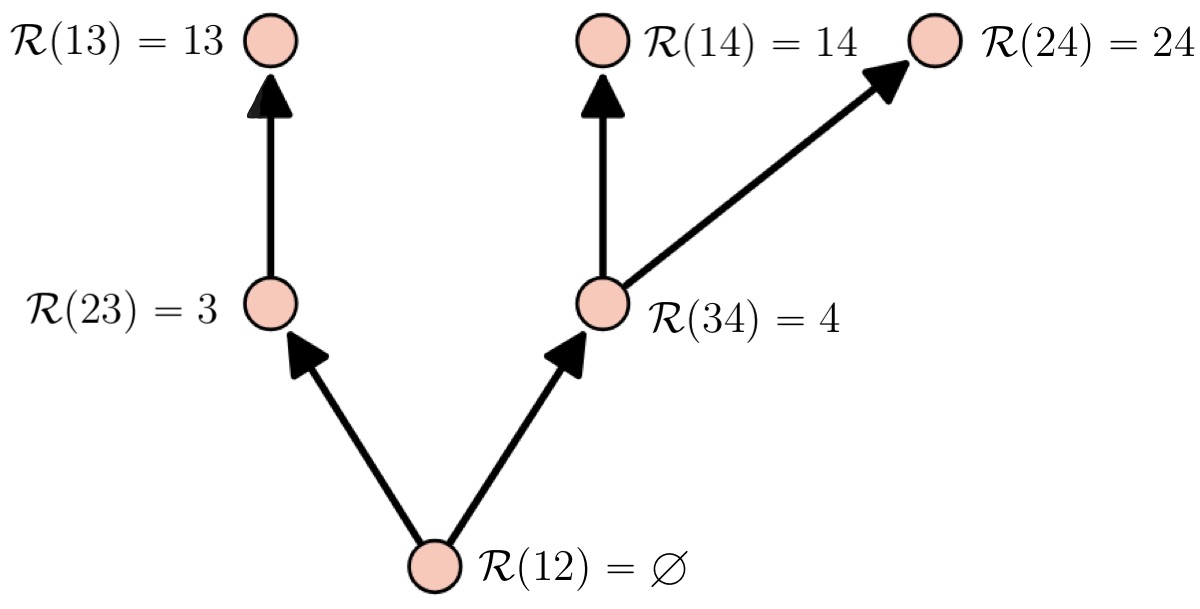}
\end{center}
As can be explicitly checked, this is not the restriction set poset of any shelling order coming from the cube.
\end{ex}

In the proof of Proposition \ref{prop:nonRev} we use three disjoint bases to demonstrate the existence of counterexamples. In general, there are only finitely many matroids with no loops or coloops of a fixed rank for which these three bases fail to exist. This motivates the following question:

\begin{ques}\label{ques:classify-reversible}
Is it possible to classify the matroids for which all shelling orders are reversible? 
\end{ques}

We also believe that the following question will have interesting combinatorial consequences.

\begin{ques}\label{ques:construct-weakly-geometric}
Is there a systematic way to construct shelling orders that are not weakly geometric?
\end{ques}

\section{Line shellings and internal activity}\label{section:line-shellings-and-internal-activity}

In the last section we proved that the polytope $P_M^*$ induces a large collection of shelling orders for $\I(M)$. We now explore what line shellings of $P_M^*$ yield in this setting, beginning with a proof of part A of Theorem \ref{thm:lweights-imply-shelling} which was stated in the introduction.

\begin{proof}[Proof of Theorem \ref{thm:lweights-imply-shelling}, part A]

We already discussed how $\ell$ can be interpreted as a real-valued function on $E$ or as an element of $(\mathbb{R}^E)^*$. The $\ell$ weights are exactly the $\ell$-evaluations on the vertices of the matroid polytope, hence the weight ordering is a line shelling of $P_M^*$. By Theorem \ref{thdm:shell}, this gives a shelling order of $\I(M)$.
\end{proof}

\begin{rem}
By choosing $\ell$ such that  $\{\ell(e)\, | \, e\in E\} = \{2^{|E|} - 2^i \,| \,\ i=1,2,\dots, |E|\}$ the total order on $\B$ induced by $\ell$ is the lexicographic order. This can be thought of as encoding lexicographic order in a binary sequence, and recovers a result of Bj\"orner. Furthermore, every shelling order of this form is reversible (change $\ell$ for $-\ell$). Thus the colex order on $B$ is also a shelling order, recovering a result of Chari \cite{Chari1996ReversibleShellingsAndInequalityForHVectors} and showing Hibi's inequalities (see also \cite{chari2, Hibi}).
\end{rem}

\begin{rem}
One of the prominent features of lexicographic shellings is its relation to Tutte's activity theory and the Tutte polynomial. The restriction sets of these shellings are called the internally passive sets of the underlying order of the ground set. While not important in this article, we remark that external activity, the remaining ingredient to get the whole Tutte Polynomial, is exactly the data corresponding to the lexicographic shelling order of the dual matroid with respect to the same order.
\end{rem}

In order to continue our study of line shellings, the following definition will be quite useful. 

\begin{defn}
Let $\Delta$ be a pure simplicial complex and let $<$ be a shelling order of $\Delta$. The restriction set poset $P(\Delta, <)=(\mathcal{F}, \prec)$ is the poset on the set $\mathcal{F}$ of facets of $\Delta$ where $F \prec F'$ if $\mathcal{R}(F) \subseteq \mathcal{R}(F')$.
\end{defn}

This poset has been studied for ordered matroids before, without making much reference to shellability. If $<$ is an ordering of the ground set $E$ and $<_\text{Lex}$ is the associated lexicographic ordering of the bases, then $P(\I,<_\text{lex})=\text{Int}_<(M)$ as defined by Las Vergnas in \cite{LasVergnas}. Furthermore, this poset showed up in Dawson's work \cite{Dawson} even before the systematic study of Las Vergnas. We state here the main theorems of interest. 

\begin{thm}\label{thm:Dawson-Chari} Let $M=(E,\I)$ be a matroid, $<$ be a total order on $E$ and let $<_\text{lex}$ be the lexicographic order on the bases. Then the following is true for $P:= P(\I, \lex)$
\begin{enumerate}
    \item[A.] \cite{Dawson} The poset $P$ is ranked by the size of the restriction set of the basis and is the containment poset of a greedoid, that is, if $|IP_<(B)|<|IP_<(B')|$, then there is $b\in IP_<(B') \backslash IP_<(B)$ and a basis $B''$ such that $IP_<(B'')= IP_<(B)\cup\{b\}$.\footnote{Notice that this last property resembles that of the independence complex of a matroid. However, the set system of the internally passive sets here is not a simplicial complex. }
    \item[B.] \cite{LasVergnas} Let $\hat P$ be the poset $P$ with an extra element that is a maximum. Then $\hat P$ is a lattice.
\end{enumerate}
\end{thm}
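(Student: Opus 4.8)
The theorem collects two classical results, so I would prove each part essentially by reducing to combinatorial lemmas about internally passive sets, possibly re-deriving them via the geometry that the paper emphasizes. For part A, the plan is first to recall the exchange structure: if $B$ is a basis and $b \in B$, then $b \in IP_<(B)$ precisely when some $b' < b$ outside $B$ satisfies $(B \setminus \{b\}) \cup \{b'\} \in \B$, and in that case we may take $b'$ to be the $<$-minimal such element, which we call $\min_<(B,b)$. The key observation is that the map sending $b \in IP_<(B)$ to $\min_<(B,b)$ together with the "closure" operation on $B$ (replace every internally passive element simultaneously by its minimal exchange partner, iterating until stable) produces a distinguished basis $\widehat{B}$ with $IP_<(\widehat{B}) = \emptyset$ and, more importantly, that $IP_<(B)$ determines $B$ via a greedy reconstruction. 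Ranking by $|IP_<(B)|$ is then immediate from the exchange axiom once one shows the set system $\{IP_<(B) : B \in \B\}$ is accessible; the greedoid exchange property is the substantive claim.

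To establish the greedoid exchange property I would argue as follows. Suppose $|IP_<(B)| < |IP_<(B')|$. Consider the basis exchange graph $G_\B$: since $B \ne B'$ there is a basis $B_1$ adjacent to $B$ with $|B_1 \cap B'| > |B \cap B'|$, obtained by a single symmetric exchange. One then tracks how the internally passive set changes along a single edge of $G_\B$ — this changes $IP_<$ by adding or removing a single element, or swapping one element for another — and shows that by choosing the exchange carefully (picking the exchanged-in element to lie in $IP_<(B') \setminus IP_<(B)$ and the exchanged-out element to be $<$-larger, exploiting that $B'$ has "more room below"), one can always move to a basis $B_1$ with $IP_<(B_1) = IP_<(B) \cup \{b\}$ for some $b \in IP_<(B') \setminus IP_<(B)$. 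The bookkeeping here is the crux, and it is cleanest to phrase it via the corresponding statement on the matroid polytope $P_M$: internally passive sets are the restriction sets of the line shelling $<_\ell$ (Theorem \ref{thm:lweights-imply-shelling}B), and a single step in $G_\B$ is a single edge of $P_M$, so the change in restriction set along an edge is controlled by which facet of $P_M^*$ the edge lies on. I would use this geometric translation to organize the case analysis, but the underlying content is the matroid exchange argument of Dawson.

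For part B, granted that $\widehat{P} = \widehat{P(\I, <_{\text{lex}})}$ is ranked and accessible with a top element, it suffices to verify that any two elements have a meet (the existence of a maximum then forces joins as well, and a finite ranked poset in which every pair has a meet and which has a maximum is a lattice). Given bases $B, B'$, I would construct the meet by the greedy procedure: build up a subset $S$ of the ground set by repeatedly adjoining, among elements that keep $S$ of the form $IP_<(\text{some basis})$ and keep $S \subseteq IP_<(B) \cap IP_<(B')$, the appropriate $<$-minimal choice, and show this yields a well-defined maximum element below both — the greedoid property from part A guarantees we never get stuck prematurely, and the accessibility guarantees the process is exhaustive. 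Uniqueness of the output (independence of the greedy choices) is where one must be careful; it follows from an exchange-compatibility lemma showing that if $S \cup \{b\}$ and $S \cup \{b'\}$ are both valid passive sets below $B$ and $B'$, then so is $S \cup \{b, b'\}$.

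The main obstacle, I expect, is the greedoid exchange property in part A: translating "$|IP_<(B)| < |IP_<(B')|$" into a usable single-step move in the exchange graph requires a delicate choice of which matroid basis exchange to perform, and verifying that this exchange adds exactly one element to the internally passive set — rather than swapping or removing elements — is the technical heart. Everything downstream (the ranking, the accessibility, and the entire lattice argument in part B) follows fairly mechanically once that property is in hand. I would therefore budget most of the effort, and most of the geometric reinterpretation via edges of $P_M$ and facets of $P_M^*$, on that single step.
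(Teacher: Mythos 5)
The paper does not actually prove Theorem \ref{thm:Dawson-Chari}: it is quoted with citations to Dawson and Las Vergnas, so the only meaningful comparison is with their original combinatorial arguments, to which your sketch ultimately defers as well (``the underlying content is the matroid exchange argument of Dawson''). Judged as a proof in its own right, your proposal has a genuine gap at exactly the point you identify as the crux. The organizing claim that a single edge of $G_\B$ changes the internally passive set by adding or removing one element, or swapping one element for another, is false. In $U_{6,3}$ with the natural order, $B=\{1,2,5\}$ and $B_1=\{2,3,5\}$ are related by a single exchange, yet $IP_<(B)=\{5\}$ while $IP_<(B_1)=\{2,3,5\}$: two elements are gained at once. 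Hence the proposed reduction of the greedoid exchange property to a controlled one-step move in the basis exchange graph does not go through as stated, and the existence of a basis $B''$ with $IP_<(B'')=IP_<(B)\cup\{b\}$ --- which is the entire content of part A --- is never actually established; the geometric reformulation via edges of $P_M$ and facets of $P_M^*$ organizes, but does not supply, the missing argument. Accessibility and rankedness, which you call immediate, depend on this same property and are likewise not shown.

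Part B inherits this and adds a gap of its own: the well-definedness of your greedy meet hinges on the asserted lemma that if $S\cup\{b\}$ and $S\cup\{b'\}$ are internally passive sets contained in $IP_<(B)\cap IP_<(B')$, then so is $S\cup\{b,b'\}$. No argument is offered, and it is not obvious, since the internally passive sets are not closed under taking subsets (they form a greedoid, not a simplicial complex, as the footnote to the theorem notes), so local union-closure statements of this kind require proof; Las Vergnas's lattice theorem is obtained by a different analysis of the internal order rather than by a greedy meet construction. In short, your plan mirrors the logical skeleton of the classical results (exchange property, then rankedness, then the lattice property), but the technical heart of both parts is missing, and the one concrete mechanism you propose for part A is contradicted by small uniform-matroid examples.
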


We claim that the restriction set poset associated to a line shelling with functional $\ell$ depends only on the underlying ordering induced by the coordinates, i.e, the braid cone it belongs to. This is related to Theorem~3.8 in \cite{ArdCasSam}: the restriction set poset is exactly $\text{Int}_<(M)$, so it suffices to show that the order by weights is a linear extension. It is now time to prove part B of Theorem \ref{thm:lweights-imply-shelling}.

\begin{proof}[Proof of Theorem \ref{thm:lweights-imply-shelling}, part B]
Notice that $\ell$ induces an acylic orientation on the one skeleton of $P_M$: orient each edge from the smallest to the largest weight of the vertices which are its endpoints. This acyclic orientation endows $\B(M)$ with a well-studied poset structure, the Gale ordering of the bases. The article \cite{Samper} shows that the Gale poset as defined in Section 2 of \cite{Samper} is a coarsening of $\text{Int}_<(M)$. Now notice that ordering the bases according to total weight produces a linear extension of the Gale ordering and hence a linear extension of $\text{Int}_<(M)$. The result follows from Theorem 4.14 in \cite{ArdCasSam}.
\end{proof}

\section{Extendable shellability and heuristics for Simon's conjecture}\label{section:extendable shellability-heuristics-for-simons-conjecture}

As discussed in the introduction, we now relate our results to Simon's conjecture.

\begin{conj}[Simon \cite{Simon}]
For every $1< k \le n$, the independence complex of the uniform matroid $U_{n,k}$ is extendably shellable, i.e if $B_1<\dots <B_s$ are bases of $U_{n,k}$ ordered to shell $\langle B_1,\dots,\, B_s \rangle$, then there is an ordering $B'_1<\dots < B'_t$ of the remaining bases of $U_{n,k}$ such that $$B_1 < \dots B_s < B'_1 < \dots <B'_t$$ is a shelling order of $U_{n,k}$.
\end{conj}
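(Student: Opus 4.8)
This is a long-standing open problem, so the realistic goal of a proof proposal is to lay out the strategy that Theorems~\ref{thdm:shell} and~\ref{thm:lweights-imply-shelling} make available and to pinpoint where it breaks down. The starting observation is that the matroid polytope of $U_{n,k}$ is the hypersimplex $\Delta_{k,n}$, so by Theorem~\ref{thdm:shell} every shelling of the dual polytope $\Delta_{k,n}^*$ pulls back to a geometric shelling of $\I(U_{n,k})$. The plan is therefore to first transport Simon's conjecture to the polytopal side, arriving at Conjecture~\ref{conj:hypersimplex-extendably-shellable}: granting that the dual hypersimplex is extendably shellable, at least every partial shelling of $\I(U_{n,k})$ realized as an initial segment of a shelling of $\Delta_{k,n}^*$ extends to a full shelling of $\I(U_{n,k})$. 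For this polytopal step I would exploit the large symmetry group of $\Delta_{k,n}$ and the fact that its normal fan coarsens the braid fan, attempting to prolong a given partial shelling of $\Delta_{k,n}^*$ to a line shelling, or to a (pinned) broken-line shelling in the sense of Section~\ref{section:pinned-broken-lines-and-generalized-internal-activity}, whose induced vertex order begins with the prescribed facets.

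The second, and genuinely hard, step is to close the gap between ``geometric partial shelling'' and ``arbitrary partial shelling.'' Proposition~\ref{prop:nonRev} already shows that not every shelling of $\I(M)$ is geometric, and the situation here is worse: a partial shelling $B_1<\dots<B_s$ of $\I(U_{n,k})$ need not even be weakly geometric in the sense of Definition~\ref{definition:weakly-geometric}, so one cannot simply invoke Conjecture~\ref{conj:hypersimplex-extendably-shellable} to continue it. The natural sub-problem becomes: decide whether a given partial shelling of $\I(U_{n,k})$ agrees --- on restriction sets, or better on the underlying partial polytopal structure --- with an initial segment of a line or broken-line shelling of $\Delta_{k,n}^*$. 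Here Theorem~1.3 of~\cite{ArdCasSam} is the obstruction: it characterizes which orderings of the bases are realized by $\ell$-weights, and that class is strictly smaller than the class of all partial shellings, so there remain partial shellings that ``look non-geometric'' and are governed by no polytope we control.

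Concretely I would proceed in two stages: (i) settle Conjecture~\ref{conj:hypersimplex-extendably-shellable} by an explicit line/broken-line prolongation argument on $\Delta_{k,n}$, leaning on its symmetry and on the coarsening of the braid fan; and (ii) for the combinatorial residue, try to prove that \emph{every} partial shelling of $\I(U_{n,k})$ is weakly geometric, and failing that, classify the exceptions and complete them by hand, using the enormous freedom available in choosing the tail (every $k$-subset of $[n]$ is a basis). I expect step~(ii) to be the real obstacle, and it is precisely what explains the difficulty of Simon's conjecture: the intersection condition defining shellability of a simplicial complex is far weaker than its polytopal counterpart, so some partial shellings of $\I(U_{n,k})$ may be invisible to the geometric machinery developed here, in which case the conjecture would require a fundamentally different, non-geometric input.
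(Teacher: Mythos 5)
There is no proof in the paper for you to be measured against: the statement is Simon's conjecture, which the authors state as an open problem and do not prove --- indeed they write that they ``agree with the widely spread opinion that the conjecture is false.'' You correctly recognized this and offered a strategy sketch rather than a proof, and that sketch essentially reproduces the paper's own Section \ref{section:extendable shellability-heuristics-for-simons-conjecture}: the matroid polytope of $U_{n,k}$ is a hypersimplex, Theorem \ref{thdm:shell} transports shellings of the dual polytope to $\I(U_{n,k})$, and the authors pose Conjecture \ref{conj:hypersimplex-extendably-shellable} while stressing, exactly as you do, that neither conjecture implies the other because the shelling orders of $\I(M)$ form a strictly larger class than those of $P_M^*$.

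Two cautions about your proposed stages. First, your reading of Theorem 1.3 of \cite{ArdCasSam} is off: as restated in the paper it says that \emph{any} linear extension of $\Int_<(M)$ is a shelling order of $\I(M)$; the resulting extendability criterion (Corollary \ref{cor:extend}) requires the partial shelling to be an \emph{order ideal} of $\Int_<(M)$ for some ground-set order. Your stage (ii), proving every partial shelling of $\I(U_{n,k})$ is weakly geometric in the sense of Definition \ref{definition:weakly-geometric}, would not close the gap even if true: weak geometricity only matches restriction sets with those of some geometric shelling and gives no order-ideal structure, hence no extension mechanism. Second, the paper's evidence leans the other way: boundaries of cross-polytopes of dimension at least $12$ are matroid independence complexes that are \emph{not} extendably shellable \cite{Hall2004CounterexamplesID}, so any argument for $U_{n,k}$ must use something genuinely special about uniform matroids. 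The authors' actual position is that Conjecture \ref{conj:hypersimplex-extendably-shellable} is a more tractable geometric surrogate whose resolution would merely inform Simon's conjecture, not a stepping stone from which Simon's conjecture could be deduced; your plan, taken literally, treats it as the latter.
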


In this section we make two observations that hopefully serve as a map to navigate the properties of partial shellings and help prove or disprove the conjecture. The first one is purely combinatorial and is related to Theorem 1.3 in \cite{ArdCasSam}. We rewrite the theorem here and state a simple corollary on extendable shellability.

\begin{thm} Let $M$ be a matroid and $<$ be a total order on its ground set. Any linear extension of $\text{Int}_<(M)$ is a shelling order of $\I(M)$. 
\end{thm}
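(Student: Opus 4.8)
The plan is to reduce the statement to Theorem~\ref{thm:lweights-imply-shelling} by showing that \emph{every} linear extension of $\text{Int}_<(M)$ arises, up to having the same restriction sets, from a generic linear functional $\ell$ inducing the order $<$ on $E$ — or, failing that, to argue directly that it is a geometric or weakly geometric shelling. First I would recall from the discussion following Theorem~\ref{thm:Dawson-Chari} and from the proof of part B of Theorem~\ref{thm:lweights-imply-shelling} that the restriction set of a basis $B$ in a line shelling by $\ell$ depends only on the braid cone containing $\ell$, i.e.\ only on the total order $<$ on $E$, and equals $IP_<(B)$. So the restriction set poset of the line shelling by any $\ell$ refining $<$ is exactly $\text{Int}_<(M)$. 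The content to be added is that one is free to realize \emph{any} linear extension of $\text{Int}_<(M)$, not merely the particular extension ``order by $\ell$-weight.''

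The key steps, in order, would be: (i) Fix a total order $<$ on $E$ and a linear extension $\prec$ of $\text{Int}_<(M)$, listing the bases as $B_1 \prec B_2 \prec \cdots \prec B_k$. (ii) Show by induction on $j$ that $\langle B_1,\dots,B_{j-1}\rangle \cap \langle B_j\rangle$ is pure of codimension one with restriction set $IP_<(B_j)$. The crucial combinatorial input is Theorem~\ref{thm:Dawson-Chari}(A): since $\text{Int}_<(M)$ is ranked by $|IP_<(\cdot)|$ and is the containment poset of a greedoid, and since a line shelling by a functional $\ell$ refining $<$ \emph{does} shell $\I(M)$ with these exact restriction sets, the faces of $B_j$ containing $IP_<(B_j)$ are precisely the new faces — and this set of new faces is forced to be the same no matter which linear extension of $\text{Int}_<(M)$ we traverse, because whether a face $F \subseteq B_j$ lies in $\langle B_i : B_i \prec B_j\rangle$ is controlled by the restriction sets of the $B_i$, hence by $\text{Int}_<(M)$, not by the particular order. (iii) Conclude that the new faces form the half-open interval $[IP_<(B_j), B_j]$, which is pure of the right dimension, so $\prec$ is a shelling order.

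The step I expect to be the main obstacle is (ii): making precise why ``which faces of $B_j$ are new'' depends only on $\text{Int}_<(M)$ and not on the chosen linear extension. The honest way to see this is the following. A face $F \subseteq B_j$ is new at step $j$ iff $F \not\subseteq B_i$ for every $i < j$ in the order $\prec$. In the line shelling by $\ell$ (which we know is a shelling, with restriction set $IP_<(B_j)$), $F$ is new iff $IP_<(B_j) \subseteq F$. We must rule out the possibility that, reordering within $\text{Int}_<(M)$, some basis $B_i$ with $B_i \prec B_j$ but $B_i >_\ell B_j$ ``covers'' a face $F \supseteq IP_<(B_j)$ of $B_j$. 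If this happened, then $F \subseteq B_i \cap B_j$, so $F$ would already be covered in the $\ell$-shelling by the time $B_i$ is processed there, forcing $IP_<(B_i) \subseteq F \subseteq B_j$; but then $IP_<(B_i) \subseteq IP_<(B_j)$ is impossible since $|IP_<(B_i)| \ge |IP_<(B_j)|$ would be needed for $B_j \prec B_i$ to fail while $B_i \prec B_j$ holds — one has to chase the ranks carefully here. The cleanest resolution is to invoke Theorem~3.8 and Theorem~4.14 of \cite{ArdCasSam} (already cited in the proof of Theorem~\ref{thm:lweights-imply-shelling}, part B), which identify the restriction set poset with $\text{Int}_<(M)$ and, crucially, establish that \emph{any} linear extension works; so the proof ultimately reduces to quoting that result and observing that the order-by-$\ell$-weight statement of Theorem~\ref{thm:lweights-imply-shelling} is the special case of a particular linear extension. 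If one instead wants a self-contained argument, the obstacle above is where all the real work sits, and it is resolved by the greedoid exchange property of Theorem~\ref{thm:Dawson-Chari}(A) together with the observation that new faces always form an interval $[IP_<(B_j),B_j]$.
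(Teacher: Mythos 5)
The paper does not actually prove this statement: it is presented verbatim as a restatement of Theorem~1.3 of \cite{ArdCasSam}, so your ``cleanest resolution'' of simply quoting \cite{ArdCasSam} lands exactly where the paper does. But be aware that your overall plan of reducing the statement to Theorem~\ref{thm:lweights-imply-shelling} runs the logic backwards relative to the paper: part~B of Theorem~\ref{thm:lweights-imply-shelling} is itself deduced (via Theorem~4.14 of \cite{ArdCasSam}) from the fact that linear extensions of $\text{Int}_<(M)$ shell $\I(M)$ with restriction sets $IP_<(\cdot)$ --- i.e.\ from the very statement at hand. So a proof that goes ``Theorem~\ref{thm:lweights-imply-shelling} plus Theorems 3.8/4.14 of \cite{ArdCasSam}'' is either circular or just a citation; passing from the special linear extension ``order by $\ell$-weight'' to an arbitrary linear extension is where all the work lies, and that is precisely what your self-contained sketch in step (ii) must carry.

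That sketch has a genuine gap. First, the deduction ``$F$ would already be covered in the $\ell$-shelling by the time $B_i$ is processed, forcing $IP_<(B_i)\subseteq F$'' is backwards: a face of $B_i$ that is already present when $B_i$ is attached is one that does \emph{not} contain the restriction set, so you only learn $IP_<(B_i)\not\subseteq F$, from which no rank-chasing contradiction follows. Second, the assertion that ``whether a face $F\subseteq B_j$ lies in $\langle B_i : B_i\prec B_j\rangle$ is controlled by $\text{Int}_<(M)$, not by the particular order'' is exactly the claim to be proven (the set of $\prec$-predecessors of $B_j$ does depend on the chosen extension), so as stated it is circular. What the argument actually needs is the implication: for bases $A,B$, if $IP_<(A)\subseteq B$ then $IP_<(A)\subseteq IP_<(B)$, i.e.\ $A\le B$ in $\text{Int}_<(M)$. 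Applied with $A=B_j$ it rules out an $\text{Int}$-incomparable basis $B_i$ placed earlier by $\prec$ containing a face $F\supseteq IP_<(B_j)$; applied with $A$ the $<_\ell$-first basis containing $F$ it shows that every face $F\subseteq B_j$ with $IP_<(B_j)\not\subseteq F$ is covered by a basis strictly below $B_j$ in $\text{Int}_<(M)$, hence by a $\prec$-earlier basis in every extension. This implication is the nontrivial matroid-theoretic content of the cited result; neither Theorem~\ref{thm:Dawson-Chari}(A) (rankedness/greedoid exchange) nor a cardinality count supplies it, so an honest self-contained proof would have to establish it by an explicit basis-exchange argument rather than wave at ``chasing the ranks.''
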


As a corollary, we get the following result on extendable shellability. For this, recall that an order ideal $Q$ of a partially ordered set $P$ is a subset of its elements that is downward closed, that is, a subset satisfying that if an element $p$ is in $Q$, then all elements in $P$ below $p$ are also in $Q$.

\begin{cor}\label{cor:extend}
Let $M$ be a matroid and let $B_1,\dots ,B_s$ be a partial shelling such that the set $\{B_1,\dots B_s\}$ is an order ideal of $\Int_<(M)$ for some order $<$ on the groundset of $M$. Then the partial shelling is extendable.
\end{cor}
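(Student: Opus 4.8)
The plan is to bootstrap off the theorem just stated (the restatement of Theorem~1.3 of \cite{ArdCasSam}): \emph{every} linear extension of $\Int_<(M)$ is a shelling order of $\I(M)$. The given partial shelling $B_1 < \dots < B_s$ need not respect $\Int_<(M)$, but since $\{B_1,\dots,B_s\}$ is an order ideal of $\Int_<(M)$ it still occupies an initial segment of \emph{some} linear extension of that poset; the remaining ingredient is the elementary observation that, in any shelling, one may replace an initial segment by any other shelling of the subcomplex it generates without destroying the shelling property.

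Concretely, first I would choose a linear extension $\tau$ of $\Int_<(M)$ in which $\{B_1,\dots,B_s\}$ forms an initial segment. This is the standard fact that an order ideal of a finite poset is an initial segment of some linear extension: concatenate any linear extension of $\Int_<(M)$ restricted to $\{B_1,\dots,B_s\}$ with any linear extension of $\Int_<(M)$ restricted to the complement $\mathcal{C}=\B(M)\setminus\{B_1,\dots,B_s\}$; downward-closedness of $\{B_1,\dots,B_s\}$ means no relation of $\Int_<(M)$ runs from $\mathcal{C}$ into $\{B_1,\dots,B_s\}$, so the concatenation is genuinely order-preserving. Write $\tau$ as $C_1 < \dots < C_s < C_{s+1} < \dots < C_k$ with $\{C_1,\dots,C_s\}=\{B_1,\dots,B_s\}$ as sets. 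By the theorem above, $\tau$ is a shelling order of $\I(M)$.

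Next I would argue that $B_1 < \dots < B_s < C_{s+1} < \dots < C_k$ is again a shelling order of $\I(M)$, so that setting $B'_i := C_{s+i}$ exhibits the desired extension. For $j\le s$ the required condition is exactly the hypothesis that $B_1<\dots<B_s$ shells $\langle B_1,\dots,B_s\rangle$ (a complex pure of dimension $r-1$ because every $B_i$ has size $r$). For $j>s$, the subcomplex generated by a set of facets depends only on that set, so $\langle B_1,\dots,B_s,C_{s+1},\dots,C_{j-1}\rangle=\langle C_1,\dots,C_{j-1}\rangle$, whence $\langle B_1,\dots,B_s,C_{s+1},\dots,C_{j-1}\rangle\cap\langle C_j\rangle=\langle C_1,\dots,C_{j-1}\rangle\cap\langle C_j\rangle$, which is pure of dimension $r-2$ precisely because $\tau$ is a shelling. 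This checks every case.

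I do not expect a genuine obstacle: the only points requiring care are the elementary lemma that an order ideal is an initial segment of a linear extension (this is where the order-ideal hypothesis is used) and the observation that the generated complex is a function of the facet set alone, which licenses swapping $C_1<\dots<C_s$ for the prescribed partial shelling $B_1<\dots<B_s$. It may be worth recording the swap step as a stand-alone remark: an initial segment of a shelling can be rearranged into any shelling of the complex it generates, and the result is still a shelling. As a bonus, since the facets preceding $C_j$ for $j>s$ are the same in both orders, the restriction set of each $B'_i$ agrees with its restriction set in $\tau$, hence with the internally passive set $IP_<(B'_i)$.
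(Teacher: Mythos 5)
Your proposal is correct and follows essentially the same route as the paper: pick a linear extension of $\Int_<(M)$ in which the order ideal $\{B_1,\dots,B_s\}$ forms an initial segment (possible precisely because it is an order ideal), invoke the theorem that any linear extension of $\Int_<(M)$ shells $\I(M)$, and then swap in the prescribed partial shelling using the observation that the shelling condition at each facet depends only on the set of earlier facets, not their order. The extra details you supply (the construction of the linear extension and the explicit case split at $j\le s$ versus $j>s$) are just an expanded version of the paper's argument.
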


\begin{proof}
Let $<$ be the witness order of the statement and let $\prec$ be any linear extension of $\Int_<(M)$ such that the first $s$ elements are $\{B_1,\dots, B_s\}$ in some order. Let $B_{s+1},\dots, B_t$ be the remaining bases ordered using $\prec$. Then $B_1,\dots, B_t$ is a shelling order: the shellability property applied to $B_i$ depends only on $\{B_1,\dots, B_{i-1}\}$ as a set and not on how the previous bases were ordered. 
\end{proof}

While there are potentially many partial shellings that do not satisfy the condition of Corollary \ref{cor:extend}, it does not seem easy to construct them. Term orders which typically yield ways of sorting the sets are poised to fail: by Theorem \ref{thm:lweights-imply-shelling} the restriction set poset of a shelling order coming from \textit{any} term order produces the internally passive poset associated to the braid cone of the underlying order used with the term order. In fact, we know that there are matroids for which the independence complex is not extendably shellable. For instance, the boundary of any cross-polytope of dimension $\geq 12$ is the independence complex of a graphic matroid and was shown to not be extendably shellable in \cite{Hall2004CounterexamplesID}.

Thus, while we still agree with the widely spread opinion that the conjecture is false, it is worth remarking that a really novel idea is needed to prove/disprove it. A consequence of Theorem \ref{thdm:shell} is that Simon's Conjecture is to some extent related to the question of extendable shellability of dual hypersimplices. If $M$ is any matroid, then every extendable partial shelling of $P_M^*$ is an extendable partial shelling of $\I(M)$.

We remark that neither problem implies the other: the set of shelling orders of $\I(M)$ is larger than the set of shelling orders of $P_M^*$, thus extendable shellability of $P_M^*$ does not imply extendable shellability of $\I(M)$. The converse is also false. If $\I(M)$ is extendably shellable, then a partial shelling if $P_M^*$ indeed yields a partial shelling of $\I(M)$, but it may be the case that the extensions in $\I(M)$ are not shelling orders of $P_M^*$. We do not know much more about the case of the hypersimplex, which is the one directly connected to Simon's conjecture. 

However, deciding the property for one of the two objects is likely to yield interesting consequences on the other side of the picture. We therefore leave the following conjecture and two questions, all of which are likely to be more tractable than Simon's conjecture.

\noindent \textbf{Conjecture \ref{conj:hypersimplex-extendably-shellable}.}
\emph{The dual polytope of every hypersimplex is extendably shellable.}

\begin{ques}\label{ques:dual-matroid-extendably-shellable}
For which matroids is the dual matroid polytope extendably shellable?
\end{ques}

\begin{ques}\label{ques:boundaries-crosspolytopes}
The boundaries of cross-polytopes of dimensions at least 12 are independence complexes that are not extendably shellable. Do the partial shellings which fail to extend (see \cite{Hall2004CounterexamplesID}) correspond to partial shellings of the dual matroid polytope?
\end{ques}

\section{Pinned broken lines and generalized internal activity}\label{section:pinned-broken-lines-and-generalized-internal-activity}
In \cite{DallInternallyPerfect2017}, Dall proves that for a large class of ordered matroids $(M,<)$, which he calls internally perfect, the restriction set poset $\text{Int}_{<}(M)$ is the divisibility poset of a multicomplex, thereby showing Stanley's pure $\mathcal{O}$-sequence conjecture for this class of matroids.

As seen above, the internal activity comes from line shellings associated to generic linear functionals. A straight-forward way to modify this order is by thinking of the linear functional as sweeping the matroid polytope and slightly wiggling the normal vector defining the hyperplane. Accordingly we prove the following lemma and give a related definition. 

\begin{lemma}\label{lem:broken}
Let $M$ be a matroid and assume that $B_1<B_2<\dots < B_n$ is an ordering of the bases such that for every $i=1,\dots, n$, there exists a linear functional $\ell_i \in (\mathbb{R}^E)^*$ such that $\ell_i(B_j) < \ell_i(B_i)$ if and only if $j<i$. In this case, the order is a shelling order of $\I(M)$ such that the restriction set $\RR(B_i)$ of the basis $B_i$ is $IP_{\ell_i}(B_i)$.
\end{lemma}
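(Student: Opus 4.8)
The plan is to reduce everything to what we already know about line shellings via Theorem \ref{thm:lweights-imply-shelling}, applied one basis at a time. First I would observe that the hypothesis gives, for each $i$, a linear functional $\ell_i$ that places $B_i$ strictly above exactly the bases $B_1,\dots,B_{i-1}$. After a generic perturbation (which does not change the relative order of $B_i$ versus the other vertices of $P_M$, since those comparisons are all strict) we may assume $\ell_i$ is generic, so it induces a genuine line shelling $<_{\ell_i}$ of $P_M^*$ and hence, by Theorem \ref{thm:lweights-imply-shelling}(A), a shelling order of $\I(M)$. The key point is that in the order $<_{\ell_i}$ the basis $B_i$ appears in position $i$, preceded exactly by $\{B_1,\dots,B_{i-1}\}$ as a \emph{set} (the internal order among them may differ from the given one). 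Since the defining property of a shelling order — that $\langle B_i\rangle \cap \langle B_1,\dots,B_{i-1}\rangle$ is pure of codimension one with restriction set $\RR(B_i)$ — depends only on the set of earlier facets and not on how they are ordered, the fact that $B_1<\dots<B_n$ agrees as a set-valued filtration with a bona fide shelling at every stage is exactly what is needed to conclude that $B_1<\dots<B_n$ is itself a shelling order, and that $\RR(B_i)$ computed in our order equals $\RR(B_i)$ computed in $<_{\ell_i}$.

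Next I would identify that restriction set. By Theorem \ref{thm:lweights-imply-shelling}(B), the restriction set of $B_i$ in the line shelling $<_{\ell_i}$ is the internally passive set $IP_{<_i}(B_i)$, where $<_i$ is the total order on $E$ induced by $\ell_i$; this is precisely what we have abbreviated $IP_{\ell_i}(B_i)$. Combining with the previous paragraph gives $\RR(B_i) = IP_{\ell_i}(B_i)$, as claimed. One subtlety to address cleanly: Theorem \ref{thm:lweights-imply-shelling}(B) is stated for $\ell$ generic on $E$, so I should note that the perturbation above can be chosen to make $\ell_i$ generic on $E$ as well — again harmless because all the comparisons $\ell_i(B_j)$ vs.\ $\ell_i(B_i)$ that the hypothesis controls are strict, and the internally passive set $IP_{<_i}(B_i)$ only depends on which single-element exchanges out of $B_i$ decrease some coordinate, a condition stable under small perturbation once $\ell_i$ is generic.

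The step I expect to be the main obstacle is making rigorous the claim that ``the restriction set depends only on the set of previously added facets.'' This is intuitively clear from the definition — $\RR(F_j)$ is characterized as the unique minimal new face, equivalently the intersection of all facets of $F_j$ lying in $\langle F_1,\dots,F_{j-1}\rangle$ — but it needs to be spelled out that (a) the shelling condition at step $j$ (purity and codimension one of the intersection) is a statement about the subcomplex $\langle F_1,\dots,F_{j-1}\rangle$, which is determined by the \emph{set} $\{F_1,\dots,F_{j-1}\}$, and (b) therefore if $B_1<\dots<B_n$ has the property that for each $i$ the set $\{B_1,\dots,B_i\}$ is an initial set of some shelling order, then $B_1<\dots<B_n$ is a shelling order with the same restriction sets. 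Here the line shellings $<_{\ell_i}$ supply, for each $i$, the witnessing shelling order whose first $i$ facets are exactly $\{B_1,\dots,B_i\}$. Once this bookkeeping lemma is isolated, the rest is immediate. I would state it as a short standalone observation (or invoke the same principle already used in the proof of Corollary \ref{cor:extend}) and then assemble the two ingredients.
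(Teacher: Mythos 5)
Your proposal is correct and follows essentially the same route as the paper's own proof: both reduce the claim to Theorem~\ref{thm:lweights-imply-shelling} applied to each witness $\ell_i$, using the observation that the shelling condition and restriction set at step $i$ depend only on the \emph{set} $\{B_1,\dots,B_{i-1}\}$ of earlier facets, so that $\RR(B_i)$ agrees with the restriction set in the line shelling $<_{\ell_i}$ and hence equals $IP_{\ell_i}(B_i)$ by part~B. The extra bookkeeping you spell out (genericity of the $\ell_i$ and the ``depends only on the set of previous facets'' principle, as in Corollary~\ref{cor:extend}) is left implicit in the paper but is exactly the intended argument.
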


\begin{proof}
We use again the fact that checking the shellability condition for $B_i$ only depends on the complex generated by $B_1,\dots, B_{i-1}$ and not on the order they were attached. By Theorem \ref{thm:lweights-imply-shelling} the order $<_{\ell_i}$ on the bases $\{B_1,..., B_i\}$ is a partial shelling of the matroid ending in $B_i$. Thus the shellability condition holds for $B_i$ and the restriction set is the same as in the shelling order $<_{\ell_i}$, that is, equal to $IP_{\ell_i}(B_i)$ by Theorem \ref{thm:lweights-imply-shelling} part B.
\end{proof}

\begin{defn}\label{def:broken-line-shelling-and-witnesses}
Any order satifsfying the conditions of Lemma $\ref{lem:broken}$ is called a \emph{broken line} shelling order. The functionals $\ell_1, \dots, \, \ell_n$ are called \emph{witnesses} of the broken line shelling. 
\end{defn}

Of course, any line shelling is an example of a broken line shelling: the functional $\ell_i$ is independent of $i$ in such a shelling order. Furthermore, notice that the witnesses are far from unique. In Section \ref{section:examples} we will see examples of broken line shelling orders that lead to restriction set posets different from the internally passive poset for any line shelling. The goal of this discussion is to present some ideas directed toward understanding Stanley's pure $\mathcal{O}$-sequence conjecture for matroid independence complexes. 

\begin{defn}
Let $M$ be a matroid, let $\prec$ be a broken line shelling order of $\I(M)$ and let $B$ be a a basis. The $\prec$-internal activity of $B$, denoted by $IP_{\prec}(B)$, is the restriction set of $B$ in the shelling order $<_{\ell_i}$ as in Lemma \ref{lem:broken}. Let $\text{Int}_\prec(M)$ be the poset on the bases of $M$ ordered by inclusion of the internally passive sets coming from $\prec$.
\end{defn}

Our next goal is to try to determine if at least some of these shelling orders give a rich poset structure, perhaps better than that given by line shellings. Recall from Theorem \ref{thm:Dawson-Chari} that for a line shelling, the restriction set admits a greedoid structure and is a graded lattice once an artificial maximum is attached. In the future, we would like to study conditions under which at least one of these properties holds for broken line shellings perhaps with other desirable properties. 

Inspired by Stanley's $h$-vector conjecture, we focus on a special kind of broken line shellings. These have the potential to extend Dall's work on internally perfect matroids \cite{DallInternallyPerfect2017} and prove the main Conjecture 3.10 of \cite{KleeSam2015}, perhaps in some special cases. Our goal is to make sure that the poset is ranked, that the rank of an element of the poset corresponds to the size of the restriction set, and to hope for some additional structure. For instance, a Lemma secretly hidden in the work of Las Vergnas that is essential for Conjecture 3.10 in \cite{KleeSam2015} is the following: 

\begin{lemma}[\cite{LasVergnas} Proposition 2.5]\label{lem:LasVergnas}
Let $M$ be a matroid with groundset $E$, let $\ell$ be a linear functional and let $B_1$ be the basis with the smallest $\ell$-weight. The containment $B\backslash B_1\subseteq IP_\ell(B)$ holds for any basis $B$. 
\end{lemma}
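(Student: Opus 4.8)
I would prove Lemma \ref{lem:LasVergnas} directly from the exchange axiom, without invoking the geometric machinery. Fix a basis $B$ and let $b \in B \setminus B_1$; the goal is to exhibit some $b' < b$ (in the total order on $E$ induced by $\ell$, i.e.\ with $\ell(b') < \ell(b)$) such that $(B \setminus \{b\}) \cup \{b'\}$ is again a basis. This will show $b \in IP_\ell(B)$, and since $b$ was an arbitrary element of $B \setminus B_1$, we get $B \setminus B_1 \subseteq IP_\ell(B)$.

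First I would use the basis exchange property (symmetric exchange) between $B$ and $B_1$: since $b \in B \setminus B_1$, there exists $b_1 \in B_1 \setminus B$ such that $B' := (B \setminus \{b\}) \cup \{b_1\}$ is a basis. If $\ell(b_1) < \ell(b)$ we are done immediately, taking $b' = b_1$. The substance of the argument is the case $\ell(b_1) > \ell(b)$ (genericity of $\ell$ rules out equality). Here I would argue by contradiction: suppose no valid $b'$ with $\ell(b') < \ell(b)$ exists, i.e.\ $b$ is \emph{not} replaceable by anything smaller in $B$. The key observation is that $B_1$ is the globally $\ell$-minimal basis, and $\ell$-minimal bases are exactly the ones obtained by the greedy algorithm run with respect to the order on $E$; equivalently, for $B_1$ minimal, every element of every basis can be exchanged "downward" toward elements of $B_1$. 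I would make this precise via the standard fact that the $\ell$-weight of a basis equals $\sum_{e \in B} \ell(e)$ and that among all bases $B_1$ minimizes this sum.

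The cleanest route is a weight/exchange argument: consider the basis $B' = (B\setminus\{b\})\cup\{b_1\}$ from the exchange above, with $\ell(b_1) > \ell(b)$, so $\ell(B') > \ell(B)$. Now iterate — but instead, I would invoke the following sharper exchange fact for the minimal basis: since $B_1$ is $\ell$-minimal, for the element $b \in B \setminus B_1$ there is $b_1 \in B_1 \setminus B$ with $\ell(b_1) \le \ell(b)$ and $(B\setminus\{b\})\cup\{b_1\}$ a basis. This is precisely a weighted/ordered refinement of the symmetric exchange axiom, valid because $B_1$ is a minimum-weight basis: if every $b_1 \in B_1 \setminus B$ available for exchange with $b$ had $\ell(b_1) > \ell(b)$, one could perform the reverse exchange on $B_1$ to decrease its weight, contradicting minimality. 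Genericity upgrades $\ell(b_1) \le \ell(b)$ to $\ell(b_1) < \ell(b)$, and then $b' = b_1$ witnesses $b \in IP_\ell(B)$.

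\textbf{The main obstacle.} The crux is justifying the ordered exchange statement: \emph{for a minimum-$\ell$-weight basis $B_1$ and any basis $B$, each $b \in B \setminus B_1$ can be exchanged for some $b_1 \in B_1\setminus B$ with $\ell(b_1) < \ell(b)$.} This is a classical matroid fact (a consequence of the greedy characterization of minimum-weight bases, or of the strong/symmetric exchange property combined with optimality), but writing it carefully is where the real work lies — one must set up the reverse exchange $(B_1 \setminus \{b_1\}) \cup \{b\}$ correctly, check it is a basis, and conclude that if all candidate $b_1$ were heavier than $b$ then $B_1$ was not optimal. Alternatively, I could bypass it entirely by appealing to the geometry already developed: the facet $F_{B_1}$ of $P_M^*$ is the first in the line shelling $<_\ell$, its restriction set is empty, and for a vertex $\chi_B$ of $P_M$ the edges of $P_M$ out of $\chi_B$ correspond to single-element exchanges; the condition $B \setminus B_1 \subseteq IP_\ell(B)$ then says every edge from $\chi_B$ in the direction \emph{away} from the $\ell$-minimal region decreases some coordinate in $B \setminus B_1$ — but the self-contained exchange argument above is shorter and I would present that.
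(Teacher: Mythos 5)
The paper does not prove this lemma at all: it is quoted as Proposition 2.5 of Las Vergnas and used as a black box, so your self-contained exchange argument is genuinely additional content rather than a rederivation of an in-paper proof. Your overall route is correct and is essentially the classical one: for $b\in B\setminus B_1$ produce $b_1\in B_1\setminus B$ with $(B\setminus\{b\})\cup\{b_1\}\in\B$ and $\ell(b_1)<\ell(b)$, using optimality of $B_1$ plus genericity, and note $b_1\notin B$ so it is a legitimate witness for $b\in IP_\ell(B)$. The one spot that needs tightening is exactly the step you flag: the contradiction as you phrase it (``if every $b_1$ available for exchange with $b$ were heavier, reverse the exchange on $B_1$'') silently identifies two a priori different candidate sets --- the elements that can replace $b$ in $B$ (the fundamental cocircuit $C^*(B,b)$, which is what ``available'' means) and the elements of $B_1$ that $b$ can replace (the fundamental circuit $C(B_1,b)$). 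Minimality of $B_1$ only controls the latter: every $e\in C(B_1,b)\setminus\{b\}$ satisfies $\ell(e)\le\ell(b)$ because $(B_1\setminus\{e\})\cup\{b\}$ is a basis. To transfer this to an element usable in $B$ you need a bridge, and either of two standard facts closes it cleanly: (i) Brualdi's symmetric exchange theorem, giving one $b_1\in B_1\setminus B$ for which \emph{both} $(B\setminus\{b\})\cup\{b_1\}$ and $(B_1\setminus\{b_1\})\cup\{b\}$ are bases, whence $\ell(b_1)\le\ell(b)$ by minimality and strict by genericity; or (ii) circuit--cocircuit orthogonality: $C(B_1,b)$ and $C^*(B,b)$ both contain $b$ and cannot meet in exactly one element, so they share some $e\ne b$, which lies in $B_1\setminus B$, replaces $b$ in $B$, and has $\ell(e)\le\ell(b)$. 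With either patch your proof is complete; as written, the purely ``reverse the exchange'' contradiction does not quite follow from the unilateral exchange axiom you start from.
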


The following simple observation holds for any shelling order of a pure simplicial complex. The proof is straight-forward and therefore omitted. 

\begin{lemma}\label{lem:restrictionVertices}
Let $\Delta$ be a pure simplicial complex and let $<$ be a shelling order whose first facet is $F_0$. Let $v$ be any vertex not in $F_0$ and let $F$ be the first facet of the shelling containing $v$, then $\RR(F) = \{v\}$.
\end{lemma}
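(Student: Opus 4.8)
\textbf{Proof plan for Lemma~\ref{lem:restrictionVertices}.}
The plan is to argue directly from the defining property of restriction sets in a shelling. Let $F$ be the first facet in the shelling order that contains the vertex $v$, and recall that $\RR(F)$ is characterized as the unique minimal face of $F$ such that every face of $\langle F_0,\dots,F\rangle$ not already in $\langle F_0,\dots,F^{-}\rangle$ (where $F^-$ denotes the facet immediately preceding $F$) is exactly the set of faces of $F$ containing $\RR(F)$. Equivalently, $\RR(F) = \{w \in F : F\setminus\{w\} \in \langle F_0,\dots,F^-\rangle\}$, i.e. $w \in \RR(F)$ iff the facet $F$ with $w$ deleted already appears as a face of some earlier facet. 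So the task reduces to showing (i) $v \in \RR(F)$, and (ii) no other vertex of $F$ lies in $\RR(F)$; in fact (i) alone combined with the fact that the ``new'' faces are those containing $\RR(F)$ will already force $\RR(F) = \{v\}$, as I explain below.

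First I would establish $v \in \RR(F)$. The singleton face $\{v\}$ is itself a face of $\langle F_0,\dots, F\rangle$ but, by the choice of $F$ as the \emph{first} facet containing $v$, it is not a face of $\langle F_0,\dots,F^-\rangle$ — indeed any face of an earlier facet is a subset of that earlier facet, none of which contain $v$. Hence $\{v\}$ is among the new faces contributed by $F$, so by the restriction-set characterization $\{v\}$ must contain $\RR(F)$; since $\RR(F)$ is nonempty (as $F \neq F_0$, there is always at least one new face and hence $\RR(F)\neq\emptyset$ — here we use $v\notin F_0$ so $F\neq F_0$), we conclude $\RR(F) = \{v\}$. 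This single observation already yields the whole statement: every new face of $F$ is a face of $F$ containing $v$, and conversely, and in particular the smallest such face is $\{v\}$ itself, forcing $\RR(F)=\{v\}$.

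To make the argument fully rigorous I would just double-check the two facts being used: that in any shelling, for $j\ge 1$ the set of faces in $\langle F_1,\dots,F_j\rangle$ but not in $\langle F_1,\dots,F_{j-1}\rangle$ is precisely $\{G : \RR(F_j) \subseteq G \subseteq F_j\}$ with $\RR(F_j)$ a well-defined nonempty subset of $F_j$ (this is exactly the statement quoted in the Preliminaries), and that a face of $\langle F_1,\dots,F_{j-1}\rangle$ is by definition contained in one of $F_1,\dots,F_{j-1}$. Both are immediate from the definitions already recalled in the paper. There is essentially no obstacle here; the only subtlety worth stating explicitly is why $\RR(F)\neq\emptyset$, which is where the hypothesis $v\notin F_0$ (equivalently $F$ is not the first facet) is used — without it the ``restriction set'' of the initial facet would be the empty set and the conclusion would fail. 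Since the paper flags this proof as straightforward, a one-paragraph writeup along these lines suffices.
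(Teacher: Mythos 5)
Your argument is correct and is exactly the straightforward proof the paper omits: since no earlier facet contains $v$, the singleton $\{v\}$ is a new face at the step where $F$ is attached, so the characterization of new faces gives $\RR(F)\subseteq\{v\}$, and nonemptiness of $\RR(F)$ yields equality. One small repair: your parenthetical justification that $\RR(F)\neq\emptyset$ (``there is at least one new face'') does not by itself rule out $\RR(F)=\emptyset$, since an empty restriction set would simply make \emph{every} subset of $F$ new; the correct reason is that $\emptyset$ (and, by the shelling condition, the codimension-one faces of $F$ in the intersection with the earlier complex) already lies in $\langle F_1,\dots,F_{j-1}\rangle$, which is incompatible with $\RR(F)=\emptyset$ --- or, avoiding nonemptiness altogether, note that the shelling condition forces some $F\setminus\{w\}$ to be an old face, and since every old face avoids $v$ this forces $w=v$, so $v\in\RR(F)$ directly.
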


Notice that Lemma \ref{lem:LasVergnas} is a generalization of Lemma \ref{lem:restrictionVertices} that does not generalize to arbitrary shelling orders. For instance, the following example shows that even one change of linear functional suffices to lose many of the nice properties. 

\begin{ex}
Consider the uniform matroid $U_{4,2}$ with vertices numbered from $1$ to $4$. The order $12, 13, 23, 34, 24, 14$ is a broken line shelling order, with witness functionals $\ell_1=(1,2,3,8)$ to order the first three bases and $\ell_2 = (3,2,1,8)$ to order the last three. The restriction set poset of this shelling order is a graded meet semi-lattice isomorphic to that of Example \ref{ex:independenceButNotPolytope}. It fails part \emph{A.} of Theorem \ref{thm:Dawson-Chari}. The restriction set of $34$ is $4$, yet the first basis of the shelling order is $12$, so the conclusion of Lemma \ref{lem:LasVergnas} fails for this shelling order. 
\end{ex}

The main issue in the example above is that the smallest bases of each of the two functionals are different. While $\ell_1$ is minimized at the vertex corresponding to the basis $12$, $\ell_2$ is minimized by the vertex corresponding to $23$. Thus Lemma \ref{lem:LasVergnas} does not apply and we see the explicit counterexample. We therefore narrow the study to a slightly smaller, yet very general class of shellings. 

\begin{defn}\label{def:pinned-broken-line-shelling}
Let $M$ be a matroid, let $B_1<B_2<\dots<B_n$ be a broken line shelling. We say that the shelling order is \emph{pinned} if there are witnesses $\ell_1,\,\dots \, \ell_n$ such that the smallest basis for each functional $\ell_i$ is $B_1$. 
\end{defn}

Pinned broken line shellings have more structured restriction set posets: Lemma \ref{lem:LasVergnas} applies and we summarize this in the following lemma, whose proof is a direct combination of Lemmas \ref{lem:broken} and \ref{lem:LasVergnas}. 

\begin{lemma}\label{lem:coveredAtoms}
Let $M$ be a matroid, let $B_1 < B_2 <\dots < B_n$ be a pinned broken line shelling with $\text{Int}_<(M)$ the associated restriction set poset. The atoms of $Int_<(M)$ are in bijection with the elements of $E$ that are neither loops nor contained in $B_1$. Furthermore, for any basis $B$, the atoms of $\text{Int}_<(M)$ below $B$ are exactly those bases whose restriction set is a single element of $B\backslash B_1$.
\end{lemma}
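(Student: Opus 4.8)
The plan is to exploit two facts: the faces of $\I(M)$ are partitioned by the boolean intervals $[\RR(B_i),B_i]$ determined by the shelling, and every basis $B$ satisfies $B\backslash B_1\subseteq \RR(B)$. The latter comes from Lemma \ref{lem:LasVergnas} applied to each witness $\ell_i$ --- legitimate precisely because the shelling is pinned, so $B_1$ is the $\ell_i$-minimal basis for every $i$, and $\RR(B_i)=IP_{\ell_i}(B_i)$ by Lemma \ref{lem:broken}. Combined with $\RR(B)\subseteq B$ this yields the identity $\RR(B)\backslash B_1=B\backslash B_1$ for all $B$, which drives the whole argument. I would also note at the outset that $\RR(B_1)=\emptyset$ (it is the first facet), so $B_1$ is the unique minimum of $\text{Int}_<(M)$, and that $B\mapsto\RR(B)$ is injective because $\RR(B)$ is the minimum of the unique interval containing it.

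The key step is to show that every atom $B$ has a singleton restriction set. Since $B\neq B_1$, the identity gives $\emptyset\neq B\backslash B_1=\RR(B)\backslash B_1$, so fix $e\in\RR(B)\backslash B_1$. As $e$ lies in the basis $B$, it is not a loop, hence $\{e\}$ is a face of $\I(M)$ and lies in a unique interval $[\RR(B'),B']$; since $e\notin B_1$ this interval is not $[\emptyset,B_1]$, forcing $\RR(B')=\{e\}$. In particular $B'$ covers $B_1$, so $B'$ is an atom, and $\{e\}=\RR(B')\subseteq\RR(B)$ gives $B'\preceq B$; minimality of the atom $B$ then forces $\RR(B)=\{e\}$.

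Granting this, the map sending an atom $B$ to the unique element of $\RR(B)$ is well-defined, lands in $\{e\in E:e\text{ not a loop},\ e\notin B_1\}$, is injective by injectivity of $B\mapsto\RR(B)$, and is surjective: for a non-loop $e\notin B_1$, the face $\{e\}$ lies in a unique interval $[\RR(B'),B']\neq[\emptyset,B_1]$, so $\RR(B')=\{e\}$ and $B'$ is an atom mapping to $e$. This settles the first assertion. For the second, fix a basis $B$; an atom $A$ with $\RR(A)=\{e\}$ lies below $B$ iff $e\in\RR(B)$, and since $e\notin B_1$ this is equivalent to $e\in\RR(B)\backslash B_1=B\backslash B_1$; conversely each $e\in B\backslash B_1$ lies in $\RR(B)$ by the identity, so the corresponding atom lies below $B$. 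Thus the atoms below $B$ are exactly the bases whose restriction set is a single element of $B\backslash B_1$.

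The only step with genuine content is showing that atoms have singleton restriction sets; the rest is bookkeeping with the interval partition. The point to handle carefully there is identifying, for a singleton face $\{e\}$ with $e\notin B_1$, which interval of the shelling it belongs to --- the partition property forces the corresponding restriction set to equal $\{e\}$, and this is exactly what promotes the relevant basis to an atom below the one we started with.
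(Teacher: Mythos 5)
Your proof is correct and matches the paper's approach: the paper simply declares the lemma ``a direct combination of Lemmas \ref{lem:broken} and \ref{lem:LasVergnas},'' which is precisely the identity $\RR(B)\setminus B_1=B\setminus B_1$ you extract from the pinning hypothesis. The remaining steps you spell out (the boolean-interval partition, injectivity of $B\mapsto\RR(B)$, and the singleton-restriction-set characterization of atoms) are exactly the bookkeeping the paper leaves implicit.
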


In fact there are a couple of cases that imply that the internal poset is well structured. For instance, it would be interesting to understand under which conditions it admits a greedoid structure, or when it is graded even after attaching an artificial top element. This seems complicated, but many of the examples shown below suggest that both are frequent phenomena for which there may be a good collection of conditions which imply these properties.

\section{A geometric approach to Stanley's pure O-sequence conjecture}\label{section:geometric-approach-to-stanleys-pure-o-sequence-conjecture}
Here we present an idea to systematically find pure multicomplexes that witness the validity of Stanley's conjecture for a given matroid. The idea is to combine the technique of \cite{DallInternallyPerfect2017} but replacing the lexicographic shelling with a pinned broken line shelling. The reason for restricting to broken lines comes from the solution for rank four matroids \cite{KleeSam2015}. There, the suggestion is to construct a multicomplex by gluing together several multicomplexes, and the following $h$-vector identity appears:

\begin{thm}[\cite{KleeSam2015}] Let $M$ be a matroid and let $B$ be any basis, then: 
\[h(M,t) = \sum_{I\in \I, I\cap B= \emptyset} t^{|I|}h((M\slash I)|_B, t)\]
Here, $M\slash I$ is the contraction of the set $I$ and $(M\slash I)|_B$ means the restriction of the matroid to the ground set $B$.
\end{thm}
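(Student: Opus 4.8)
The plan is to read off both sides of the identity from a single lexicographic shelling. First I would fix a linear order $<$ on $E$ in which the elements of $B$ come first; then $B$ is the $<$-minimal (greedy) basis, it is the first facet of the lexicographic shelling of $\I(M)$, and the restriction set of any basis $B'$ in this shelling is the internally passive set $IP_<(B')$. Hence $h(M,t) = \sum_{B'\in\B(M)} t^{|IP_<(B')|}$, and the task becomes to match this sum, term by term, with the right-hand side.

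Next I would organize the bases of $M$ by the map $B' \mapsto (I,J)$ with $I := B'\setminus B$ and $J := B'\cap B$. Since $B$ spans $M$, the minor $(M\slash I)|_B$ has rank $r-|I|$, its bases are exactly the sets $J\subseteq B$ with $J\sqcup I\in\B(M)$, and $B'\mapsto(B'\setminus B,\,B'\cap B)$ is a bijection from $\B(M)$ onto $\bigsqcup_{I\in\I,\,I\cap B=\emptyset}\B\bigl((M\slash I)|_B\bigr)$. The core step is then the refined identity
\[ IP_<(B') \;=\; I \,\sqcup\, IP^{(M\slash I)|_B}_<(J), \]
where on the right $<$ also denotes the induced order on $B$. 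Granting it, I would sum $t^{|IP_<(B')|} = t^{|I|}\,t^{|IP^{(M\slash I)|_B}_<(J)|}$ over the fibres of the bijection; since the lexicographic-shelling expansion computes the $h$-polynomial of $(M\slash I)|_B$ for \emph{any} order on $B$ (Theorem~\ref{thm:Dawson-Chari}), this yields exactly $\sum_I t^{|I|}\,h\bigl((M\slash I)|_B,t\bigr)$.

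The hard part will be the refined identity. The inclusion $I = B'\setminus B \subseteq IP_<(B')$ is immediate from Lemma~\ref{lem:LasVergnas} applied to a functional realizing the lexicographic order with minimal basis $B$, and since $IP_<(B')\subseteq B'$ the remaining passive elements lie in $J$; so what must be shown is $IP_<(B')\cap B = IP^{(M\slash I)|_B}_<(J)$. For $b\in J$ I would introduce the fundamental cocircuit $C^* := E\setminus\cl_M(B'\setminus b)$: it contains $b$, its elements other than $b$ are precisely the $x$ with $(B'\setminus b)\cup x\in\B(M)$, and all of them avoid $B'$, so $b\in IP_<(B')$ iff $\min C^* < b$. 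A direct check identifies the corresponding fundamental-cocircuit set for $b$ and $J$ inside $(M\slash I)|_B$ with $C^*\cap B$, so $b\in IP^{(M\slash I)|_B}_<(J)$ iff $\min(C^*\cap B) < b$. Thus everything reduces to: for $b\in C^*\cap B$, one has $\min C^* < b$ iff $\min(C^*\cap B) < b$. One direction is trivial; for the other, suppose $c := \min C^* < b$ with $c\notin B$. Because $B$ is the greedy basis, $c$ is the $<$-largest element of its fundamental circuit $C(c,B)$, so $C(c,B)\setminus\{c\}\subseteq B$ consists of elements $< c$; circuit--cocircuit orthogonality forces $|C^*\cap C(c,B)|\neq 1$, so some $y\in C^*\cap C(c,B)$ has $y\neq c$, and then $y\in C^*\cap B$ with $y < c < b$, as needed.

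I expect the reindexing and the $h$-polynomial bookkeeping to be routine; the genuine obstacle is the compatibility of internal activity with the minor $(M\slash I)|_B$ — that is, showing that deleting $E\setminus B$, which shrinks fundamental cocircuits, never turns a passive element active. The argument sketched above works precisely because $B$ was chosen greedy, which is exactly the hypothesis behind Lemma~\ref{lem:LasVergnas}. A less hands-on alternative would be to deduce the identity from the behaviour of the Tutte polynomial under these minors, but staying inside the shelling/internal-activity framework keeps the proof in the spirit of the rest of the paper.
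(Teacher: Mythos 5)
Your argument is correct, but note that the paper does not actually prove this statement — it is imported verbatim from \cite{KleeSam2015} — so there is no in-paper proof to compare against. What you wrote is the natural internal-activity proof, and it is consistent with how the paper uses the identity afterwards (the lemma that follows it is explicitly a reinterpretation of Lemma~\ref{lem:coveredAtoms}, i.e.\ of restriction sets for a shelling whose first basis is $B$). Your bijection $B'\mapsto (I,J)=(B'\setminus B,\,B'\cap B)$ onto pairs with $J\in\B\bigl((M/ I)|_B\bigr)$, and the refined identity $IP_<(B')=I\sqcup IP_<^{(M/ I)|_B}(J)$, both check out. Two small remarks. First, the ``hard'' case of your final step is vacuous: since your order places all of $B$ before $E\setminus B$ and $b\in B$, any $x<b$ lies in $B$, so $\min C^*<b$ already forces $\min C^*\in B$; the circuit--cocircuit orthogonality argument would only be needed if $B$ were merely lex-minimal for an arbitrary order, and it is harmless but superfluous here. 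Second, the fact that $\sum_{J} t^{|IP(J)|}=h\bigl((M/ I)|_B,t\bigr)$ for the induced order is the shelling identity $\mathcal{R}(J)=IP_<(J)$ together with the $h$-vector formula from Section~\ref{section:preliminaries}, not Theorem~\ref{thm:Dawson-Chari}, which concerns the poset structure of the restriction sets.
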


The heuristic behind the use of this identity is the following: inductively, each of the matroids $(M\slash I)|_B$ would be equipped with a multicomplex that we can use as a building block for $M$. The relationship with the results above is spelled out in the following lemma.

\begin{lemma}
Let $M$ be a matroid and let $<$ be a pinned broken line shelling whose first basis is $B$ and whose induced internal order is denoted by $\prec$. For each non loop element $i\in E\backslash B$ let $B_i$ be the basis whose restriction set with respect to $<$ is $\{i\}$. Let $I$ be an independent set disjoint from $B$. The number of bases $B'$ whose restriction set has $|I|+j$ elements and such that the atoms of $\text{Int}_<(M)$ below $B'$ correspond to the set $\{B_i\, | \, i\in I\}$ is $h_i((M\slash I)|_B)$. If $Int_<(M)$ happens to be ranked, then the cardinality of the restriction sets may be replaced by the rank in $Int_<(M)$.
\end{lemma}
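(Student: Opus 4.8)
The plan is to connect the combinatorial objects counted on the left-hand side (bases $B'$ of $M$ with a prescribed set of atoms below them in $\text{Int}_<(M)$ and with restriction set of size $|I| + j$) to the bases of the smaller matroid $(M\slash I)|_B$ via the internal activity bookkeeping supplied by Lemma \ref{lem:broken}, Lemma \ref{lem:LasVergnas} and Lemma \ref{lem:coveredAtoms}. First I would fix the pinned broken line shelling $<$ with first basis $B$ and, for each non-loop $i \in E\backslash B$, the basis $B_i$ with $\RR(B_i) = \{i\}$; by Lemma \ref{lem:coveredAtoms} these $B_i$ are exactly the atoms of $\text{Int}_<(M)$, and the atoms below a given basis $B'$ are precisely those $B_i$ with $i \in \RR(B')\cap(B'\backslash B)$. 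So the condition that the atoms below $B'$ are exactly $\{B_i \mid i \in I\}$ says precisely that $I \subseteq B'$ and $I = \RR(B')\cap (B'\backslash B)$, while Lemma \ref{lem:LasVergnas} (applicable because the shelling is pinned at $B$) gives $B'\backslash B \subseteq \RR(B')$, so in fact $B'\backslash B \subseteq I$ is forced as soon as those atoms equal $\{B_i \mid i\in I\}$ — equivalently $B'\backslash B = I$. Thus the bases being counted are exactly those $B'$ with $B'\backslash B = I$ (so $B' = (B\backslash X)\cup I$ for some $X\subseteq B$ of size $|I|$) whose restriction set has $|I|+j$ elements.

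Next I would translate the restriction set of such a $B'$, computed via the witness functional $\ell_{B'}$, into internal-activity data of the contracted-and-restricted matroid. Because $\RR(B') = IP_{\ell_{B'}}(B')$ by Lemma \ref{lem:broken}, and because $I \subseteq IP_{\ell_{B'}}(B')$, the elements of $I$ contribute exactly $|I|$ to the size of the restriction set; the remaining $j$ elements lie in $B'\backslash I = B\backslash X$. The key exchange-axiom computation is that an element $b \in B\backslash X$ is $\ell_{B'}$-internally passive in $B'$ as a basis of $M$ if and only if it is internally passive in the corresponding basis of $(M\slash I)|_B$, with respect to the order on $B$ induced by $\ell_{B'}$ — here one uses that a basis of $(M\slash I)|_B$ is a set $Y\subseteq B$ with $Y\cup I$ a basis of $M$, that basis exchanges within $M$ that stay inside $B$ (keeping $I$ fixed) are exactly basis exchanges in the contraction, and that the pinning condition forces the relevant witness orders to be compatible. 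Summing over the choices of $\ell_{B'}$-order on $B$ (which partitions the bases $Y\subseteq B$ of $(M\slash I)|_B$ into lexicographic-shelling classes indexed by linear orders, just as in Theorem \ref{thm:lweights-imply-shelling}) and invoking the shellability $h$-vector identity $\sum_j h_j t^j = \sum_Y t^{|IP(Y)|}$ for $(M\slash I)|_B$, I get that the count of bases $B'$ with restriction set of size $|I|+j$ and the prescribed atoms equals $h_j((M\slash I)|_B)$.

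The main obstacle I anticipate is the compatibility of witnesses: a pinned broken line shelling allows a \emph{different} functional $\ell_{B'}$ for each basis $B'$, so one must check that, for the bases $B'$ with $B'\backslash B = I$, the internal activities $IP_{\ell_{B'}}(B')$ restricted to $B\backslash X$ really do assemble — as $B'$ ranges over these bases — into the full collection of internally passive sets of $(M\slash I)|_B$ over \emph{all} linear orders of $B$, with correct multiplicity. In other words one needs that every linear order on $B$ (up to the coarsening that matters for internal activity) is realized by some witness $\ell_{B'}$, and that no order is over- or under-counted. This should follow from the genericity and pinning hypotheses together with Theorem \ref{thm:lweights-imply-shelling} part B (which says the restriction set poset of a line shelling depends only on the braid cone), but making the correspondence between witnesses of the broken line shelling of $M$ and linear functionals on $B$ precise — and verifying it is a bijection on the relevant equivalence classes — is the delicate step. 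The remaining pieces (the exchange-axiom identity relating internal passivity in $M$ and in $(M\slash I)|_B$, and the reduction of the counting condition to $B'\backslash B = I$) are routine given Lemmas \ref{lem:broken}, \ref{lem:LasVergnas} and \ref{lem:coveredAtoms}. Finally, the parenthetical remark about replacing cardinalities by ranks when $\text{Int}_<(M)$ is ranked is immediate, since in that case the rank of $B'$ equals $|\RR(B')|$ by definition of a ranked restriction set poset together with Lemma \ref{lem:coveredAtoms}.
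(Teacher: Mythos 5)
Your first paragraph is exactly the intended reduction: by Lemma \ref{lem:coveredAtoms} (which already encodes Lemma \ref{lem:LasVergnas} via pinnedness), the atom condition is equivalent to $B'\setminus B=I$, and $I\subseteq\RR(B')$, so one must count bases $B'$ with $B'\setminus B=I$ and $|\RR(B')\cap B|=j$. The genuine gap is in your counting step, and it is precisely the point you flag: a pinned broken line shelling assigns a possibly different witness $\ell_{B'}$ to each basis, so the sets $\RR(B')\cap B$ are internally passive sets of the bases $Y=B'\cap B$ of $(M/I)|_B$ computed with orders on $B$ that vary with $B'$, and a multiset of per-basis internally passive sets with varying orders need not distribute as the $h$-vector (already for a rank-one minor with two bases, giving each basis the order in which it is the larger element makes both restriction sets singletons, so $h_0$ is missed). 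Your proposed repair --- that every linear order on $B$ be realized by some witness and that the bases split into ``lexicographic-shelling classes'' with no over- or under-counting --- is neither a consequence of the definition of a pinned broken line shelling nor the right thing to prove: the identity $\sum_Y t^{|IP(Y)|}=h(t)$ for the minor needs a single order applied to \emph{all} of its bases, and no class need contain them all. (Your auxiliary claim that $b\in B\cap B'$ is $\ell_{B'}$-passive in $M$ iff it is passive in the minor is in fact true, but only because $\ell_{B'}$ is minimized at $B$: if the smallest exchange element $e$ for $b$ lies outside $B$, the fundamental circuit of $e$ with respect to the $\ell_{B'}$-greedy basis $B$ consists of $e$ and smaller elements of $B$, and orthogonality with the fundamental cocircuit of $b$ produces a smaller exchange element inside $B$. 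You left this unproved, and it does not by itself close the gap.)

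The argument the paper has in mind avoids comparing witness orders altogether. The shelling partitions the faces of $\I(M)$ into intervals $[\RR(B'),B']$. If $F$ satisfies $F\cap(E\setminus B)=I$ and lies in the interval of $B'$, then pinnedness gives $B'\setminus B\subseteq\RR(B')\subseteq F$, hence $B'\setminus B=I$; conversely every face in an interval with $B'\setminus B=I$ meets $E\setminus B$ exactly in $I$. Deleting $I$ identifies this set of faces with \emph{all} faces of $(M/I)|_B$, partitioned into the intervals $\left[\RR(B')\setminus I,\; B'\cap B\right]$, whose tops are precisely the facets of the minor. Any partition of a pure complex of rank $d$ into intervals topped by facets computes its $h$-vector, since counting faces by cardinality gives $f_{k-1}=\sum_{B'}\binom{d-|\RR(B')\setminus I|}{\,k-|\RR(B')\setminus I|\,}$, which is the invertible $f$-to-$h$ relation; hence the number of $B'$ with $|\RR(B')|=|I|+j$ is $h_j((M/I)|_B)$. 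This is what ``reinterpretation of Lemma \ref{lem:coveredAtoms}'' amounts to, and your final remark about replacing cardinalities by ranks when $\text{Int}_<(M)$ is ranked is indeed immediate.
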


\begin{proof} This is a reinterpretation of Lemma \ref{lem:coveredAtoms}. \end{proof}

In other words, the internal poset of pinned broken shellings keeps track of the decomposition of the $h$-vector as long as it is graded. More informally, the restriction set poset is somewhat similar to a divisibility poset (which we are looking for) and one may wonder if it may very well be. We recall that this idea is not entirely new for the classical theory of internal activity. In fact Dall showed that it works for several ordered matroids, which he dubs internally perfect. With this in mind we formulate the following conjecture, whose feasibility we explore computationally in the following sections. 

\noindent \textbf{Conjecture \ref{conj:friendly-brokenLine}.} \emph{Given a matroid $M$ there exists a pinned broken line shelling order $<$ such that the internal activity poset $\text{Int}_<(M)$ associated to the shelling is the divisibility poset of a pure multicomplex.}

Perhaps the largest piece of evidence for the conjecture is Dall's result \cite{DallInternallyPerfect2017}. For the rest of the section, we discuss some heuristics that explain why this conjecture is reasonable. 

The most complicated aspect of Stanley's conjecture is the purity of the multicomplex: since matroids are shellable, their Stanley-Reisner ring is Cohen-Macaulay and the existence of a multicomplex was shown by Stanley \cite{S1977}. However, the proof uses regular sequence and lexicographic segments, thus the combinatorial structure of the resulting multicomplex has very little to do with that of the matroid. In Dall's theorem, the result follows essentially by the statement that the restriction sets form a Greedoid as a set system, e.g see Theorem \ref{thm:Dawson-Chari}.

Furthermore, notice that changing linear functionals in the broken line shellings takes parts of distinct line shelling posets and merges them in a way that is topologically coherent. The combinatorial data changes, but one might expect that if the change is not too drastic some of the combinatorial properties will be preserved. Thus, the main idea behind the conjecture is the expectation that one can slightly perturb the line shelling in a way that the new combinatorics preserves greedoid-like properties (like gradedness), but in such a way that the new partially ordered set is a divisibility poset.

The conjecture seems far fetched, and we expect it to be quite difficult. However, it is likely to lead to new results in special cases. In Section \ref{section:examples} we present some examples of ordered matroids that can be easily perturbed to obtain solutions to the conjecture. Our experiments show that, for a given matroid, there are many restriction set posets of pinned broken line shellings that do not appear as classical internal activity posets. 

Given a broken line shelling, each of the contracted elements in the recursion corresponds to a face of the matroid polytope. The broken line shelling is also a broken line shelling of each of the smaller pieces. Therefore we may hope to perform the inductive technique from the conjecture in \cite{KleeSam2015} by constructing broken line shellings whose restriction to each of these faces is well-behaved.

We also remark that any shelling order of $\I(M)$ or $P_M^*$ induces an orientation of the graph of $P_M$. The orientation is acyclic, and thus yields a partially ordered set structure on the bases. In the case of line shellings, this order is commonly called the Gale order of the bases and it is immediate that the shelling order is a linear extension of said poset. Thus the following definition is in place: 

\begin{defn}
Let $M$ be a matroid and let $<$ be a broken line shelling. Let $G_<(M)$ be the orientation of the graph of $P_M$ induced by $<$ and let $\text{Gale}_<(M)$ be the poset induced by $G_<(M)$. 
\end{defn}

The shelling order $<$ is a linear extension of $\text{Gale}_<(M)$ and the structures of these posets may hold the key to understanding differences between shelling orders. In particular, it would be of interest to find combinatorial conditions on the graph of $P_M$ that guarantee the existence of broken line shelling orders inducing them. We end with a series of questions that may lead to better understanding of the internal activity of a broken line shelling.

\begin{ques}\label{ques:acyclic-orientation}
Are there combinatorial conditions that guarantee that an acyclic orientation of the graph of $P_M$ is induced by a shelling order? How about broken line shelling orders? How about pinned broken line shelling orders?
\end{ques}

\section{Broken line shellings in SAGE}\label{section:implementation-SAGE}

The purpose of this section is to describe how to use the exploratory software we developed for broken line shellings. The code is freely available at \cite{brokenlinegithub}. Before giving the basics in Subsection \ref{subsection:basic-usage}, we make a few helpful definitions. In the interest of describing the explicit computations our software makes, we change terminology slightly. Definition \ref{def:pinned-broken-line-shelling} refers to a \textit{pinned broken line shelling}. All our broken line shellings will be pinned, and we will have specific \textit{witnesses} as in Definition \ref{def:broken-line-shelling-and-witnesses}, so we will call such a pinned broken line shelling with specific witnesses a \textit{sweep}. We give this definition precisely in Definition \ref{def:sweep}.

First, given a matroid polytope $P_M$ let $V$ be the vertex set of the polytope. For this section, we entirely focus on the polytope and so we will forget most of the other information. For the purposes of describing the algorithms and computer code, we change the notation slightly from above. In particular, we need to assign various linear functionals to vertices, but also evaluate linear functionals on vertices. Therefore we will use $\ell^T v_i$ for the evaluation and $\ell(v_i)$ for an assignment. We will explain this now. We define a hyperplane arrangement associated to $P_M \subset \mathbb{R}^E$. For each pair of vertices $\{v_i, v_j\} \in \binom{V}{2}$ we consider the hyperplane
\begin{equation*}
    H_{ij} = \{ \ell \in (\mathbb{R}^E)^* \, : \, \ell^T(v_i - v_j) = 0 \}
\end{equation*}
of all linear functionals $\ell \in (\mathbb{R}^E)^*$ which agree on the vertices $v_i$ and $v_j$. Let $\mathcal{H}$ be the hyperplane arrangement with the hyperplanes above. Here the natural pairing between $\mathbb{R}^E$ and $(\mathbb{R}^E)^*$ is expressed by $\ell^T v$, which is equivalent to evaluating the linear functional $\ell$ at the point $v \in \mathbb{R}^E$. The collection of all such $\binom{V}{2}$ hyperplanes stratifies $(\mathbb{R}^E)^*$ into open regions of functionals $\ell$ which assume distinct values $\ell^T v_i \neq \ell^T v_j$ on all vertices of $P_M$. Notice that this is a refinement of the normal fan. For instance, there are hyperplanes of functionals agreeing on vertices not connected by an edge in $P_M$. Such linear functionals induce an ordering of the vertices of the matroid polytope, given by $v_i < v_j$ if and only if $\ell^T v_i < \ell^T v_j$ as in Lemma \ref{lem:broken}. Given a linear functional $\ell$ and a vertex $v$, we also have a partition of the vertex set into the vertices $w$ such that $\ell^T w \leq \ell^T v$, and the vertices $u$ such that $\ell^T u > \ell^T v$. We say that $\ell_1$ and $\ell_2$ \textit{induce the same cut} of the matroid polytope at the vertex $v$ if these partitions are equal.
\begin{defn}\label{def:sweep}
    Consider a matroid polytope with vertex set $V$ of size $n$, with the matroid ground set denoted $E$. A \textit{sweep based at $v_1 \in V$} is a function $\ell:V \to (\mathbb{R}^E)^*$ from the set of vertices to the set of linear functionals, and a labelling of the vertices $(v_1,v_2,\dots,v_n)$ satisfying the requirements
\begin{enumerate}
    \item Every linear functional $\ell(v_k)$ is minimized at the vertex $v_1$.
    \item At each $k \in [n-1]$ the linear functional $\ell(v_k)$ induces the cut $\{v_1,\dots,v_k\} \cup \{v_{k+1},\dots,v_n\}$ of the matroid polytope at $v_k$.
    \item Each linear functional $\ell(v_k)$ takes distinct values on each of the vertices of the matroid polytope. This will be true for generic linear functionals.
\end{enumerate}
\end{defn}

There is a natural equivalence relation on all such sweeps $\ell:V \to (\mathbb{R}^E)^*$, $(v_1,\dots,v_n)$ satisfying the requirements above. If $\ell_1(v_k)$ and $\ell_2(v_k)$ lie in the same region of the hyperplane arrangement $\mathcal{H}$ for every $k \in [n], n = |V|$, we identify the two sweeps. Notice that there are different equivalence classes that induce the same shelling order. However, due to the inductive nature of this construction, we find it useful to keep track of the refined chambers in order to better keep track of the geometry.

You might first imagine simply enumerating all possible sweeps, and trying them all, and then sorting through all the restriction set posets. However, this is computationally infeasible. We have found that new and interesting posets can be created simply by pivoting at a few vertices, searching for a few new linear functionals at each pivot, and then sorting through the posets obtained in this way afterwards. We give examples in Section \ref{section:examples}. The software we demonstrate below in Subsection \ref{subsection:basic-usage} proceeds roughly as follows: We will construct the sequence of $\ell(v_1), \ell(v_2), \dots$ one step at a time. The default is to use a line shelling, but the user can specify desired pivots at certain locations, creating a broken line shelling.

\begin{enumerate}
    \item Given the matroid polytope and its vertices, we first calculate normal vectors for each of the hyperplanes of linear functionals which agree on some pair of vertices.
    \begin{align*}
        \text{normals } &= \{ v_i - v_j \}_{ij \in \binom{V}{2}}\\
        \text{hyperplanes } &= \bigcup_{ij \in \binom{V}{2}} \{ \ell \in (\mathbb{R}^E)^* \, : \, \ell^T(v_i - v_j) = 0 \}
    \end{align*}
    This hyperplane arrangement splits the dual space into regions of linear functionals which produce distinct orderings of the vertices. We fix an ordering of these normal vectors arbitrarily $(n_1,n_2,\dots,n_M)$, since later we will characterize an open region of linear functionals by the sequence of $+1$ and $-1$ created by the signs of the dot products against this ordered list of normal vectors.
    \item We begin by finding a linear functional which minimizes \texttt{vFav} $= v_1$. We sample uniformly from the sphere $\mathbb{S}^{|E|-1} \subset (\mathbb{R}^E)^*$ by independent normal distributions in each coordinate, normalizing to unit length. We do this until we find a linear functional which minimizes $v_1 =$ \texttt{vFav}. We also project this vector along the all ones vector, since the matroid polytope lies inside the affine hyperplane whose normal vector is a multiple of the all ones vector, shifted off the origin according to the rank of the matroid.
    \item We proceed to search for and record each new witness $\ell(v_{j+1})$. If the user has not specified a pivot at position $j$ then we simply copy $\ell(v_j)$ to $\ell(v_{j+1}) := \ell(v_j)$. However, if the user has specified a pivot at position $j$, then we produce new possible $\ell(v_{j+1})$ linear functionals by random sampling, each time checking if the conditions of Definition \ref{def:sweep} are met. We do this by taking a random convex combination
    \begin{equation*}
        \ell(v_{j+1}) = \sum_{i=2}^n c_i (v_i - v_1) \text{ such that } \sum c_i = 1
    \end{equation*}
    where the $c_i$ are chosen randomly from the interval $(0,1) \subset \mathbb{R}$ and then normalized to have $\sum c_i = 1$. This has the added benefit of \textit{weighting our search} to sample more densely from normal vectors which have more vertices in that general direction. This seems to produce more interesting posets in less random samples. We can imagine other ways to choose new functionals, and encourage users to experiment and change the code we provide.
    \item Also, the user can specify a parameter \texttt{w} which changes the way we produce a new $\ell(v_{j+1})$. Setting $w=0$ causes $\ell(v_{j+1})$ to be the random convex combination $\sum c_i (v_i - v_1)$. But with nonzero $w$, we form $\ell(v_{j+1})$ by first taking $w \cdot \ell(v_j)$ and then adding to it the random convex combination $\sum c_i (v_i - v_1)$. This has the effect of shifting the cone forward along $w \cdot \ell(v_j)$, producing a more localized sample as in Figure \ref{fig:shifted-cone}.
    \begin{figure}[H]
        \centering
        \includegraphics[width=0.4\textwidth]{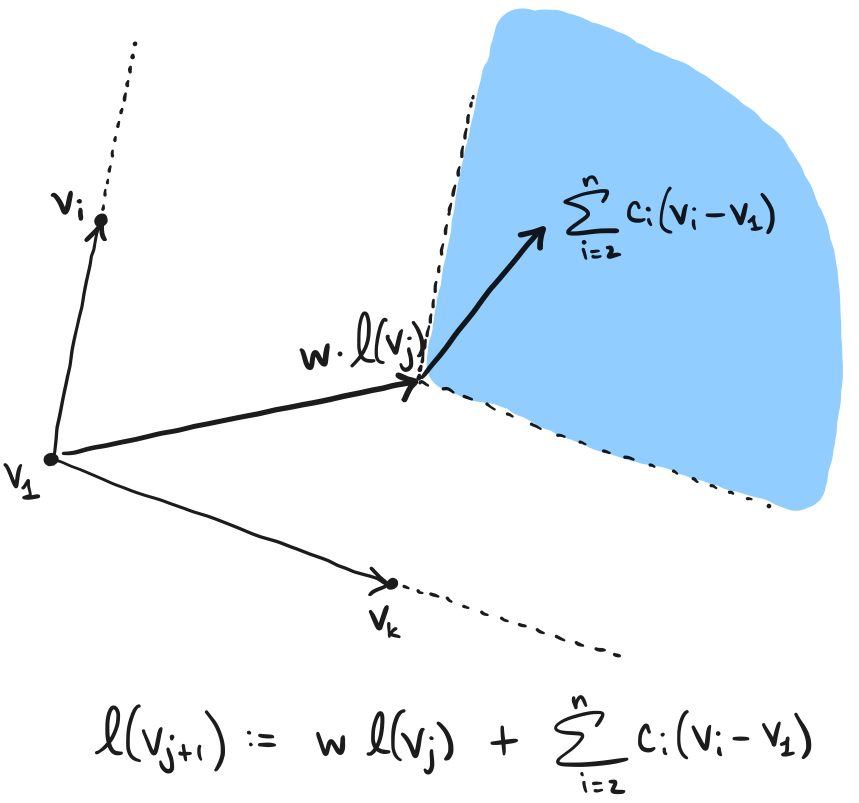}
        \caption{Using the parameter w}
        \label{fig:shifted-cone}
    \end{figure}
    The end result of using some $w > 0$ is that $\ell(v_{j+1})$ will be more likely to point in roughly the same direction as $\ell(v_j)$. As $w \to 0$ the dependence on the previous $\ell(v_j)$ disappears.
\end{enumerate}

\subsection{Basic usage}\label{subsection:basic-usage}

The main functions available to the user are the following, whose parameters we will explain below:
\begin{verbatim}
        result = search(V, vFav, pivots, limit, misses, w)
        display_results(result)
\end{verbatim}
As the user varies the parameters \texttt{pivots, limit, misses, w} and runs the search many times with different parameter choices, it is beneficial to use the commands
\begin{verbatim}
        result = update_search(result, V, vFav, pivots, limit, misses, w)
        display_results(result)
\end{verbatim}
since this keeps any posets obtained from previous searches, and simply updates the list of distinct posets obtained. This allows much experimentation, with each new run of \texttt{update\_search} only potentially increasing your results, rather than losing previous posets obtained in some other sweep. The contents of \texttt{result} will always remember the exact sweep of linear functionals, and so no information is lost by changing parameters and experimenting.

The inputs to the function \texttt{search} are \texttt{V, vFav, pivots, limit, misses, w}, which we describe below. The inputs to the function \texttt{update\_search} are the same, except the first positional argument is a \texttt{result} of a previous \texttt{search}, and then the following positional arguments are those we describe now, which are the same for both \texttt{search} and \texttt{update\_search}.
\begin{enumerate}
    \item \texttt{V} is a list containing the vertices of the matroid polytope as tuples of zeros and ones, the characteristic vector $\chi_B$ of some basis $B \in \B$. For example, this can be obtained for many built-in matroids by running the following code in SAGE:
    \begin{verbatim}
    U24 = matroids.Uniform(2,4)
    P = U24.matroid_polytope()
    V = P.vertices_list() \end{verbatim}
    Then \texttt{V} could be used as input to \texttt{search}. Alternatively, you could directly type the following:
    \begin{verbatim}
    V = [[0, 0, 1, 1], [0, 1, 0, 1], [0, 1, 1, 0],
          [1, 0, 0, 1], [1, 0, 1, 0], [1, 1, 0, 0]]
    \end{verbatim}
    \item \texttt{vFav} is your favorite vertex, given as a list or tuple of coordinates. All the linear functionals computed will attain their minimum value on this vertex. Hence, your sweep of the matroid polytope will start at this vertex. For example:
    \begin{verbatim}
        vFav = [0,1,0,1]
    \end{verbatim}
    \item \texttt{pivots} is a list of the locations where you want to pivot linear functionals. The first place you can pivot is location \texttt{1}, for example
    \begin{verbatim}
        pivots = [1,3,4]
    \end{verbatim}
    would cause the program to generate new linear functionals $\ell(v_2), \ell(v_4)$ and $\ell(v_5)$, since we are pivoting \textit{after} vertices $1,3,4$. Elsewhere, the same linear functional is simply copied forward, so that $\ell(v_{j+1}) := \ell(v_j)$ for each $j \notin \texttt{pivots}$.
    \item \texttt{limit} is the maximum number of new $\ell(v_{j+1})$ to produce at each pivot. As the total number of sweeps produced may be as high as $\texttt{limit}^{|\texttt{pivots}|}$, it is advisable to set this relatively low as you begin searching for new posets. Otherwise the program will run for too long. For the same reason, it is even more important to keep $|\texttt{pivots}|$ small, at least to start. We recommend pivoting at one location to start, and also two pivots seems to run quite fast. Once you try three pivots, the program runs noticeably longer.
    \item \texttt{misses}: At each pivot we generate new $\ell(v_{j+1})$ randomly, but we must then check that they induce the same cut of the matroid polytope, and that they are in a distinct open region of the hyperplane arrangement minimizing \texttt{vFav}. Thus, it may be the case that we search for a long time before a new, valid $\ell(v_{j+1})$ is actually produced. The value of \texttt{misses} is the number of failures at which the user would like to give up on that pivot. Thus, the higher the value of \texttt{misses}, the longer it may search at each pivot location, and the longer the program may run.
    \item \texttt{w} is the weight given to $\ell(v_j)$ in producing the next $\ell(v_{j+1}) := w \cdot \ell(v_j) + \sum_{i=2}^n c_i (v_i - v_1)$. This gives a dependency on the previous linear functional, making any tilting of the hyperplane less extreme.
\end{enumerate}

Therefore, a typical run might look like
\begin{verbatim}
        load("shellings.sage")
        V = [(1,1,1,0,0,0),
            (1,1,0,1,0,0),
            (1,1,0,0,1,0),
            (1,1,0,0,0,1),
            (1,0,1,1,0,0),
            (1,0,1,0,1,0),
            (1,0,1,0,0,1),
            (1,0,0,1,1,0),
            (1,0,0,1,0,1),
            (0,1,1,1,0,0),
            (0,1,1,0,1,0),
            (0,1,1,0,0,1),
            (0,1,0,1,1,0),
            (0,1,0,1,0,1)]
        vFav = (1,1,1,0,0,0)
        pivots = [3,6,7]
        misses = 50
        limit = 3
        w = 5.0
        result = search(V, vFav, pivots, limit, misses, w)
        display_results(result)
\end{verbatim}
After the results come back, you may want to change parameters and experiment, but you don't want to lose the posets you have just found. Therefore you run the following code:
\begin{verbatim}
        pivots = [4,5]
        misses = 2000
        result = update_search(result, V, vFav, pivots, limit, misses, w)
        display_results(result)
\end{verbatim}
To keep results, continue using \texttt{update\_search} for further experiments.

\section{Examples}\label{section:examples}

In this section we briefly illustrate how broken line shellings can produce new posets, in particular posets which arise as the divisibility poset of a pure multicomplex, as in Stanley's conjecture. We demonstrate on two examples. First,

\subsection{A graphical matroid}

\begin{center}
    \includegraphics[width=0.35\textwidth]{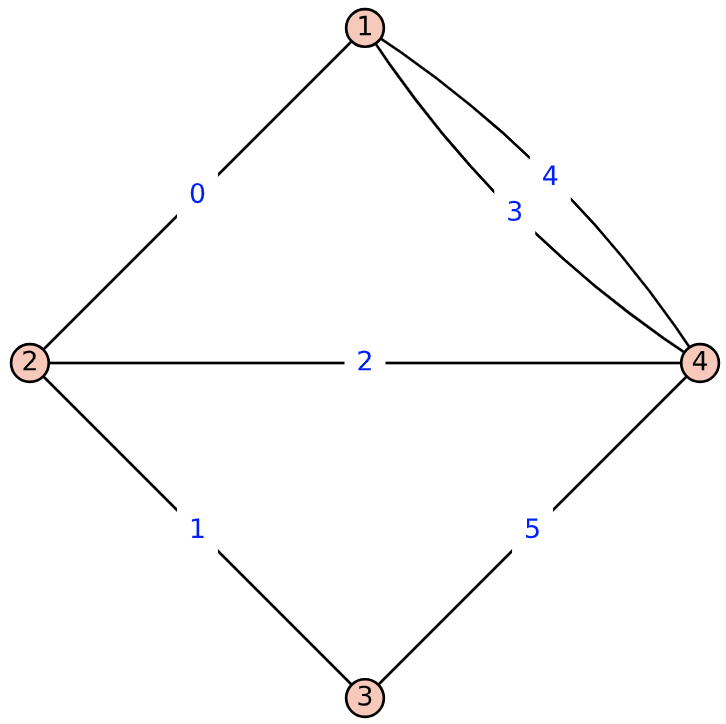}
\end{center}

The matroid whose bases are the 13 spanning trees of this graph has a matroid polytope which has 13 vertices living in $\mathbb{R}^6$. Since Python is zero-indexed, the edges are labelled $0,1,2,3,4,5$, which also becomes the ground set for the matroid. These labels will appear in the internally passive sets below (IP sets). In this example, using only one linear functional (without pivoting) results in a poset which is not the divisibility poset of a multicomplex. However, by using a broken line shelling order and pivoting linear functionals as we sweep through the matroid polytope, we can produce different internally passive sets, which produce many different posets.

The table in Figure \ref{fig:table_initial_bad_poset} shows the construction of IP sets by a sequence of linear functionals which \textit{does not pivot}. In particular the linear functional is $(1,2,3,4,5,6)$ where these are the values of the functional on the six coordinates. Since we use the same functional for the entire sweep of the matroid polytope, this corresponds to a line shelling. The resulting IP sets are given, as well as the order in which this sequence of linear functionals orders the vertices.
\begin{figure}[H]
    \centering
    \begin{tabular}{llll}
vertex & order swept & IP set & linear functional \\ \hline
$\left(1, 1, 1, 0, 0, 0\right)$ & $0$ & $\left[\right]$ & $\left(1.00,\,2.00,\,3.00,\,4.00,\,5.00,\,6.00\right)$ \\
$\left(1, 1, 0, 1, 0, 0\right)$ & $1$ & $\left[3\right]$ & $\left(1.00,\,2.00,\,3.00,\,4.00,\,5.00,\,6.00\right)$ \\
$\left(1, 1, 0, 0, 1, 0\right)$ & $2$ & $\left[4\right]$ & $\left(1.00,\,2.00,\,3.00,\,4.00,\,5.00,\,6.00\right)$ \\
$\left(1, 1, 0, 0, 0, 1\right)$ & $3$ & $\left[5\right]$ & $\left(1.00,\,2.00,\,3.00,\,4.00,\,5.00,\,6.00\right)$ \\
$\left(0, 1, 1, 1, 0, 0\right)$ & $4$ & $\left[2, 3\right]$ & $\left(1.00,\,2.00,\,3.00,\,4.00,\,5.00,\,6.00\right)$ \\
$\left(1, 0, 1, 0, 0, 1\right)$ & $5$ & $\left[2, 5\right]$ & $\left(1.00,\,2.00,\,3.00,\,4.00,\,5.00,\,6.00\right)$ \\
$\left(0, 1, 1, 0, 1, 0\right)$ & $6$ & $\left[2, 4\right]$ & $\left(1.00,\,2.00,\,3.00,\,4.00,\,5.00,\,6.00\right)$ \\
$\left(1, 0, 0, 1, 0, 1\right)$ & $7$ & $\left[3, 5\right]$ & $\left(1.00,\,2.00,\,3.00,\,4.00,\,5.00,\,6.00\right)$ \\
$\left(1, 0, 0, 0, 1, 1\right)$ & $8$ & $\left[4, 5\right]$ & $\left(1.00,\,2.00,\,3.00,\,4.00,\,5.00,\,6.00\right)$ \\
$\left(0, 1, 0, 1, 0, 1\right)$ & $9$ & $\left[1, 3, 5\right]$ & $\left(1.00,\,2.00,\,3.00,\,4.00,\,5.00,\,6.00\right)$ \\
$\left(0, 1, 0, 0, 1, 1\right)$ & $10$ & $\left[1, 4, 5\right]$ & $\left(1.00,\,2.00,\,3.00,\,4.00,\,5.00,\,6.00\right)$ \\
$\left(0, 0, 1, 1, 0, 1\right)$ & $11$ & $\left[2, 3, 5\right]$ & $\left(1.00,\,2.00,\,3.00,\,4.00,\,5.00,\,6.00\right)$ \\
$\left(0, 0, 1, 0, 1, 1\right)$ & $12$ & $\left[2, 4, 5\right]$ & $\left(1.00,\,2.00,\,3.00,\,4.00,\,5.00,\,6.00\right)$ \\
\end{tabular}
    \caption{Line shelling for the graphical matroid}
    \label{fig:table_initial_bad_poset}
\end{figure}

The poset created from these IP sets is displayed in Figure \ref{fig:initial_bad_poset}, where the nodes are labelled according to the \textit{order swept} column in the table. The nodes labelled $11$ and $12$ cause problems when trying to label this poset with monomials respecting divisibility.
\begin{figure}[H]
    \centering
    \includegraphics[width=0.35\textwidth]{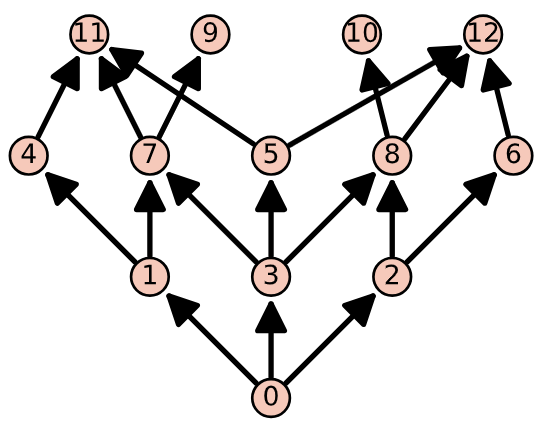}
    \caption{Poset obtained by line shelling}
    \label{fig:initial_bad_poset}
\end{figure}

The table in Figure \ref{fig:table_with_pivoting_better_poset} shows the construction of IP sets by a sequence of pivoting linear functionals, a broken line shelling. By pivoting as we sweep through the matroid polytope, we can produce many new posets. Here, we pivot after the first vertex and after the second (labelled 0 and 1), for a total of three different linear functionals. At each pivot, we check all conditions in Definition \ref{def:sweep}.

\begin{figure}[H]
    \centering
    \begin{tabular}{llll}
vertex & order swept & IP set & linear functional \\ \hline
$\left(1, 1, 1, 0, 0, 0\right)$ & $0$ & $\left[\right]$ & $\left(1.00,\,2.00,\,3.00,\,4.00,\,5.00,\,6.00\right)$ \\
$\left(1, 0, 1, 0, 0, 1\right)$ & $1$ & $\left[5\right]$ & $\left(-6.54,\,4.96,\,4.39,\,6.94,\,6.53,\,5.05\right)$ \\
$\left(1, 1, 0, 0, 0, 1\right)$ & $2$ & $\left[1, 5\right]$ & $\left(-7.78,\,5.30,\,4.18,\,7.94,\,8.00,\,5.91\right)$ \\
$\left(1, 1, 0, 1, 0, 0\right)$ & $3$ & $\left[3\right]$ & $\left(-7.78,\,5.30,\,4.18,\,7.94,\,8.00,\,5.91\right)$ \\
$\left(1, 1, 0, 0, 1, 0\right)$ & $4$ & $\left[4\right]$ & $\left(-7.78,\,5.30,\,4.18,\,7.94,\,8.00,\,5.91\right)$ \\
$\left(1, 0, 0, 1, 0, 1\right)$ & $5$ & $\left[3, 5\right]$ & $\left(-7.78,\,5.30,\,4.18,\,7.94,\,8.00,\,5.91\right)$ \\
$\left(1, 0, 0, 0, 1, 1\right)$ & $6$ & $\left[4, 5\right]$ & $\left(-7.78,\,5.30,\,4.18,\,7.94,\,8.00,\,5.91\right)$ \\
$\left(0, 1, 1, 1, 0, 0\right)$ & $7$ & $\left[2, 3\right]$ & $\left(-7.78,\,5.30,\,4.18,\,7.94,\,8.00,\,5.91\right)$ \\
$\left(0, 1, 1, 0, 1, 0\right)$ & $8$ & $\left[2, 4\right]$ & $\left(-7.78,\,5.30,\,4.18,\,7.94,\,8.00,\,5.91\right)$ \\
$\left(0, 0, 1, 1, 0, 1\right)$ & $9$ & $\left[2, 3, 5\right]$ & $\left(-7.78,\,5.30,\,4.18,\,7.94,\,8.00,\,5.91\right)$ \\
$\left(0, 0, 1, 0, 1, 1\right)$ & $10$ & $\left[2, 4, 5\right]$ & $\left(-7.78,\,5.30,\,4.18,\,7.94,\,8.00,\,5.91\right)$ \\
$\left(0, 1, 0, 1, 0, 1\right)$ & $11$ & $\left[1, 3, 5\right]$ & $\left(-7.78,\,5.30,\,4.18,\,7.94,\,8.00,\,5.91\right)$ \\
$\left(0, 1, 0, 0, 1, 1\right)$ & $12$ & $\left[1, 4, 5\right]$ & $\left(-7.78,\,5.30,\,4.18,\,7.94,\,8.00,\,5.91\right)$ \\
\end{tabular}
    \caption{Broken line shelling for graphical matroid}
    \label{fig:table_with_pivoting_better_poset}
\end{figure}

The poset created from these IP sets is displayed in the following Figure \ref{fig:better_poset_with_pivoting}, where the nodes are labelled according to the \textit{order swept} column in the table above. As you can see, there is no trouble labelling this poset with monomials so that the poset structure corresponds to divisibility.
\begin{figure}[H]
    \centering
    \includegraphics[width=0.35\textwidth]{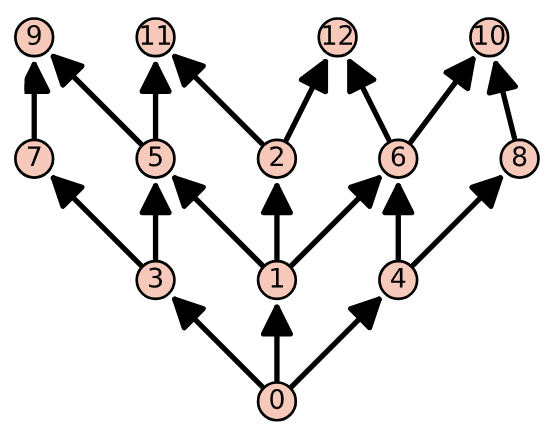}
    \caption{Poset obtained by broken line shelling}
    \label{fig:better_poset_with_pivoting}
\end{figure}

Of course, using a broken line shelling order does not guarantee that the poset you get is any better. Here in Figure \ref{fig:table_bad_poset_despite_pivoting}, for example, is another sequence of linear functionals which give a \textit{bad poset}, isomorphic to the one from before.

\begin{figure}[H]
    \centering
    \begin{tabular}{llll}
vertex & order swept & IP set & linear functional \\ \hline
$\left(1, 1, 1, 0, 0, 0\right)$ & $0$ & $\left[\right]$ & $\left(1.00,\,2.00,\,3.00,\,4.00,\,5.00,\,6.00\right)$ \\
$\left(1, 1, 0, 0, 0, 1\right)$ & $1$ & $\left[5\right]$ & $\left(-0.243,\,2.63,\,5.57,\,11.7,\,11.6,\,6.10\right)$ \\
$\left(1, 1, 0, 0, 1, 0\right)$ & $2$ & $\left[4\right]$ & $\left(1.86,\,-7.10,\,3.30,\,12.7,\,9.05,\,4.34\right)$ \\
$\left(0, 1, 1, 0, 1, 0\right)$ & $3$ & $\left[2, 4\right]$ & $\left(1.86,\,-7.10,\,3.30,\,12.7,\,9.05,\,4.34\right)$ \\
$\left(0, 1, 0, 0, 1, 1\right)$ & $4$ & $\left[4, 5\right]$ & $\left(1.86,\,-7.10,\,3.30,\,12.7,\,9.05,\,4.34\right)$ \\
$\left(1, 1, 0, 1, 0, 0\right)$ & $5$ & $\left[3\right]$ & $\left(1.86,\,-7.10,\,3.30,\,12.7,\,9.05,\,4.34\right)$ \\
$\left(0, 1, 1, 1, 0, 0\right)$ & $6$ & $\left[2, 3\right]$ & $\left(1.86,\,-7.10,\,3.30,\,12.7,\,9.05,\,4.34\right)$ \\
$\left(1, 0, 1, 0, 0, 1\right)$ & $7$ & $\left[2, 5\right]$ & $\left(1.86,\,-7.10,\,3.30,\,12.7,\,9.05,\,4.34\right)$ \\
$\left(0, 1, 0, 1, 0, 1\right)$ & $8$ & $\left[3, 5\right]$ & $\left(1.86,\,-7.10,\,3.30,\,12.7,\,9.05,\,4.34\right)$ \\
$\left(1, 0, 0, 0, 1, 1\right)$ & $9$ & $\left[0, 4, 5\right]$ & $\left(1.86,\,-7.10,\,3.30,\,12.7,\,9.05,\,4.34\right)$ \\
$\left(0, 0, 1, 0, 1, 1\right)$ & $10$ & $\left[2, 4, 5\right]$ & $\left(1.86,\,-7.10,\,3.30,\,12.7,\,9.05,\,4.34\right)$ \\
$\left(1, 0, 0, 1, 0, 1\right)$ & $11$ & $\left[0, 3, 5\right]$ & $\left(1.86,\,-7.10,\,3.30,\,12.7,\,9.05,\,4.34\right)$ \\
$\left(0, 0, 1, 1, 0, 1\right)$ & $12$ & $\left[2, 3, 5\right]$ & $\left(1.86,\,-7.10,\,3.30,\,12.7,\,9.05,\,4.34\right)$ \\
\end{tabular}
    \caption{Another broken line shelling with worse properties}
    \label{fig:table_bad_poset_despite_pivoting}
\end{figure}

\begin{figure}[H]
    \centering
    \includegraphics[width=0.35\textwidth]{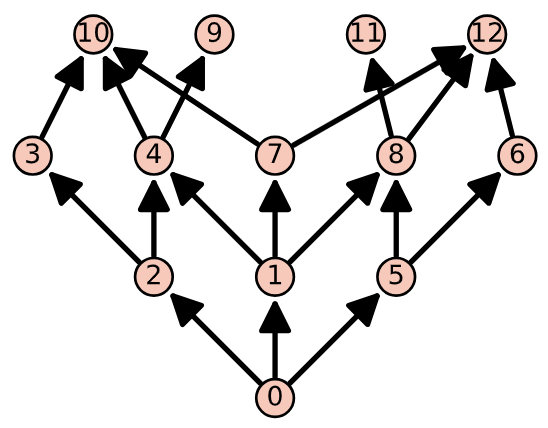}
    \caption{Poset using the broken line shelling of Figure \ref{fig:table_bad_poset_despite_pivoting}}
    \label{fig:bad_poset_despite_pivoting}
\end{figure}

The poset created from these pivots is displayed in Figure \ref{fig:bad_poset_despite_pivoting}. This concludes the example. As you can see, broken line shellings allow the construction of more posets with potentially better properties.

\subsection{The Catalan matroid}

Consider the rank three Catalan matroid. Its matroid polytope has $14$ vertices. We can sweep through the polytope with one linear functional, corresponding to a line shelling. This produces the data in Figure \ref{fig:table_bad_catalan}, which gives the poset in Figure \ref{fig:bad_poset_catalan}.

\begin{figure}[H]
    \centering
    \begin{tabular}{llll}
vertex & order swept & IP set & linear functional \\ \hline
$\left(1, 1, 1, 0, 0, 0\right)$ & $0$ & $\left[\right]$ & $\left(1.00,\,2.00,\,3.00,\,4.00,\,5.00,\,6.00\right)$ \\
$\left(1, 1, 0, 1, 0, 0\right)$ & $1$ & $\left[3\right]$ & $\left(1.00,\,2.00,\,3.00,\,4.00,\,5.00,\,6.00\right)$ \\
$\left(1, 1, 0, 0, 1, 0\right)$ & $2$ & $\left[4\right]$ & $\left(1.00,\,2.00,\,3.00,\,4.00,\,5.00,\,6.00\right)$ \\
$\left(1, 0, 1, 1, 0, 0\right)$ & $3$ & $\left[2, 3\right]$ & $\left(1.00,\,2.00,\,3.00,\,4.00,\,5.00,\,6.00\right)$ \\
$\left(1, 1, 0, 0, 0, 1\right)$ & $4$ & $\left[5\right]$ & $\left(1.00,\,2.00,\,3.00,\,4.00,\,5.00,\,6.00\right)$ \\
$\left(1, 0, 1, 0, 1, 0\right)$ & $5$ & $\left[2, 4\right]$ & $\left(1.00,\,2.00,\,3.00,\,4.00,\,5.00,\,6.00\right)$ \\
$\left(0, 1, 1, 1, 0, 0\right)$ & $6$ & $\left[1, 2, 3\right]$ & $\left(1.00,\,2.00,\,3.00,\,4.00,\,5.00,\,6.00\right)$ \\
$\left(1, 0, 1, 0, 0, 1\right)$ & $7$ & $\left[2, 5\right]$ & $\left(1.00,\,2.00,\,3.00,\,4.00,\,5.00,\,6.00\right)$ \\
$\left(1, 0, 0, 1, 1, 0\right)$ & $8$ & $\left[3, 4\right]$ & $\left(1.00,\,2.00,\,3.00,\,4.00,\,5.00,\,6.00\right)$ \\
$\left(0, 1, 1, 0, 1, 0\right)$ & $9$ & $\left[1, 2, 4\right]$ & $\left(1.00,\,2.00,\,3.00,\,4.00,\,5.00,\,6.00\right)$ \\
$\left(1, 0, 0, 1, 0, 1\right)$ & $10$ & $\left[3, 5\right]$ & $\left(1.00,\,2.00,\,3.00,\,4.00,\,5.00,\,6.00\right)$ \\
$\left(0, 1, 1, 0, 0, 1\right)$ & $11$ & $\left[1, 2, 5\right]$ & $\left(1.00,\,2.00,\,3.00,\,4.00,\,5.00,\,6.00\right)$ \\
$\left(0, 1, 0, 1, 1, 0\right)$ & $12$ & $\left[1, 3, 4\right]$ & $\left(1.00,\,2.00,\,3.00,\,4.00,\,5.00,\,6.00\right)$ \\
$\left(0, 1, 0, 1, 0, 1\right)$ & $13$ & $\left[1, 3, 5\right]$ & $\left(1.00,\,2.00,\,3.00,\,4.00,\,5.00,\,6.00\right)$ \\
\end{tabular}
    \caption{Line shelling for Catalan matroid}
    \label{fig:table_bad_catalan}
\end{figure}

\begin{figure}[H]
    \centering
    \includegraphics[width=0.35\textwidth]{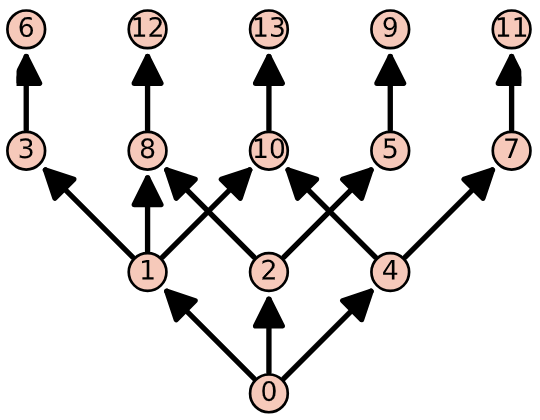}
    \caption{Poset from line shelling for Catalan matroid}
    \label{fig:bad_poset_catalan}
\end{figure}

Instead, if we pivot 4 times, which means we sweep our matroid polytope using 5 different linear functionals, we can create the data in Figure \ref{fig:table_catalan_good_poset}. This produces the poset in Figure \ref{fig:poset_catalan_good}, which is the divisibility poset of a pure multicomplex.

\begin{figure}[H]
    \centering
    \begin{tabular}{llll}
vertex & order swept & IP set & linear functional \\ \hline
$\left(1, 1, 1, 0, 0, 0\right)$ & $0$ & $\left[\right]$ & $\left(1.00,\,2.00,\,3.00,\,4.00,\,5.00,\,6.00\right)$ \\
$\left(1, 0, 1, 1, 0, 0\right)$ & $1$ & $\left[3\right]$ & $\left(-10.7,\,-5.84,\,-14.9,\,2.67,\,18.9,\,17.5\right)$ \\
$\left(0, 1, 1, 1, 0, 0\right)$ & $2$ & $\left[1, 3\right]$ & $\left(0.573,\,3.24,\,-8.68,\,6.11,\,10.6,\,12.3\right)$ \\
$\left(1, 1, 0, 1, 0, 0\right)$ & $3$ & $\left[0, 1, 3\right]$ & $\left(1.32,\,1.56,\,-0.157,\,4.38,\,11.0,\,6.36\right)$ \\
$\left(0, 1, 1, 0, 0, 1\right)$ & $4$ & $\left[5\right]$ & $\left(1.81,\,0.880,\,-0.449,\,4.08,\,10.9,\,6.87\right)$ \\
$\left(1, 0, 1, 0, 0, 1\right)$ & $5$ & $\left[0, 5\right]$ & $\left(1.81,\,0.880,\,-0.449,\,4.08,\,10.9,\,6.87\right)$ \\
$\left(1, 1, 0, 0, 0, 1\right)$ & $6$ & $\left[0, 1, 5\right]$ & $\left(1.81,\,0.880,\,-0.449,\,4.08,\,10.9,\,6.87\right)$ \\
$\left(0, 1, 1, 0, 1, 0\right)$ & $7$ & $\left[4\right]$ & $\left(1.81,\,0.880,\,-0.449,\,4.08,\,10.9,\,6.87\right)$ \\
$\left(0, 1, 0, 1, 0, 1\right)$ & $8$ & $\left[3, 5\right]$ & $\left(1.81,\,0.880,\,-0.449,\,4.08,\,10.9,\,6.87\right)$ \\
$\left(1, 0, 1, 0, 1, 0\right)$ & $9$ & $\left[0, 4\right]$ & $\left(1.81,\,0.880,\,-0.449,\,4.08,\,10.9,\,6.87\right)$ \\
$\left(1, 0, 0, 1, 0, 1\right)$ & $10$ & $\left[0, 3, 5\right]$ & $\left(1.81,\,0.880,\,-0.449,\,4.08,\,10.9,\,6.87\right)$ \\
$\left(1, 1, 0, 0, 1, 0\right)$ & $11$ & $\left[0, 1, 4\right]$ & $\left(1.81,\,0.880,\,-0.449,\,4.08,\,10.9,\,6.87\right)$ \\
$\left(0, 1, 0, 1, 1, 0\right)$ & $12$ & $\left[3, 4\right]$ & $\left(1.81,\,0.880,\,-0.449,\,4.08,\,10.9,\,6.87\right)$ \\
$\left(1, 0, 0, 1, 1, 0\right)$ & $13$ & $\left[0, 3, 4\right]$ & $\left(1.81,\,0.880,\,-0.449,\,4.08,\,10.9,\,6.87\right)$ \\
\end{tabular}
    \caption{Broken line shelling for Catalan matroid}
    \label{fig:table_catalan_good_poset}
\end{figure}

\begin{figure}[H]
    \centering
    \includegraphics[width=0.35\textwidth]{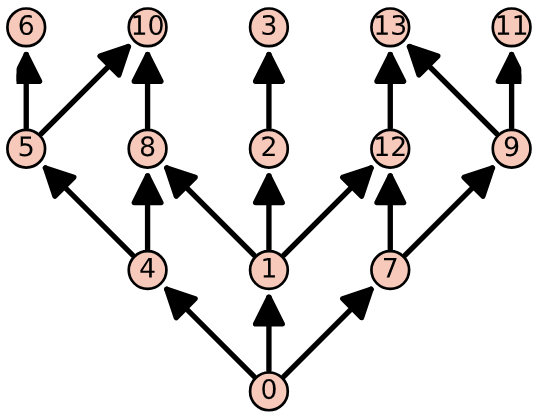}
    \caption{Poset for Catalan broken line shelling, divisibility poset}
    \label{fig:poset_catalan_good}
\end{figure}

We conclude this section with two collections of posets obtained from some brief experimentation with broken line shellings on the Catalan matroid and the graphic matroid on $K_4$, displayed in Figures \ref{fig:catalan-poset-zoo} and \ref{fig:graphic-poset-zoo}. We invite the reader to check, for example, which posets satisfy the greedoid property of Dawson or the lattice property of Las Vergnas, i.e. an extension of Theorem \ref{thm:Dawson-Chari}. Notice that posets $1,2,5$ and $6$ in the $K_4$ example immediately violate the purity and fail both conditions.

\begin{figure}[!htb]
    \centering
    \includegraphics[width=0.2\textwidth]{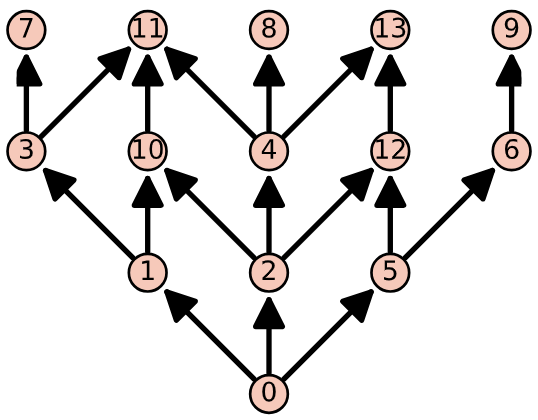} \hspace{1cm} \includegraphics[width=0.2\textwidth]{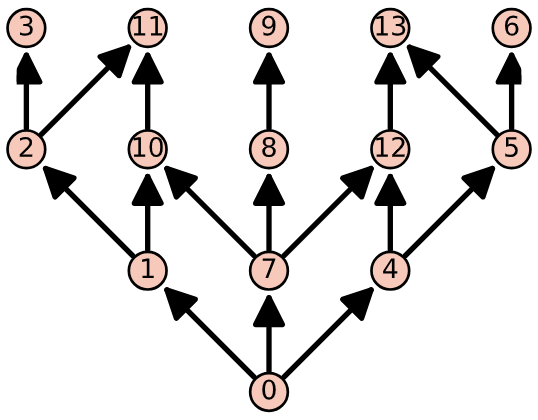} \hspace{1cm} \includegraphics[width=0.2\textwidth]{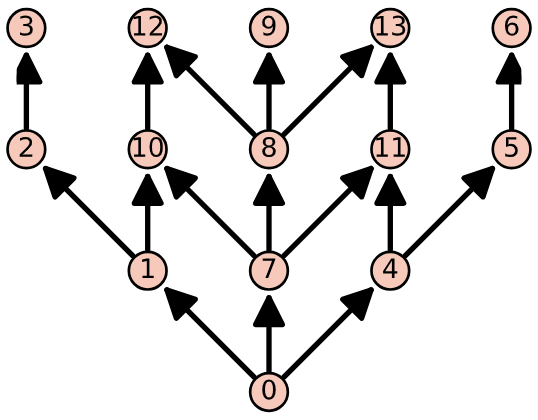}\\ \includegraphics[width=0.2\textwidth]{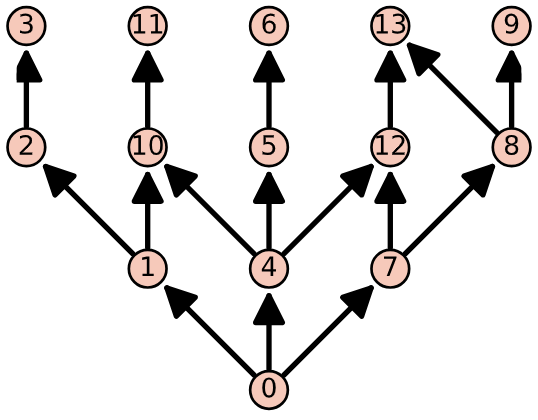} \hspace{1cm} \includegraphics[width=0.2\textwidth]{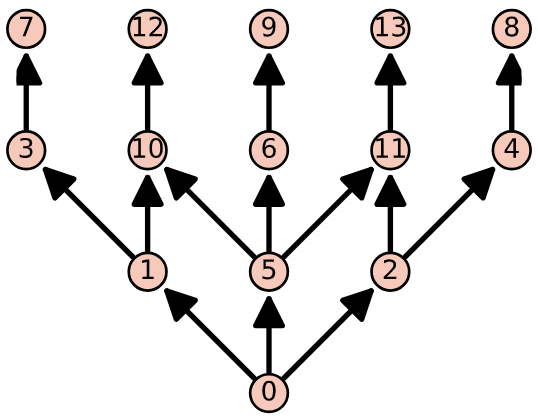} \hspace{1cm} \includegraphics[width=0.2\textwidth]{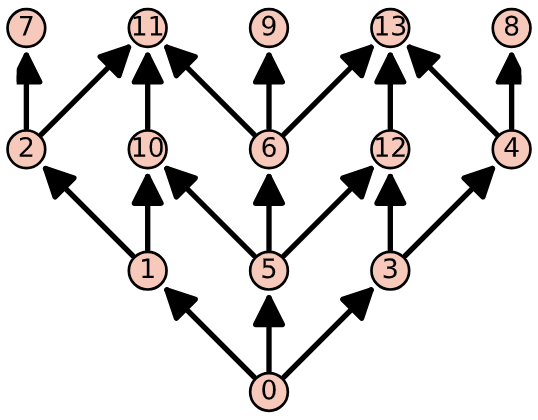}
    \caption{Non-isomorphic animals from the Catalan matroid poset zoo}
    \label{fig:catalan-poset-zoo}
\end{figure}

\begin{figure}[!htb]
    \centering
    \includegraphics[width=0.24\textwidth]{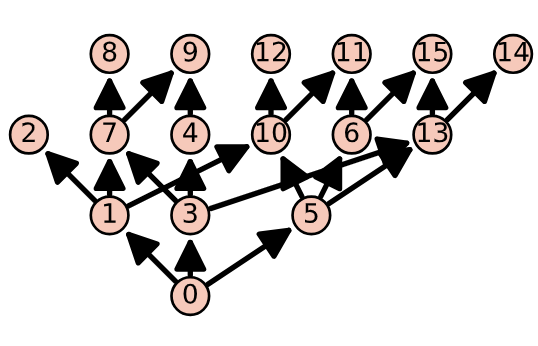} \includegraphics[width=0.24\textwidth]{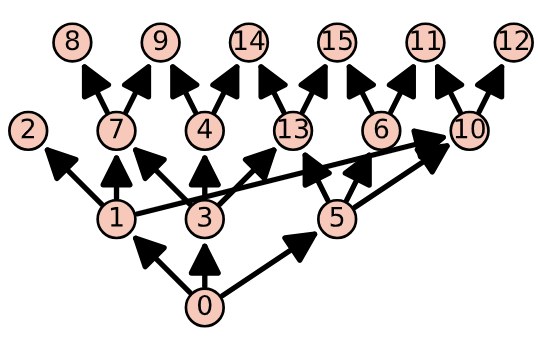} \includegraphics[width=0.24\textwidth]{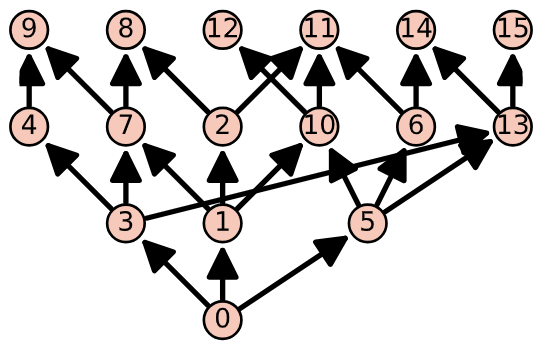}\\ \includegraphics[width=0.24\textwidth]{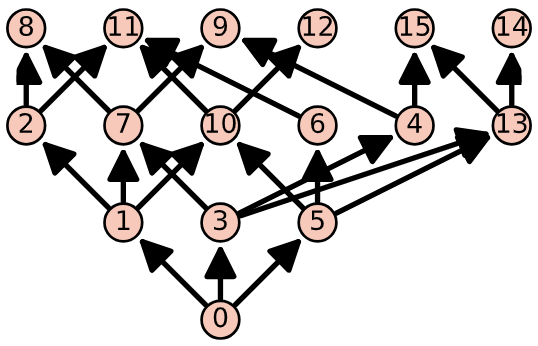} \hspace{1cm} \includegraphics[width=0.24\textwidth]{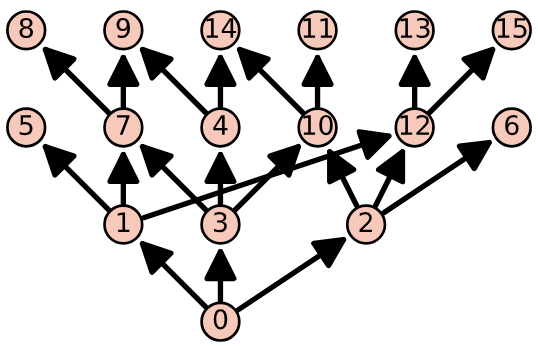}\\ \includegraphics[width=0.24\textwidth]{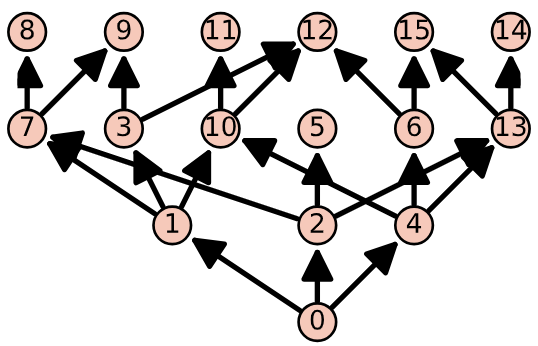} \hspace{1cm}
    \includegraphics[width=0.24\textwidth]{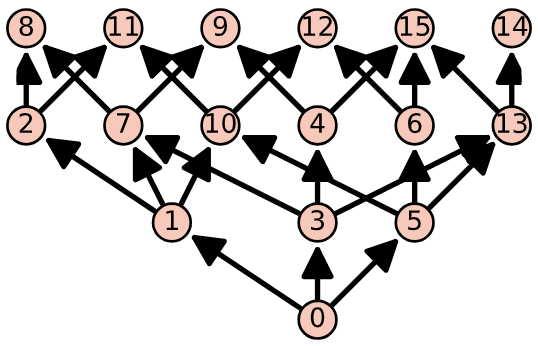}
    \caption{Some shelling order posets for the graphic matroid of the complete graph $K_4$.}
    \label{fig:graphic-poset-zoo}
\end{figure}

\section{Questions and future directions}\label{section:questions}
To finalize, we point out a few directions for potential future research (in addition to the Questions \ref{ques:classify-reversible}, \ref{ques:construct-weakly-geometric}, \ref{ques:dual-matroid-extendably-shellable}, \ref{ques:boundaries-crosspolytopes}, \ref{ques:acyclic-orientation} in the text above). We believe the new results in this article, together with the software package, open up many new directions.

First of all, we focused on (pinned) broken line shelling orders. But, there are many other shelling orders, though we cannot quantify this yet. For example, it would be interesting to investigate the difference between geometric and combinatorial shelling orders. Since the most important feature seems to be the restriction set poset, consider the following. For a matroid $M$ let $sg(M)$ be the number of distinct restriction set posets for geometric shelling orders, and let $sc(M)$ be the number of distinct restriction set posets for any shelling order. This ratio describes what the geometry can tell us about the combinatorics. First, the difference between geometric and combinatorial properties is interesting in many situations, and this instance is no exception. Second, it may help us to assess the difference between Simon's conjecture and Conjecture \ref{conj:hypersimplex-extendably-shellable}.

\begin{ques}
Are there interesting sequences of matroids $\{M_n\}_{n=1}^\infty$ for which we can understand the fraction
\begin{equation*}
    \frac{sg(M_n)}{sc(M_n)}, \text{ as } n \to \infty?
\end{equation*}
\end{ques}

Suggestions would include uniform matroids or Catalan matroids. These both have other growth patterns which are interesting.

Since the internally passive posets of line shellings are highly structured, we would like to find conditions on broken line shellings so that their restriction set posets share these features. For example, what conditions on broken line shellings might guarantee the restriction set poset has a greedoid structure? The same question applies for the lattice property of Las Vergnas (see Figure \ref{fig:graphic-poset-zoo} for several broken line shelling restriction set posets violating these). Inspired by the various ``flip'' techniques from topological combinatorics, we propose to identify the effects of removing or adding a single pivot. One would hope for a simple set of rules which governs how the restriction set posets change due to certain flips. We hope the software will be instrumental in addressing these questions.

\begin{ques}
Assume there is a broken line shelling whose restriction set poset is graded. What conditions guarantee the graded-ness is preserved upon removal/addition of a single pivot?
\end{ques}

\begin{ques}
Assume there is a broken line shelling whose restriction set poset is a greedoid. What conditions guarantee the greedoid property is preserved upon removal/addition of a single pivot?
\end{ques}

\begin{ques}
Assume there is a broken line shelling whose restriction set poset becomes a lattice after adding an artificial maximum. What conditions guarantee this lattice property is preserved upon removal/addition of a single pivot?
\end{ques}

We believe that these are key steps in understanding Stanley's conjecture. To elaborate, the restriction set posets coming from line shellings are quite similar to face posets of pure multicomplexes. Since the divisibility conditions can be checked locally on a poset, one would hope that there exist rules like those suggested in the questions above, which would instruct the alteration of a line shelling in order to produce a poset satisfying Stanley's conjecture. In particular, we notice that Dall established some precise combinatorial conditions for such posets to be multicomplexes in the case of line shellings. It is reasonable to expect that such conditions can be rephrased in terms of the shelling order instead of the underlying ground set order. We hope the geometry behind the algorithm for constructing broken line shelling posets may provide a key source of insight into what these conditions might look like.

Consider the refinement of Stanley's conjecture presented in \cite{KleeSam2015}. Besides Dall's construction of internally perfect matroids, there are other examples for which this conjecture holds. For instance, the class of Schubert matroids and also the class of positroids were shown to satisfy the conjecture in \cite{Samper} and \cite{OhThesis}, respectively. One might hope to reprove these results using the geometric techniques presented here, and in the process learn further insights that could help to extend the results to other classes of matroids.

Next, recall that line shellings of general polytopes introduce combinatorial data for computing the cd-index of a polytope as done by Lee in \cite{LeeCD}. This is a polynomial in the non-commutative variables $c$ and $d$ that keeps track of flags of faces of the polytope in a condensed way. In light of Theorem \ref{thdm:shell}, we believe that there may be an interesting connection between the cd-index coefficients of the dual matroid polytope and the $h$-vector of the independence complex of the matroid.

\begin{ques}
Let $M$ be a matroid. Is there a relationship between the $cd$-index of the polytope $P_M^*$ and the $h$-vector of the independence complex $\I(M)$? Is it possible to recover one of the two invariants from the other one? 
\end{ques}

As mentioned before, the internal activity of a line shelling recovers an evaluation of the Tutte polynomial. To recover the entire polynomial one has to mix internal and external activity simultaneously. A natural question that arises is if the geometric methods of this article can be extended to recover the entire Tutte polynomial as a shellability invariant.

Very recent work of Ardila, Denham, and Huh \cite{ArdilaDenhamHuh} proved a variety of old conjectures for the $h$-vector of matroids and broken circuit complexes. It is tempting to study some of their results through the light of new geometric methods. As a first step, one could try to define a general version of the broken circuit complex depending on a (pinned) broken line shelling, and the geometric nature of our arguments seems to blend well with the spirit and suggestions of their work.

Finally, and perhaps more speculatively, we recall that geometric lattices can also be defined in terms of shelling orders by constructing EL-labellings based on permutations of the atoms of the lattice \cite{Hersh}. A geometric lattice is purely combinatorial, but its geometric analog is the Bergman fan \cite{ArdilaKlivans2006BergmanComplexMatroidPhylogeneticTrees}. One may wonder if, by using this connection, we might find a large class of shellings parametrized using the polyhedral structure of this fan.

\noindent{\bf Acknowledgements }
Both authors would like to especially thank the entire staff at MPI-MIS Leipzig for the great support they have provided during the complicated times of the crisis. Our ability to complete this work from home in the middle of the pandemic is in big part due to their effort to keep us well connected and with full access to the needed resources. We appreciate their effort. This project was born as the second author was a postdoctoral fellow at the University of Miami. He wants the thank the institution for providing and excellent working environment. He is also thankful to Richard Stanley for some interesting suggestions at the beginning of the project.

\bibliographystyle{plain}
\bibliography{references}

\end{document}